\numberwithin{equation}{subsection}
\newtheorem{theorem}[equation]{Theorem}
\newtheorem{lemma}[equation]{Lemma}
\newtheorem{proposition}[equation]{Proposition}
\newtheorem{corollary}[equation]{Corollary}
\newtheorem*{thm}{Theorem}
\theoremstyle{definition}
\newtheorem{defn}[equation]{Definition}
\theoremstyle{remark}
\newtheorem{rems}[equation]{Remark}
\newtheorem{example}[equation]{Example}
\newtheorem{para}[equation]{}
\DeclareMathOperator{\Aut}{\underline{Aut}}
\DeclareMathOperator{\End}{End}
\DeclareMathOperator{\Sym}{Sym}
\DeclareMathOperator{\Lie}{Lie}
\DeclareMathOperator{\Spec}{Spec}
\DeclareMathOperator{\gal}{gal}
\DeclareMathOperator{\SL}{SL}
\DeclareMathOperator{\Nm}{Nm}
\newcommand\g{\mathfrak g}
\newcommand\gl{\mathfrak {gl}}
\newcommand\GL{\mathrm {GL}}
\newcommand{\ncap}{\mathrm{cap}}
\newcommand{\rk}{\mathrm{rk}}
\newcommand\Q{\mathbb Q}
\newcommand\C{\mathbb C}
\newcommand\R{\mathbb R}
\newcommand\bP{\mathbb P}
\newcommand\bA{\mathbb A}
\newcommand\cO{\mathcal O}
\newcommand\cM{\mathcal M}
\newcommand\cW{\mathcal W}
\newcommand\cL{\mathcal L}
\newcommand\cN{\mathcal N}
\newcommand\cE{\mathcal E}
\newcommand\cH{\mathcal H}
\newcommand\fp{\mathfrak p}
\newcommand\yy{\boldsymbol{y}}
\newcommand\Z{\mathbb Z}
\newcommand{\tilO}{\widetilde{\Omega}}
\newcommand{\xO}{x_0}
\begin{document}

\title{Algebraic solutions of differential equations over $\mathbb{P}^{1}-\{0,\,1,\,\infty\}$}

\author {Yunqing Tang}

\address{Department of Mathematics, Harvard University}
\email{yqtang@math.harvard.edu}

\begin{abstract}
The Grothendieck--Katz $p$-curvature conjecture predicts that an arithmetic differential equation whose reduction modulo $p$ has vanishing $p$-curvatures for {\em almost all} $p,$ has finite monodromy. It is known that it suffices to prove the conjecture for differential equations on $\mathbb{P}^{1}-\{0,1,\infty\}.$ 
We prove a variant of this conjecture for $\mathbb{P}^{1}-\{0,1,\infty\},$ which asserts that if the equation satisfies a certain convergence condition for {\em all} $p,$ then its monodromy is trivial. For those $p$ for which the $p$-curvature makes sense, its vanishing implies our condition. We deduce from this a description of the differential Galois group of the equation in terms of $p$-curvatures and certain local monodromy groups.
We also prove similar variants of the $p$-curvature conjecture for the elliptic curve with $j$-invariant $1728$ minus its identity and for $\bP^1-\{\pm 1,\pm i,\infty\}$. 
\end{abstract}

\maketitle

\section{Introduction}

The Grothendieck--Katz $p$-curvature conjecture was originally raised
as a question on linear homogeneous systems of first-order differential
equations (see Conjecture (I) in \cite{K72}*{Introduction} for more details)

$$\frac{d\boldsymbol{y}}{dx}=A(x)\boldsymbol{y}.$$

Here $A(x)$ is a square matrix of rational functions of $x$ with coefficients in some number field $K$ and  $\boldsymbol{y}$ is a vector-valued function. For all but finitely many primes $\mathfrak{p}$ of $K$, it makes sense to reduce this system modulo $\mathfrak{p}$ and to define an invariant, the $p$-curvature, in terms of the resulting system. 
According to the conjecture, if \emph{almost all} (that is, all but finitely many) $p$-curvatures vanish, then the original system admits a full set of solutions in algebraic functions.

The conjecture generalizes to a smooth variety $X$ equipped with a vector bundle with an integrable connection $(M,\nabla)$ defined over some number field $K$. It 
is known that the general version of the conjecture reduces to the case when $X = \mathbb{P}_K^{1}-\{0,1,\infty\}$. (See  \cite{B01}*{2.4.1}, \cite{K82}*{Thm.~10.5}, and \cite{A05}*{7.1.4}).

In this paper, we prove a variant of the conjecture for $X = \mathbb{P}_K^{1}-\{0,1,\infty\}$ where the condition 
for almost all $\fp$ is replaced by a condition {\em for all $\fp$}. A slightly informal 
formulation of our main theorem is the following:

\begin{thm} (Theorem \ref{thm_str}) 
Let $(M, \nabla)$ a vector bundle with a connection over $X=\mathbb{P}_K^{1}-\{0,1,\infty\}$. 
If the $p$-curvature of $(M,\nabla)$ vanishes for all $\mathfrak p$, then $(M,\nabla)$ admits a full set of
rational solutions, that is, $M^{\nabla=0}$ generates $M$ as an $\mathcal{O}_{X}$-module.
\end{thm}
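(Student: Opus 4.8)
The plan is to prove that the hypothesis forces the monodromy representation $\rho\colon \pi_1(X^{\mathrm{an}}, x_0)\to \GL_r(\C)$ to be trivial, which is equivalent to $M^{\nabla=0}$ generating $M$. The base point $x_0\in X(K)$ must be chosen with care: one wants its Zariski closure in a smooth model to meet the boundary $\{0,1,\infty\}$ in as few residue characteristics as possible, and on $\bP^1-\{0,1,\infty\}$ the $S_3$-orbit $\{-1,2,1/2\}$ — the points over which the natural double cover is the elliptic curve with $j=1728$ — has good reduction away from $2$. This is why those points, and the companion curves $E_{j=1728}-\{O\}$ and $\bP^1-\{\pm1,\pm i,\infty\}$, will be central. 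Fix such an $x_0$, set $t=x-x_0$, and let $Y\in \GL_r(K[[t]])$ be the fundamental solution matrix with $Y(0)=I$.

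The first block of work is to extract from the convergence condition three facts: the singularities at $0,1,\infty$ are regular (an irregular singularity would push the generic $\fp$-adic radius of convergence below $1$ at every $\fp$, contradicting the hypothesis); the entries of $Y$ are $G$-functions (the Archimedean Cauchy estimate on the disc of radius $\mathrm{dist}(x_0,\{0,1,\infty\})$ gives $\overline{|a_n|}\le C^n$, while convergence on the full residue disc at each $\fp$ bounds the $\fp$-denominators geometrically in $n$); and at every $\fp$ the $\fp$-adic radius of convergence of $Y$ is as large as the geometry of the pair $(X,x_0)$ allows — the whole residue disc at every $\fp$ of good reduction. In short, $(M,\nabla)$ is a $G$-connection whose $\fp$-curvature vanishes at every $\fp$ where it is defined, with no bad primes to discard.

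The heart of the argument is to upgrade this to algebraicity of $Y$, i.e.\ finiteness of $\rho$. The mechanism I expect is a Chudnovsky--Andr\'e--Bost algebraicity criterion applied to the formal germ of the graph of $Y$ inside $\widehat{X}_{x_0}\times\bA^r$: at a prime of good reduction the germ is as $\fp$-analytic as geometry permits, so its $\fp$-adic "defect" is non-positive, and the fixed positive Archimedean radius then forces algebraicity there. The gain over the classical hypothesis is precisely that one wins at \emph{every} $\fp$ rather than all but finitely many — for an ODE on $\bP^1-\{0,1,\infty\}$ the available Archimedean radius is bounded, so the loss at even one bad prime cannot in general be compensated. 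The residual input sits at the primes of bad reduction of $(X,x_0)$ — for $x_0\in\{-1,2,1/2\}$, only $\fp\mid 2$ — and this is where I expect the genuine obstacle to lie: one must feed in information coming from the companion results on $E_{j=1728}-\{O\}$ and $\bP^1-\{\pm1,\pm i,\infty\}$ via the standard covering maps relating these curves, the arithmetic of the $j=1728$ curve (equivalently Chudnovsky's theorem on the algebraic independence of $\pi$ and $\Gamma(1/4)$, should a transcendence input be needed to rule out a transcendental $Y$) being exactly what these special configurations render accessible. The output is that $Y$ is algebraic, so $\rho$ has finite image.

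Finally, suppose $\rho$ has finite image of order $N$ and let $X'\to X$ be the connected finite \'etale trivializing cover, so that $\pi^*(M,\nabla)$ is the trivial connection; then $X'\to X$ itself satisfies the convergence condition at every $\fp$. But a nontrivial connected \'etale cover of $\bP^1-\{0,1,\infty\}$ — already a Kummer cover $x\mapsto x^m$ — has nonvanishing $\fp$-curvature, equivalently sub-maximal $\fp$-adic radius, at every $\fp$ dividing its degree, since some local exponent at $0$, $1$ or $\infty$ becomes a non-$\fp$-integral rational number. Running this over the finitely many $\fp\mid N$ forces $N=1$, so $\rho$ is trivial and $M^{\nabla=0}$ generates $M$. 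This last step is also the reason the conclusion is triviality of the monodromy rather than merely finiteness, which is the best the classical $p$-curvature conjecture would predict.
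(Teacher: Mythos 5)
Your high-level mechanism (archimedean uniformization radius versus $p$-adic convergence radii, fed into an arithmetic algebraicity criterion of Andr\'e--Bost type) is the right one, but there are two concrete gaps. First, the base point and the estimate that makes the criterion fire. The paper does not use the $j=1728$ orbit $\{-1,2,1/2\}$; it uses $x_0=\frac{1+\sqrt{3}i}{2}$, the $\lambda$-value of the $j=0$ curve, precisely because $x_0$ and $1-x_0$ are units at \emph{every} finite place (so there is no bad prime $\fp\mid 2$ at all, and no need to import the companion results on $E_{j=1728}$ or $\bP^1-\{\pm1,\pm i,\infty\}$ --- those are separate applications of the same method, not inputs), and because this curve has the smallest stable Faltings height, hence the largest archimedean radius. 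The crux you never confront is the explicit numerical inequality: the universal cover $\lambda\colon\cH\to\bP^1-\{0,1,\infty\}$ plus the Chowla--Selberg formula for $\Q(\sqrt{-3})$ give $R_\infty\geq 3\Gamma(1/3)^6/(2^{8/3}\pi^3)=5.632\cdots$, which must beat $\prod_p p^{1/(p(p-1))}=e^{0.7612\cdots}=2.14\cdots$ coming from the finite places. No transcendence statement (Chudnovsky on $\pi,\Gamma(1/4)$) enters anywhere; what is needed is this concrete lower bound, and your choice of base point both weakens it and manufactures a phantom obstacle at $2$.

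Second, your passage from finite to trivial monodromy does not work as stated. The claim that a nontrivial cover has ``sub-maximal $\fp$-adic radius at every $\fp$ dividing its degree'' is wrong as phrased (the relevant invariant is the ramification index at a cusp, not the degree), and even corrected it is exactly the ``prime by prime'' argument which the paper attributes to Andr\'e under a \emph{generic radius} hypothesis via \cite{BS}, and which the paper explicitly states it does not know how to run under the actual hypothesis, namely convergence on a disc around the single point $x_0$: transferring ``non-$p$-integral exponent at $0$'' into ``small radius at $x_0$'' for a general (non-Kummer) algebraic solution is the missing $p$-adic transfer theorem. The paper avoids this entirely: having obtained algebraicity from Andr\'e's criterion, it upgrades to \emph{rationality} (not merely triviality of monodromy) by applying the Bost--Chambert-Loir rationality criterion to an ad\'elic tube whose archimedean component is $\lambda(\tilO)$ for a fundamental domain $\tilO$ of $\Gamma(2)$, and verifying positivity of the resulting Arakelov degree by a capacity computation ($>0.184$). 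That global capacity argument is the genuinely different, and necessary, second half of the proof.
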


Let us explain the meaning of the condition of vanishing $p$-curvature at all primes $\mathfrak p$: at primes where $p$-curvature is either not defined or non-vanishing, we impose a condition on the $p$-adic radius of convergence of the horizontal sections of $(M,\nabla)$. When $(M,\nabla)$ has an integral model at a prime $\fp$ so that one can make sense of its reduction mod $p$, this convergence condition is {\em implied by} the vanishing of the $p$-curvature.

One can also extend the notion of vanishing $p$-curvature for all $\fp$ to vector bundles with connections over smooth algebraic curves equipped with a semistable model over $\cO_K$. 
However, the property of all $p$-curvature vanishing is not preserved under push-forward along finite maps from the curve in question to $\mathbb{P}^{1}-\{0,1,\infty\}$. Therefore, one cannot deduce from the above theorem that vanishing $p$-curvature for all $\fp$ implies trivial monodromy in the case of arbitrary algebraic curves. Nevertheless, when $X$ is an elliptic curve with $j$-invariant $1728$ minus its identity point, we prove:

\begin{thm}(Theorem \ref{thm_ec})
Let $X\subset \bA^2_{\Z}$ be the affine curve defined by $y^2=x(x-1)(x+1)$ and let $(M,\nabla)$ be a vector bundle with a connection over $X$. If the $p$-curvature of $(M,\nabla)$ vanishes for all $\mathfrak p$, then $(M,\nabla)$ is \'etale locally trivial. Namely, there exists a finite \'etale map $f:Y\rightarrow X$ such that $f^*(M,\nabla)$ is isomorphic to $(\cO_Y^{\mathrm{rk} M},d)$, where $d$ is the differential operator on regular functions.
\end{thm}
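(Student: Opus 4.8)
\noindent\textbf{Proof proposal for Theorem \ref{thm_ec}.}
The idea is to reduce the statement to Theorem \ref{thm_str} through a finite étale correspondence between $X$ and $\bP^1-\{0,1,\infty\}$. The point that makes this possible is that the hypothesis ``$p$-curvature vanishes for all $\fp$'' (including its $p$-adic convergence component at the remaining primes) is preserved not only under pullback but also under \emph{pushforward along finite étale morphisms}: for a finite étale $\pi$, derivations and the restricted $p$-th power operation lift uniquely, so the $p$-curvature of $\pi_*(N,\nabla)$ is the direct image of that of $(N,\nabla)$, and $\pi$ is a local isomorphism for the analytic structures governing the radii of convergence of horizontal sections. (It is precisely the pushforward along \emph{ramified} maps to $\bP^1$ that destroys ``for all $\fp$'', which is why the classical reduction of the conjecture to $\bP^1-\{0,1,\infty\}$ is not available in our setting.)

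Concretely, let $E\colon y^2=x(x-1)(x+1)$, so $X=E-O$ with $O$ the point at infinity; $E$ has $j$-invariant $1728$, complex multiplication by $\Z[i]$, and $E[2]=\{O,(0,0),(1,0),(-1,0)\}$. I consider
\[
X=E-O \;\xleftarrow{\;[2]\;}\; Y:=E-E[2] \;\xrightarrow{\;x^{2}\;}\; \bP^1-\{0,1,\infty\}.
\]
Here $[2]$ is finite of degree $4$ with $[2]^{-1}(O)=E[2]$, hence finite étale away from $2$; and $x^2$ factors as $E-E[2]\xrightarrow{x}\bP^1-\{0,\pm1,\infty\}\xrightarrow{t\mapsto t^2}\bP^1-\{0,1,\infty\}$, with the first factor étale because $x\colon E\to\bP^1$ ramifies exactly over $\{0,\pm1,\infty\}$ and the second because $t\mapsto t^2$ ramifies only over $\{0,\infty\}$; thus $x^2$ is finite étale of degree $4$ away from $2$ (over $\bar K$ it is the quotient by the CM automorphism group $\langle[i]\rangle$).

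The argument then runs as follows. First pull $(M,\nabla)$ back along $[2]$: $[2]^*(M,\nabla)$ on $Y$ again has vanishing $p$-curvature for all $\fp$. Next push forward along $x^2$ to obtain a connection $(N,\nabla_N)=(x^2)_*[2]^*(M,\nabla)$ on $\bP^1-\{0,1,\infty\}$ of rank $N_0=4\,\rk M$, which by the remarks above still has vanishing $p$-curvature for all $\fp$. Theorem \ref{thm_str} now applies: $(N,\nabla_N)\cong(\cO^{N_0},d)$. Pulling back along $x^2$ gives $(x^2)^*(N,\nabla_N)\cong(\cO_Y^{N_0},d)$. Finally, by the standard adjunction for the finite étale morphism $x^2$, the connection $[2]^*(M,\nabla)$ is a direct summand, in the category of $\cO_Y$-modules with connection, of $(x^2)^*(x^2)_*[2]^*(M,\nabla)=(x^2)^*(N,\nabla_N)\cong(\cO_Y^{N_0},d)$; the associated projector is a horizontal endomorphism of $(\cO_Y^{N_0},d)$, hence a constant idempotent matrix over $K$, so its image is $V\otimes_K\cO_Y$ with the trivial connection for some subspace $V\subseteq K^{N_0}$. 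Therefore $[2]^*(M,\nabla)\cong(\cO_Y^{\rk M},d)$, and $f=[2]\colon Y\to X$ exhibits $(M,\nabla)$ as étale locally trivial.

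The main obstacle is the prime $2$: $E$ has bad (nodal) reduction there and the punctures $0,\pm1$ collide modulo $2$, so $[2]$ and $x^2$ are not finite étale over $\Z_{(2)}$, and the pullback/pushforward steps must be justified at $\fp\mid 2$. I expect this to be handled by working over a finite extension $K/\Q$ over which $E$ acquires good reduction — legitimate since the theorems are stated over an arbitrary number field — and by tracking the convergence condition through compatible semistable models of $X$, $Y$ and $\bP^1-\{0,1,\infty\}$ over $\cO_K$; alternatively one may invoke the companion statement for $\bP^1-\{\pm1,\pm i,\infty\}$ at the primes above $2$. The secondary technical point, needed at all $\fp$, is to check carefully that the convergence-condition part of the hypothesis (not merely the $p$-curvature) is preserved under finite étale pullback and pushforward, which again comes down to comparing the semistable models on the two sides and the $p$-adic discs and annuli they carry.
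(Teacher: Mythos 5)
Your route --- reducing to Theorem \ref{thm_str} through the correspondence $X \xleftarrow{\;[2]\;} E-E[2] \xrightarrow{\;x^2\;} \bP^1-\{0,1,\infty\}$ --- is not the one the paper takes (the paper applies Andr\'e's criterion directly on $X$ at the point $(0,0)$, obtaining the archimedean bound $2^{-5/2}\pi^{-2}\Gamma(1/4)^4=3.0949\cdots$ from Eremenko's formula for the uniformization radius of the once-punctured square torus together with theta identities and the Chowla--Selberg formula), and both the introduction and the remark immediately following the statement of Theorem \ref{thm_ec} assert that such a push-forward reduction is not available: the condition ``$p$-curvature vanishes for all $\fp$'' is not preserved under push-forward to $\bP^1-\{0,1,\infty\}$. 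The obstruction sits exactly at the step you flag but underestimate. At the finitely many places $v$ where $\psi_p$ is undefined or nonzero, the \emph{only} input is Definition \ref{ec_allp}(2): convergence of the horizontal sections of $(M,\nabla)$ on a disc of radius $p^{-1/(p(p-1))}$ around the single point $(0,0)$. To verify Definition \ref{def_allp}(2) for $(x^2)_*[2]^*(M,\nabla)$ at such a $v$ you must control horizontal sections on $D\bigl(\tfrac{1+\sqrt3 i}{2}, p^{-1/(p(p-1))}\bigr)$, hence control $(M,\nabla)$ near the images under $[2]$ of the points of $Y$ with $x^2=\tfrac{1+\sqrt3 i}{2}$ --- points lying in residue discs having nothing to do with $(0,0)$. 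At a good prime this control is supplied by $\psi_p\equiv 0$ on an integral model (Lemma \ref{lem_lmg} works at every integral point), which is why your reduction is sound for almost all $p$; at an exceptional prime the hypothesis says nothing at those points, and the exceptional set is an arbitrary finite set of primes, not just $\{2\}$.

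Even granting control at the correct points, the place $v\mid 2$ defeats the reduction quantitatively. Passing to an extension where $E$ has good reduction does not make $[2]$ or $t\mapsto t^2$ \'etale in residue characteristic $2$ ($[2]$ factors through Frobenius, and $0,1,-1$ collide mod $2$), so these maps are not isomorphisms of residue discs: for instance $t\mapsto t^2$ is injective near a unit $t_1$ only on $D(t_1,|2|_v)$, whose image is $D(t_1^2,|2|_v^{2})$, strictly smaller than the disc of radius $|2|_v^{1/2}$ demanded by Definition \ref{def_allp}(2). So Theorem \ref{thm_str} does not apply as stated, and weakening its condition at $2$ to convergence on $D(x_0,|2|_v^{2})$ costs at least $\tfrac32\log 2=1.039\cdots$ in the product-of-radii estimate, exceeding the margin $0.967\cdots$ available in Proposition \ref{alg} (let alone the margin $0.184\cdots$ in Lemma \ref{aRinf}); compare Remark \ref{examples}, where such losses at $2$ must be compensated by a sharper archimedean uniformization. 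Your categorical endgame (the trace splitting of $\pi^*\pi_*$ and the constancy of the horizontal projector) is correct, but the arithmetic hypotheses of Theorem \ref{thm_str} for the push-forward cannot be extracted from those of Theorem \ref{thm_ec}, which is precisely why the paper proves the elliptic-curve case from scratch.
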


Unlike the previous case, passing to a finite \'etale cover is necessary. We give an example of an $(M,\nabla)$ with $G_{\gal}$ equal to $\Z/2\Z$.

Katz has shown in \cite{K82}*{Thm.~10.2} that if the $p$-curvature conjecture holds, then for any vector bundle with an integrable connection $(M,\nabla)$ on a smooth variety $X$ over $K$ as above, 
the Lie algebra $\g_{\gal}$ of the differential Galois group $G_{\gal}$ of $(M,\nabla)$ is in some sense generated by the $p$-curvatures. Namely, let $K(X)$ be the function field of $X$. The $p$-curvature conjecture implies that $\g_{\gal}$ is the 
smallest algebraic Lie subalgebra of $\gl_n(K(X))$ such that for almost all $p$ the reduction of $\g_{\gal}$ mod $p$ contains the $p$-curvature.

We use Theorem \ref{thm_str} to prove a result analogous to Katz's theorem when $X=\mathbb{P}_K^{1}-\{0,1,\infty\}$. Of course, this result (Theorem \ref{thm_fin}) involves a condition at every 
prime $\mathfrak p$, but as a compensation we describe $G_{\gal}$ and not only its Lie algebra. 
In the geometric case, namely when $(M,\nabla)$ is the relative de Rham cohomology with the Gauss--Manin connection, this extra local condition is often vacuous.
We discuss the example of the Legendre family (Remark \ref{examples}) and show that a variant of our result implies that $\g_{\gal}$ is generated by the $p$-curvatures, which recovers a result of Katz.

The main tools used to prove Theorem \ref{thm_str} and Theorem \ref{thm_ec} are the algebraicity results of Andr\'e \cite{A05}*{Thm.~5.4.3} and Bost--Chambert-Loir \cite{BCL08}*{Thm.~6.1, Thm.~7.8}.
These results generalize the classical Borel--Dwork criterion for the rationality of a formal power series. This type of results requires estimating the radius of convergence of solutions for $(M,\nabla)$ at each place of $K$.
These techniques have been used previously by Andr\'e \cite{A05}*{Sec.~6} and Bost \cite{B01}*{2.4.2} to study the 
Grothendieck--Katz conjecture in the case when the algebraic monodromy group of $(M,\nabla)$ is {\em solvable.}

The paper is organized as follows. In section \ref{sec_general} to \ref{height}, we will focus on the case when $X=\mathbb{P}_K^{1}-\{0,1,\infty\}$. In section \ref{ec}, we discuss the case when $X$ is the affine elliptic curve with $j$-invariant $1728$.

In section \ref{sec_general}, we formulate our main result, and in particular the condition which substitutes for the vanishing of the $p$-curvature when it does not make sense to reduce $(M,\nabla)$ mod $\mathfrak p$. 
We then use the main result to deduce a description of the differential Galois group following Katz. 

In section \ref{sec_alg}, we use the criterion in \cite{A05} to prove that a vector bundle with a connection $(M,\nabla),$ as in the theorem, is locally trivial for the \'{e}tale topology of $X$. 
To do this, we apply Andr\'e's criterion to the formal horizontal sections of $(M,\nabla)$ centered at a specific point $x_0.$ We obtain a lower bound for Andr\'e's analogue of their radii of convergence at archimedean places, using the uniformization of $\mathbb{P}^1_\mathbb{C}-\{0,1,\infty\}$ by the unit disc, which arises from its interpretation as the moduli space of elliptic curves with level 2 structure.  The chosen point $x_0$ corresponds to the elliptic curve with smallest stable Faltings' height and we use the Chowla-Selberg formula to deduce the lower bound.
We also discuss in this section some variants of our main theorem and an example of the Legendre family mentioned above.

In section \ref{sec_rat}, we apply the rationality criterion in \cite{BCL08}
to prove the main theorem. We give a lower bound for the local capacity of $\Omega$, the image in $\mathbb{P}^1_\mathbb{C}-\{0,1,\infty\}$
of a standard fundamental domain for $\Gamma(2)$ under the uniformization mentioned above. Together with the algebraicity of our formal solution proved in section \ref{sec_alg}, this allows us to apply the criterion in \cite{BCL08}, and deduce that the solutions of $(M,\nabla)$ are rational.

Section \ref{height} is devoted to an interpretation of our computations
in section \ref{sec_alg} in terms of the stable Faltings height, obtained by relating our estimate for archimedean places to the Arakelov degree of the restriction of the tangent bundle to some point.

In section \ref{ec}, we prove our theorem when $X$ is an affine elliptic curve with $j$-invariant $1728$ using Andr\'e's criterion and ideas in section \ref{sec_alg}. As in section \ref{sec_general}, we define the notion of $p$-curvature vanishing at bad primes using local convergence condition. Using the property of theta functions and Weierstrass-$\wp$ functions, we deduce from a result of Eremenko \cite{E} a lower bound of the archimedean radii.

In section \ref{ec_eg}, we first give an example of an $(M,\nabla)$ over the affine elliptic curve in section \ref{ec} such that its $p$-curvatures vanish for all $\fp$ but its $G_{\gal}$ is $\Z/2\Z$.
More precisely, $(M,\nabla)$ is the push-forward of $(\cO,d)$ via the degree two self-isogeny of the elliptic curve. In the second half, we discuss a variant of our main theorems when $X$ is $\bA^1-\{\pm 1,\pm i\}$ with the conclusion that $(M,\nabla)$ has finite monodromy. The proof relies on the result of Eremenko used in last section. We also give an example to show that even when $(M,\nabla)$ has good reduction everywhere and all its $p$-curvatures vanish, it can still have local monodromies of order two around the singular points $\pm 1, \pm i,\infty$.

\section*{Acknowledgement}
I thank Mark~Kisin for introducing this problem to me and all the enlightening discussions. I thank Yves~Andr\'e, Noam~Elkies, H\'el\`ene~Esnault, and Benedict~Gross for useful comments. Moreover, I am grateful to Cheng-Chiang~Tsai for conversations related to this topic and to George~Boxer, K\k{e}stutis~\v{C}esnavi\v{c}ius, Chao~Li, Andreas~Maurischat, Koji~Shimizu, Junecue~Suh, and Jerry~Wang for comments on drafts of the paper. 

\section{Statement of the main results}\label{sec_general}
Let $K$ be a number field and $\cO_K$ its ring of integers. Let $X$ be $\mathbb{P}_{\cO_K}^{1}-\{0,1,\infty\}$ and $M$ a vector bundle with a connection  $\nabla\colon\, M\rightarrow\Omega_{X_K}^{1}\otimes M$ over $X_K$. 
For a finite place $v$ of $K$ lying over a prime $p$, let $K_v$ be the completion of $K$ with respect to $v$ and denote by $\cO_v$ and $k_v$ the ring of integers and residue field of $K_v$. 
For $\Sigma$ a finite set of finite rational primes, we set $\cO_{K,\Sigma} = \cO_K[1/p]_{p \in \Sigma} \subset K.$ 

\subsection{The $p$-curvature and $p$-adic differential Galois groups}\label{pcurv}
\begin{para}\label{def_pcurv}
For $\Sigma,$ as above, sufficiently large, $(M, \nabla)$ extends to a vector bundle with connection (again denoted $(M,\nabla)$) over $X_{\cO_{K,\Sigma}}$. 
In particular, if $p \notin \Sigma$ we can consider the pull back of $(M,\nabla)$ to $X\otimes \Z/p\Z.$ 
If $D$ is a derivation on $X\otimes \Z/p\Z$, so is $D^p$. Let $\nabla(D)$ be the map $(D\otimes \mathrm{id})\circ \nabla$. Then on $X\otimes \Z/p\Z$, the \emph{$p$-curvature} 
is given by (see \cite{K82}*{Sec.~VII} for details)
\footnote{We could have defined the $p$-curvatures by considering derivations on $X_{k_v}$ for $v$ a place of $K.$ For primes which are unramified in $K,$ the two definitions are essentially equivalent, and the present definition will allow us to formulate the inequalities which arise below in a more uniform manner.
}
$$\psi_p(D)\colonequals\nabla(D^p)-\nabla(D)^p\in \mathrm{End}_{\cO_{X\otimes \Z/p\Z}}(M\otimes \Z/p\Z).$$ 
In particular, $ \psi_p \left(\frac{d}{dx}\right)=-\left(\nabla\left(\frac{d}{dx}\right)\right)^p$.
Since $\psi_p(D)$ is $p$-linear in $D$, for $X=\mathbb{P}_{\cO_K}^{1}-\{0,1,\infty\}$, the equation $\psi_p\equiv0$ is equivalent to
$ -\left(\nabla\left(\frac{d}{dx}\right)\right)^p\equiv0$.

In general, the $\psi_p$ depends on the choice of extension of $(M,\nabla)$ over $X_{\cO_{K,\Sigma}}.$ However, any two such extensions are isomorphic over $X_{\cO_{K,\Sigma'}}$ for some sufficiently large $\Sigma'.$
\end{para}

\begin{para}
Let $L$ be a finite extension of $K$ and $w$ a place of $L$ over $v$. We view $L$ as a subfield of $\C_p$ via $w.$ 
Fix an $x_0\in X(L_w).$ Given a positive real number $r$, we denote by $D(x_0,r)$ the open rigid analytic disc of radius $r,$ with center $x_0.$ Thus  
$$D(x_0,r)=\{x\in X(\C_p)\text{ such that }|x-x_0|_p<r \},$$ 
where $|\cdot|_p$ is normalized so that $|p|_p=p^{-1}$. 

Let $M^\vee$ be the dual vector bundle of $M$. It is naturally endowed with the connection such that for any local sections $m,l$ of $M$ and $M^\vee$ respectively, 
$$ d\langle l, m \rangle = \langle \nabla_{M^\vee}(l), m \rangle+\langle l, \nabla_{M}(m)\rangle .$$
\end{para}
\begin{defn} If $(V,\nabla)$ is a vector bundle with connection over some scheme or rigid space, 
we denote by $\langle V, \nabla \rangle^\otimes,$ or simply $\langle V \rangle^\otimes,$ if there is no risk of confusion regarding the connection $\nabla,$ the category of $\nabla$-stable sub quotients of all the tensor products 
$V^{\otimes m}\otimes (V^{\vee})^{\otimes n}$ for $m,n \geq 0$. If the scheme or rigid space over which $V$ is a vector bundle is connected, then this is a Tannakian category.
\end{defn}
\begin{defn}Let $F_w$ be the field of fractions of the ring of all rigid analytic functions on $D(x_0,r)$ and $\eta_w\colon \mathrm{Spec}(F_w)\rightarrow X$ the natural map. Consider the fiber functor $$\eta_w\colon \langle M|_{D(x_0,r)}\rangle ^\otimes \rightarrow \mathrm{Vec}_{F_w}; \quad V\mapsto V_{\eta_w}.$$ The \emph{$p$-adic differential Galois group} $G_w(x_0,r)$ is defined to be the automorphism group $\Aut^\otimes \eta_w$ of $\eta_w$.
\end{defn}

For $v|p$ a finite place of $K,$ we will say that $(M,\nabla)$ has {\em good reduction} at $v$ if 
$(M,\nabla)$ extends to a vector bundle with connection on $X_{\cO_v}$.
The following lemma gives the basic relation between the $p$-curvature and the $p$-adic differential Galois group.

\begin{lemma}\label{lem_lmg}
Let $x_0\in X(\cO_{L_w})$ and suppose that $(M,\nabla)$ has good reduction at $v.$
If the $p$-curvature vanishes, then the local differential Galois group 
$G_w(x_0,p^{\frac{1}{p(p-1)}})$ is trivial.
\end{lemma}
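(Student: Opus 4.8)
Triviality of $G_w(x_0,r)$, for $r=p^{1/(p(p-1))}$, is equivalent to triviality of $(M,\nabla)$ as a connection on $D(x_0,r)$, i.e.\ to the existence of $\rk M$ horizontal sections of $M$, rigid analytic on $D(x_0,r)$, forming an $\cO_{D(x_0,r)}$-basis of $M$: if such sections exist, then the fiber functor $\eta_w$ identifies $\langle M|_{D(x_0,r)}\rangle^\otimes$ with $\mathrm{Vec}_{F_w}$, so $G_w(x_0,r)=\Aut^\otimes\eta_w$ is trivial. Thus the plan is to produce such a full system of horizontal sections.

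First I would pass to the integral model. Since $x_0\in X(\cO_{L_w})$, its reduction lies in $X(k_v)$ — equivalently $x_0$ and $x_0-1$ are units in $\cO_{L_w}$ — so $x_0$ lies in the good locus of the model of $(M,\nabla)$ over $X_{\cO_v}$ provided by good reduction. On the formal neighbourhood of $x_0$ this is a free $\cO_{L_w}[[t]]$-module ($t=x-x_0$) carrying $\partial\colonequals\nabla(d/dt)$, which preserves the integral lattice. Fixing an integral basis with connection matrix $A\in M_{\rk M}(\cO_{L_w}[[t]])$ and the fundamental solution $Y(t)\in\mathrm{Id}+tM_{\rk M}(L_w[[t]])$ with horizontal columns and $Y(0)=\mathrm{Id}$, the recursion $(n+1)Y_{n+1}=-\sum_{k=0}^nA_kY_{n-k}$ shows that $n!\,Y_n$ is an integral universal polynomial in the coefficients of $A$; in particular $v_p(Y_n)\ge -v_p(n!)$, i.e.\ $Y$ converges a priori only on a disc of radius $\ge p^{-1/(p-1)}<1$.

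The key step is to enlarge the radius past $1$, up to $p^{1/(p(p-1))}$, and here the hypothesis enters. On a curve $(d/dx)^p=0$ in characteristic $p$, so $\psi_p(d/dx)=-\partial^p$, and the vanishing of the $p$-curvature means $\partial^p\equiv0\pmod p$ on the integral model near $x_0$. Iterating this congruence through the Leibniz formula $\partial^p(g\,e)=\sum_{l=0}^p\binom pl\,g^{(l)}\partial^{p-l}(e)$ — using that $p\mid\binom pl$ for $0<l<p$ and that a product of $p$ consecutive integers is divisible by $p$ — one shows that $\partial^p$ carries the $p^j$-multiple of the lattice into its $p^{j+1}$-multiple, hence $v_p(\partial^n(\text{lattice}))\ge\lfloor n/p\rfloor$, so $v_p(Y_n)\ge\lfloor n/p\rfloor-v_p(n!)$. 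Feeding in $v_p(n!)=(n-s_p(n))/(p-1)$, with $s_p$ the base-$p$ digit sum, and balancing the two terms — the extremal case occurring for $n$ near a power of $p$ — gives the estimate on $|Y_n|_p$ that makes the columns of $Y$, regarded as sections of $M$ over $X$ (hence allowed poles along the points $0,1$, which lie inside the disc $\{|x-x_0|_p<r\}$ but not in $X$), rigid analytic on $D(x_0,p^{1/(p(p-1))})$; this is the standard radius-of-convergence estimate for a connection with good reduction and vanishing $p$-curvature. Evaluating $Y$ at an integral basis of $M_{x_0}$ then yields the required horizontal basis on $D(x_0,r)$, so $(M,\nabla)|_{D(x_0,r)}$ is trivial and $G_w(x_0,r)=1$.

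I expect the main obstacle to be precisely this convergence estimate, and in particular the exact radius $p^{1/(p(p-1))}$: good reduction by itself only secures a radius $<1$, which is useless, and it is the vanishing of the $p$-curvature that enlarges it beyond $1$; obtaining the exponent $\tfrac1{p(p-1)}$ rather than a weaker positive constant is the delicate part, amounting to matching the $p$-divisibility gained from each application of $\partial^p\equiv0\pmod p$ against the factorial denominators $1/n!$, with equality met at $n=p^k$ because $v_p(n!)=(p^k-1)/(p-1)$ is balanced there against $v_p(\partial^{p^k})\ge p^{k-1}$.
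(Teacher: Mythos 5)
Your argument is essentially the paper's own proof: expand a horizontal section as the Taylor series $\sum_i \nabla(\tfrac{d}{dx})^i(m_0)\,\tfrac{(x-x_0)^i}{i!}(-1)^i$ in an integral model, use $\psi_p\equiv 0$ to get $\nabla(\tfrac{d}{dx})^p(\cM)\subset p\cM$ and hence $\nabla(\tfrac{d}{dx})^i(m_0)\in p^{\lfloor i/p\rfloor}\cM$, and balance this gain against $v_p(i!)$. One correction: this estimate yields convergence on the disc of radius $p^{-1/(p(p-1))}$ (the radius used everywhere else in the paper, e.g.\ in the definition of $G_w$) rather than $p^{1/(p(p-1))}$ --- the exponent in the statement is a sign typo --- so the vanishing of the $p$-curvature improves the radius from the a priori $p^{-1/(p-1)}$ to $p^{-1/(p(p-1))}$, which is still less than $1$, and your remarks about ``enlarging the radius past $1$'' should be adjusted accordingly (radius $1$ would require a Frobenius structure, as in Remark \ref{rmk_M}).
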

\begin{proof} To show that $G_w(x_0,p^{\frac{1}{p(p-1)}})$ is trivial, we have to show that 
the restriction of $M$ to $D(x_0,p^{\frac{1}{p(p-1)}})$ admits a full set of solutions. 
It is well known that this is the case when $\psi_p \equiv 0,$ but for the 
convenience of the reader we sketch the argument. 
See \cite{B01}*{section 3.4.2, prop. 3.9} for related arguments.

Assume there is an extension of $(M,\nabla)$ to a vector bundle with connection $(\cM, \nabla)$ 
over $X_{\cO_v}$. If $m_0$ is any section of $\cM$, then a formal section in the kernel of $\nabla$ 
is given by 
$$ m = \sum_{i=0}^{\infty} \nabla\left(\frac d {dx}\right)^i(m_0) \frac{(x-x_0)^i}{i!}(-1)^i.$$
Since $\psi_p \equiv 0$ (recall that this means the $p$-curvature vanishes on $X_{\cO_v}\otimes \Z/p\Z$), 
we have $\nabla(\frac d {dx})^p(\cM) \subset p\cM.$ Hence $\nabla(\frac d {dx})^i(m_0) \subset p^{\left[ \frac i p\right]}\cM,$ and one sees easily that the series defining $m$ converges on $D(x_0,p^{\frac{1}{p(p-1)}}).$
\end{proof}

\begin{rems}\label{rmk_M}
\leavevmode
\begin{enumerate}
\item
Unlike the notion of $p$-curvature, the definition of $G_w(x_0,r)$ does not require $(M,\nabla)$ to have good reduction. 
It depends only on the $\cO_v$-model of $X$ (which we of course always take to be 
$\mathbb P^1_{\cO_v} - \{0,1,\infty\}$),  which is used to define $D(x_0,r),$ 
but not on how $(M,\nabla)$ is extended. 
\item
If $(M,\nabla)$ has good reduction with respect to $X_{\cO_v}$ and it admits a Frobenius structure with respect to some Frobenius lifting on $X_{\cO_v}$, then $G_w(x_0,1)$ is trivial whenever $x_0\in X(\cO_v)$. See for example \cite{K10}*{17.2.2, 17.2.3}.
\end{enumerate}
\end{rems}

From now on we set $x_0 = \frac{1+\sqrt{3}i}{2},$ which corresponds to the elliptic curve with smallest stable Faltings height. In section \ref{height}, we will give a theoretical explanation of why this choice gives the best possible estimates. 
We set $G_w = G_w\big(\frac{1+\sqrt{3}i}{2}, p^{-\frac{1}{p(p-1)}}\big),$ and 
we take $L$ to be a number field containing $K(\sqrt{3}i).$

By Lemma \ref{lem_lmg}, the local differential Galois group $G_w$ is trivial when the vector bundle with connection $(M,\nabla)$ has good reduction over $v$, and $\psi_p \equiv 0$. This motivates the following definition: 
\begin{defn}\label{def_allp}
We say that {\em the $p$-curvatures of $(M,\nabla)$ vanish for all $p$} if 
\begin{enumerate}
\item $\psi_p \equiv 0$ for all but finitely many $p$,
\item $G_w= \{1\}$ for all primes $w$ of $L$.
\end{enumerate}
\end{defn}

By what we have just seen, for all but finitely many $p,$ the condition (1) makes sense, and implies (2). 
Thus (2) is only an extra condition at finitely many primes.
As above, the definition does not depend on the extension of $(M,\nabla)$ to $X_{\cO_{K,\Sigma}}$ or the choice 
of primes $\Sigma.$

\subsection{The main theorem and a Tannakian consequence}

\begin{theorem}\label{thm_str}
 Let $(M,\nabla)$ be a vector bundle with a connection over $X_K=\mathbb{P}_K^{1}-\{0,\,1,\,\infty\},$ 
 and suppose that the $p$-curvatures of $(M,\nabla)$ vanish for all $p.$
Then $(M,\nabla)$ admits a full set of rational solutions. 
\end{theorem}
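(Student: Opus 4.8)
The statement is equivalent to the triviality of the differential Galois group $G_{\gal}$ of $(M,\nabla)$ over $K(X)$, and I would establish it in two stages: first that $(M,\nabla)$ has \emph{finite} monodromy, i.e.\ becomes trivial after a finite \'etale pull-back, by means of Andr\'e's algebraicity criterion \cite{A05}*{Thm.~5.4.3}, and then that the monodromy is in fact trivial, by means of the rationality criterion of Bost--Chambert-Loir \cite{BCL08}*{Thm.~6.1, Thm.~7.8}; both are arithmetic refinements of the Borel--Dwork criterion. Since $\nabla$ is a connection on a curve, $M$ is locally free and locally trivial, so the formal completion of $(M,\nabla)$ at the point $x_0=\frac{1+\sqrt{3}i}{2}$ is trivial; fix a fundamental matrix $\widehat Y\in \GL_n(L[[x-x_0]])$ of formal horizontal sections, where $n=\rk M$, and note that the theorem amounts to showing that the entries of $\widehat Y$ lie in $L(X)$, from which the statement over $K$ follows by descent. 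Observe also that $x_0=e^{i\pi/3}$ and $x_0-1=e^{2i\pi/3}$ are roots of unity, hence units in $\cO_L$, so $x_0\in X(\cO_{L_w})$ for \emph{every} finite place $w$ of $L$; this is what lets us invoke Lemma~\ref{lem_lmg} at all the finitely many good places.

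The first task is to record the local radii of convergence of $\widehat Y$. At a finite place $w\mid p$ of $L$, the hypothesis that the $p$-curvatures of $(M,\nabla)$ vanish for all $p$ gives that $\widehat Y$ converges on the disc $D\big(x_0,p^{-\frac{1}{p(p-1)}}\big)$: for the finitely many $p$ at which $\psi_p$ may fail to vanish or $(M,\nabla)$ may fail to have good reduction, this is precisely condition~(2) of Definition~\ref{def_allp}, that $G_w=\{1\}$; for all remaining $p$ one has good reduction at $w$, $\psi_p\equiv 0$, and $x_0\in X(\cO_{L_w})$, so Lemma~\ref{lem_lmg} gives convergence on this disc (indeed on a larger one). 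At the archimedean place I would use the uniformization $\phi\colon \mathbb{D}\to \bP^1_{\C}-\{0,1,\infty\}$ arising from the interpretation of $X$ as the moduli of elliptic curves with full level-$2$ structure: pulling $(M,\nabla)$ back along $\phi$ to the simply connected disc $\mathbb{D}$ trivializes the connection, so $\widehat Y$ is the composite of a single-valued holomorphic fundamental solution on $\mathbb{D}$ with a local branch of $\phi^{-1}$ near the point $\tilde x_0$ above $x_0$, and hence continues analytically (multivaluedly) along every path in $X(\C)$. The complex radius of convergence of $\widehat Y$ at $x_0$ is then controlled by the local behaviour of $\phi$ near $\tilde x_0$; since $x_0$ is the CM point of the hexagonal lattice — the elliptic curve of smallest stable Faltings height — the relevant period is evaluated by the Chowla--Selberg formula, producing an explicit lower bound for this archimedean radius.

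With these estimates in hand I would apply Andr\'e's criterion \cite{A05}*{Thm.~5.4.3}. The global nilpotence hypothesis is supplied by $\psi_p\equiv 0$ for almost all $p$, and the collection of $p$-adic radii $p^{-\frac{1}{p(p-1)}}$ together with the archimedean radius from the previous step satisfies the size inequality required by that theorem — it is exactly here that choosing $x_0$ to be the point of minimal Faltings height makes the archimedean contribution large enough to win. The conclusion is that the entries of $\widehat Y$ are algebraic over $K(X)$, equivalently that there is a finite \'etale $f\colon Y\to X_K$ with $f^{*}(M,\nabla)\cong(\cO_Y^{\,n},d)$. Finally, to upgrade to rational solutions I would feed this algebraicity, the same $p$-adic radii, and a lower bound for the logarithmic capacity of $\Omega$ — the image in $\bP^1_\C-\{0,1,\infty\}$ of a standard fundamental domain for $\Gamma(2)$, again estimated through the uniformization $\phi$ — into the Bost--Chambert-Loir criterion \cite{BCL08}*{Thm.~6.1, Thm.~7.8}; this forces the algebraic formal solutions $\widehat Y$ to be rational functions, i.e.\ $M^{\nabla=0}$ generates $M$ as an $\cO_X$-module.

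The main obstacle is the archimedean analysis underlying both applications: obtaining effective lower bounds for the complex radius of convergence of $\widehat Y$ at $x_0$ (needed for Andr\'e's criterion) and for the capacity of $\Omega$ (needed for Bost--Chambert-Loir), and verifying that they exceed the product $\prod_p p^{-\frac{1}{p(p-1)}}$ coming from the finite places. This is precisely what requires the explicit uniformization of $\bP^1_\C-\{0,1,\infty\}$, the geometry of the $\Gamma(2)$-fundamental domain, and the Chowla--Selberg evaluation of the period at the hexagonal CM point $x_0$; by comparison the non-archimedean input is essentially formal, being Lemma~\ref{lem_lmg} together with hypothesis~(2).
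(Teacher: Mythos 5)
Your proposal follows essentially the same route as the paper: algebraicity via Andr\'e's criterion using the archimedean radius bound at $x_0=\frac{1+\sqrt{3}i}{2}$ obtained from the $\Gamma(2)$-uniformization and the Chowla--Selberg formula, combined with the $p$-adic radii $p^{-\frac{1}{p(p-1)}}$ from Lemma~\ref{lem_lmg} and condition (2); then rationality via Bost--Chambert-Loir using the capacity of the image of a $\Gamma(2)$-fundamental domain. The structure, choice of base point, and all key ingredients match the paper's proof, with only the explicit numerical verifications (the constants $5.632\cdots$, $0.45685\cdots$, and the sum $\sum_p\frac{\log p}{p(p-1)}$) left to be carried out.
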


The proof of this theorem is the subject of sections \ref{sec_alg}, \ref{sec_rat}. 
\begin{rems} 
By varying the conditions on the radii of convergence in (2), one can prove variants of Theorem \ref{thm_str}, whose conclusion is that $(M,\nabla)$ has finite monodromy. See Remark \ref{examples} for details.

Andr\'e has pointed out that, if one replaces (2) in Definition \ref{def_allp} by the condition that the so called {\em generic radii} of all formal horizontal sections of $(M,\nabla)$ are at least $p^{-\frac 1{p(p-1)}}$, then the analogue of Theorem \ref{thm_str} admits an easier proof. Indeed if $w|p,$ and the $w$-adic {\em generic radius} is at least  $p^{-\frac 1{p(p-1)}},$ then by \cite{BS}*{Sec. IV}, $p$ 
cannot divide the  (finite by (1) and Katz's theorem \cite{K70}*{Thm. 13.0}) order of the local monodromies. If this condition holds for all $w,$ then the 
local monodromies around $0,1,\infty$ are all trivial and hence the global monodromy is trivial.  

Once one uses (1) to show that the local monodromies are finite, this argument is `prime by prime'. We do not know if Theorem \ref{thm_str} admits a similar proof, which avoids global arguments, although this seems to us unlikely. In any case, our method allows us to deal with some cases when $X$ is an affine elliptic curve or the projective line minus more than three points. See Theorem \ref{thm_ec} and Proposition \ref{5pts}. The conclusion of both results is that $(M,\nabla)$ has finite monodromy and we will give examples in section \ref{ec_eg} with nontrivial monodromy. It seems unlikely that these results can be proved with a `prime by prime' argument.
\end{rems}

Applying Lemma \ref{lem_lmg}, we have the following corollary:

\begin{corollary}\label{main}
If $(M,\nabla)$ is defined over $X_{\Z}$ and the $p$-curvature vanishes for all primes, then $(M,\nabla)$ admits a full set of
rational solutions.
\end{corollary}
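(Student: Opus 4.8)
The plan is to deduce this directly from Theorem~\ref{thm_str} by verifying that the present hypotheses imply those of Definition~\ref{def_allp}. We may take $K=\Q$, so that $X_K = X_\Z\otimes\Q$, and set $L=\Q(\sqrt{3}i)$ as allowed by the setup preceding Definition~\ref{def_allp}. Since $(M,\nabla)$ extends to a vector bundle with connection over $X_\Z$, it has good reduction at every finite place of $\Q$; pulling this integral model back along $\Spec\cO_L\to\Spec\Z$ shows that $(M,\nabla)$ has good reduction at every finite place $w$ of $L$ as well. Condition~(1) of Definition~\ref{def_allp} is immediate from the hypothesis, which gives $\psi_p\equiv 0$ for \emph{every} rational prime $p$, a fortiori for all but finitely many.

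For condition~(2) I would invoke Lemma~\ref{lem_lmg}. The one point worth spelling out is that $x_0=\frac{1+\sqrt{3}i}{2}$ lies in $X(\cO_{L_w})$ for every finite place $w$ of $L$: indeed $x_0$ is a root of $t^2-t+1$, hence an algebraic integer, and it satisfies $x_0(x_0-1)=-1$, so both $x_0$ and $x_0-1$ are units in $\cO_L$. Consequently the reduction of $x_0$ modulo any $w$ avoids $0$, $1$ and $\infty$, i.e.\ $x_0\in X(\cO_{L_w})$. Since moreover $(M,\nabla)$ has good reduction at the prime $p$ below $w$ and $\psi_p\equiv 0$, Lemma~\ref{lem_lmg} shows that the local differential Galois group on the disc $D\big(x_0,p^{\frac{1}{p(p-1)}}\big)$ is trivial; restricting a full set of horizontal sections from this disc to the (no larger) disc of radius $p^{-\frac{1}{p(p-1)}}$ shows that $G_w$ is trivial. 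This gives condition~(2).

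With both conditions of Definition~\ref{def_allp} verified, Theorem~\ref{thm_str} applies and yields a full set of rational solutions of $(M,\nabla)$, which is the assertion of the corollary. There is essentially no obstacle here, as the argument is pure bookkeeping; the only mild subtlety is the verification that the distinguished point $x_0$ specializes into $X$ at every prime, which is exactly what makes the choice $x_0=\frac{1+\sqrt{3}i}{2}$ (a primitive sixth root of unity) harmless for this reduction, together with keeping track of the inequality between the two radii $p^{\pm\frac{1}{p(p-1)}}$.
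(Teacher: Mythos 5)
Your argument is correct and is exactly the intended one: the paper derives this corollary by "applying Lemma \ref{lem_lmg}" to see that good reduction everywhere plus $\psi_p\equiv 0$ for all $p$ gives both conditions of Definition \ref{def_allp}, after which Theorem \ref{thm_str} applies. Your extra care in checking that $x_0$ lies in $X(\cO_{L_w})$ for every $w$ and in reconciling the two radii $p^{\pm\frac{1}{p(p-1)}}$ is sound bookkeeping that the paper leaves implicit.
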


\begin{para}\label{def_G}
As in \cite{K82}, we can use our main theorem to give a description of the differential Galois group of any vector bundle with a connection $(M,\nabla)$ over $X_K$. 

Let $K(X)$ be the function field of $X_K$. Let $\omega$ be the fibre functor on $\langle M \rangle ^\otimes$ given by restriction to the generic point of $X_K$. Write $G_{\gal}= \Aut^\otimes \omega \subset \mathrm{GL}(M_{K(X)})$ for the corresponding differential Galois group
(see \cite{K82}*{Ch.~IV} and \cite{A05}*{1.3, 1.4}).

Let $G$  be the smallest closed subgroup of $\GL(M_{K(X)})$ such that:
\begin{enumerate}
\item For almost all $p,$ the reduction of $\Lie G$ mod $p$ contains $\psi_p.$
\item $G\otimes F_w$ contains $G_w$ for all $w,$ where, as above, $F_w$ is 
the field of fractions of the ring of rigid analytic functions on $D\big(\xO, p^{-\frac{1}{p(p-1)}}\big)$.
\end{enumerate}

Let $\mathfrak{g}$ be the smallest Lie subalgebra of $\GL(M_{K(X)})$ such that for almost all $p,$ 
the reduction of $\mathfrak{g}$ mod $p$ contains $\psi_p.$
As proved in \cite{K82}*{Prop.~9.3}, $\mathfrak{g}$ is contained in $\Lie\, G_{\gal}$. 
Moreover, $G_w$ is contained in $G_{\gal}\otimes F_w$ by definition. Hence $G$ is a subgroup of $G_{\gal}$.
We will see from the proof of the following theorem that (in the presence of the condition (1)), 
to define $G$ we only need to impose the condition (2) at finitely many primes. 
\end{para}

\begin{theorem}\label{thm_fin}
Let $(M,\nabla)$ be a vector bundle with a connection defined over $X_K=\mathbb{P}_K^{1}-\{0,\,1,\,\infty\}$. Then $G=G_{\gal}$.
\end{theorem}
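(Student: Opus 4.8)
The plan is to deduce Theorem~\ref{thm_fin} from Theorem~\ref{thm_str} by a Tannakian argument in the spirit of \cite{K82}*{Thm.~10.2}. We already know $G \subseteq G_{\gal}$, so the task is to show $G_{\gal} \subseteq G$. Equivalently, writing $\langle M \rangle^\otimes$ for the Tannakian category generated by $(M,\nabla)$, it suffices to show that every object $(N,\nabla)$ of $\langle M \rangle^\otimes$ on which $G$ acts trivially is a trivial connection, i.e. admits a full set of rational solutions over $X_K$. Indeed, if $G$ fixes a line $\ell$ in some construction $M^{\otimes m} \otimes (M^\vee)^{\otimes n}$, then $G$ acts on $\ell$ through a character, and since $G \subseteq G_{\gal}$ and characters of $G_{\gal}$ correspond to rank-one subquotients, running the argument on $\ell \otimes \ell^\vee$ (or on $\ell^{\otimes N}$ for suitable $N$) reduces the general Tannakian reconstruction statement to the case of a trivial-$G$ object; this is the standard reduction (cf. \cite{A05}*{1.3, 1.4}) and I would cite it rather than reprove it.

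So fix $(N,\nabla) \in \langle M \rangle^\otimes$ with $G|_N = \{1\}$. The key point is that $(N,\nabla)$ then satisfies the hypotheses of Theorem~\ref{thm_str}, i.e. its $p$-curvatures vanish for all $p$ in the sense of Definition~\ref{def_allp}. For condition~(1): the $p$-curvature is functorial for the operations defining $\langle M\rangle^\otimes$ (tensor, dual, subquotient), and it acts on $N$ through the reduction mod $p$ of $\Lie G$ by hypothesis~(1) in \ref{def_G}; since $G|_N$ is trivial, $\Lie G|_N = 0$, so $\psi_p|_N \equiv 0$ for almost all $p$ — here one uses that $\psi_p$ lies in $\Lie G$ mod $p$, a result of Katz, combined with $N$ being a $G$-subquotient. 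For condition~(2): the local differential Galois group $G_w$ is, by construction, the Tannakian group of the fiber functor $\eta_w$ on $\langle M|_{D(x_0,r)}\rangle^\otimes$; it is functorial, and $G_w \subseteq G \otimes F_w$ by hypothesis~(2) in \ref{def_G}, so $G_w$ acts on $N_{\eta_w}$ through $G|_N \otimes F_w = \{1\}$. Hence the local group $G_w(N) = \{1\}$ for all $w$, which is exactly condition~(2) of Definition~\ref{def_allp} for $N$. Therefore Theorem~\ref{thm_str} applies to $(N,\nabla)$ and gives $N^{\nabla=0}$ generating $N$, i.e. $(N,\nabla)$ is trivial.

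Feeding this back into the Tannakian formalism: a subgroup $H \subseteq G_{\gal}$ equals $G_{\gal}$ as soon as every object of $\langle M\rangle^\otimes$ fixed by $H$ is trivial, because $G_{\gal}$ is reconstructed as the automorphism group of the fiber functor $\omega$ and $\langle M \rangle^\otimes / (\text{objects with trivial } G_{\gal}\text{-action})$ — more precisely, $\mathrm{Rep}(G_{\gal}/H)$ is generated by the $H$-fixed subquotients, which by the above are all trivial, forcing $G_{\gal}/H$ to be the trivial group. Applying this with $H = G$ yields $G = G_{\gal}$. Finally, the remark that one only needs condition~(2) at finitely many primes follows because, by Lemma~\ref{lem_lmg} together with the fact that $(M,\nabla)$ (hence every object of $\langle M\rangle^\otimes$) has good reduction outside a finite set $\Sigma$ and vanishing $p$-curvature there by (1), one has $G_w(N) = \{1\}$ automatically for all $w$ outside $\Sigma$, so only the primes in $\Sigma$ contribute to the defining conditions of $G$.

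The main obstacle I anticipate is purely bookkeeping rather than conceptual: one must check that the two defining conditions of $G$ transfer correctly through the Tannakian operations and, in particular, that the local groups $G_w$ behave functorially and that $G_w \subseteq G \otimes F_w$ really does force triviality of the $G_w$-action on every $G$-fixed object — this requires that passing to $D(x_0,r)$ and taking the fiber functor $\eta_w$ commutes with forming subquotients and tensor constructions, which is true but should be stated carefully. A secondary subtlety is the precise form of Katz's result that $\psi_p$ lies in (the mod-$p$ reduction of) the Lie algebra of the global differential Galois group and its compatibility with our $G$; once that is in hand, the argument is a formal consequence of Theorem~\ref{thm_str}.
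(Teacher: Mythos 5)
Your second paragraph is essentially sound and matches the paper's use of Theorem \ref{thm_str}: for an honest object $N$ of $\langle M\rangle^\otimes$ on which $G$ acts trivially, conditions (1) and (2) of Definition \ref{def_allp} do follow from the two defining conditions of $G$, and Theorem \ref{thm_str} then forces $N$ to be trivial. The gap is in the reduction. The criterion you rely on --- that a closed subgroup $H\subseteq G_{\gal}$ equals $G_{\gal}$ as soon as every object of $\langle M\rangle^\otimes$ on which $H$ acts trivially is a trivial connection --- is false. Take $G_{\gal}=\SL_2$ with $M$ corresponding to the standard representation $V$, and let $H$ be a Borel subgroup: every subquotient of the $V^{\otimes m}\otimes(V^{\vee})^{\otimes n}$ is a direct sum of the irreducibles $\Sym^kV$, and $H$ acts trivially on such a subquotient only when every $k=0$, i.e.\ only when the subquotient is already $\SL_2$-trivial; yet $H\neq\SL_2$. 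Your justification via $\mathrm{Rep}(G_{\gal}/H)$ also tacitly assumes $H$ normal in $G_{\gal}$, which is not known (the paper proves normality only for the groups $H_w$, in a separate lemma). The fallback in your first paragraph does not repair this: the Chevalley line $L'$ is in general not $\nabla$-stable, so $\ell^{\otimes N}$ is not an object of $\langle M\rangle^\otimes$, and $\ell\otimes\ell^{\vee}$ is canonically trivial and carries no information.

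What is actually needed, and what the paper does following Katz, is the following. Take the Chevalley pair $(W,L')$ with $G$ the stabilizer of $L'$ in $G_{\gal}$, pass to the smallest $\nabla$-stable subspace $W'\supseteq L'$, which has a basis $l,\nabla l,\dots,\nabla^{r-1}l$, and form $N=\Sym^rW'\otimes\det(W')^{\vee}$. One must then verify the hypotheses of Theorem \ref{thm_str} for $N$. The nontrivial point --- which you defer as ``bookkeeping'' --- is that $G_w$, which a priori only acts by a character $\chi$ on $L'_{\eta_w}$, in fact acts by $\chi$ on all of $W'_{\eta_w}$ and hence trivially on $N_{\eta_w}$; this uses that $W'$ is spanned by the $\nabla$-derivatives of $l$, together with a careful passage between $\nabla$-stable sub-bundles over $D(x_0,r)$ and $G_w$-subrepresentations over $F_w$, and it occupies the entire final paragraph of the paper's proof. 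Likewise, the vanishing of the $p$-curvature of $\cN=\Sym^r\cW\otimes\det\cW^{\vee}$ for $p\notin\Sigma$ is Katz's computation (using that $\psi_p$ is horizontal and acts as a scalar on the span of the $\nabla^il$), not a formal consequence of $G$-triviality of $N$, since it is not clear a priori that $G$ acts trivially on $N$. Once $N$ is shown to be trivial, one concludes that $G_{\gal}$ acts by a scalar on $W'$, hence stabilizes $L'$, hence equals $G$.
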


\begin{proof}
We follow the idea of the proof of Theorem 10.2 in \cite{K82}. See also \cite{A05}*{Prop.~3.2.2}.

By a theorem of Chevalley, there exists $W$ in $\langle M \rangle ^\otimes$ and a line $L'\subset W_{K(X)}$ 
such that $G$ is the intersection of $G_{\gal}$ with the stabilizer of $L'.$ Let $W'$ be the smallest $\nabla$-stable submodule of $W_{K(X)}$ containing $L'.$ Then $W'$ has a $K(X)$-basis of the form 
$\{l,\,\nabla l,\cdots,\,\nabla^{r-1} l\}$ where $l \in L',$ $r = \rk W',$ and we have written 
$\nabla^i l$ for $\nabla(\frac d {dx})^i(l).$ 
Replacing $W$ by $W'\cap W,$ we may assume that $W_{K(X)} = W'.$ Then $L = L'\cap W$ is a line bundle in $W.$ 

As above, let $\mathfrak{g}$ be the smallest algebraic Lie subalgebra of $\GL(M_{K(X)})$ such that for almost all 
$p$ the reduction of $\mathfrak{g}$ mod $p$ contains $\psi_p.$ 
Let $\Sigma$ be a finite set of primes of $\Q$ such that $(M,\nabla)$ extends to a vector bundle $\cM$ with connection $\nabla\colon\cM\rightarrow \cM\otimes \Omega_{X_{\cO_{K,\Sigma}}}$ over $X_{\cO_{K,\Sigma}},$ and $\mathfrak{g}$ mod $p$ contains $\psi_p$ for $p \notin \Sigma.$ We also assume that $\Sigma$ contains all primes $p \leq r.$ 

Let $U \subset X_{\cO_{K,\Sigma}}$ be a non-empty open subset such that $l \in L|_U,$ $L$ and $W$ extend 
to vector bundles with connection $\cL$ and $\cW$ respectively, in $\langle \cM|_U \rangle^\otimes,$ and $\{l,\,\nabla l,\cdots,\,\nabla^{r-1} l\}$ forms a basis of $\cW.$
Let $\cN\colonequals \Sym^r \cW \otimes (\det \cW^\vee)$ with the induced connection. The argument in \cite{K82} implies that for $p\notin \Sigma$, the $p$-curvature of $(\cN,\nabla)$ vanishes. 
Let $N\colonequals\cN_{X_K\cap U}$. We will use the condition (2) in the definition of $G$ to show that $G_w$ acts 
trivially on $N_{\eta_w}$. We already know this for $p \notin \Sigma,$ by Lemma \ref{lem_lmg}. Thus we will only need to use (2) for $p \in \Sigma.$ Assuming this for a moment, we can apply Theorem \ref{thm_str} to $(N,\nabla)$ and conclude that it has trivial global monodromy. Hence $G_{\gal}$ acts as a scalar on $W$. In particular, $G_{\gal}$ stabilizes $L$ so, by the definition of $L,$ $G_{\gal}=G,$ 

Let $D\colonequals D(x_0, p^{-\frac{1}{p(p-1)}}).$ 
Recall that the category $\langle M|_{D(x_0,r)}\rangle ^\otimes\otimes F_w$ 
is obtained from $\langle M|_{D(x_0,r)}\rangle ^\otimes$ by taking the same collection of objects 
and tensoring the morphisms by $F_w.$
By the definition of $L$, the group $G_w$ acts as a character $\chi$ on $L_{\eta_w}$. The morphism 
$L_{\eta_w}\rightarrow W_{\eta_w}$ is a map between $G_w$-representations. 
By the equivalence of categories between $\langle M|_{D(x_0,r)}\rangle ^\otimes\otimes F_w$
and the category of linear representations of $G_w$ over $F_w$, this morphism is a finite $F_w$-linear combination of 
maps $L|_D \rightarrow W_D$ in $\langle M|_{D(x_0,r)}\rangle ^\otimes.$ 
In other words, there are a finite number of $\nabla$-stable line bundles $W_i\subset W_D$,  with $G_w$ acting on $W_{i,\eta_w}$ as $\chi$ such that $L|_{D}\subset \sum W_i$. In particular, $l|_D=\sum a_i\cdot w_i$, where $a_i\in F_w$ and $w_i\in W_i$. Since $\sum W_i$ is $\nabla$-stable, $\nabla^n l\in \sum W_i$ and $G_w$ acts as $\chi$ on $\nabla ^n l|_D$. As $W_{\eta_w}$ is generated by $\{l,\,\nabla l,\cdots,\,\nabla^{r-1}l\}|_D$, the group $G_w$ acts as $\chi$ on 
$W_{\eta_w}$. Hence $G_w$ acts trivially on $N_{\eta_w}.$
\end{proof}

Using the same idea as in the last paragraph of the proof above, we have the following lemma which is of independent interest.

\begin{lemma} Let $H_w \subset G_{\gal}$ be the smallest closed subgroup such that 
$G_w \subset H_w\otimes_{K(X)} F_w.$
Then $H_w$ is normal in $G_{\gal}$.
\end{lemma}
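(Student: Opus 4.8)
I want to show that the smallest closed subgroup $H_w \subset G_{\gal}$ with $G_w \subset H_w \otimes_{K(X)} F_w$ is normal in $G_{\gal}$. By a theorem of Chevalley, it suffices to produce, for each line $L'$ in some object $W$ of $\langle M\rangle^\otimes$ whose $G_{\gal}$-stabilizer contains $H_w$, that the stabilizer of $L'$ in $G_{\gal}$ is actually \emph{all} of $G_{\gal}$ once we know $H_w$ stabilizes $L'$ — more precisely, the standard way to check normality Tannakially is: $H_w$ is normal iff for every object $V$ of $\langle M \rangle^\otimes$ and every $H_w$-stable subspace $V' \subset V_{\eta}$ that is \emph{not} $G_{\gal}$-stable, one derives a contradiction with the minimality of $H_w$. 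So first I would fix such a $W$ and a line $L' \subset W_{K(X)}$ stabilized by $H_w$, chosen (via Chevalley) so that $H_w$ is precisely $G_{\gal} \cap \mathrm{Stab}(L')$; my goal becomes to show $\mathrm{Stab}(L') \supset G_{\gal}$, i.e. $G_{\gal}$ itself stabilizes $L'$.

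**Key steps.** Replacing $W$ as in the proof of Theorem \ref{thm_fin}, I may assume $W_{K(X)} = W'$ has basis $\{l, \nabla l, \dots, \nabla^{r-1} l\}$ with $l \in L'$, $r = \rk W$, and $L = L' \cap W$ is a line bundle. Now transport to the disc $D = D(x_0, p^{-1/(p(p-1))})$ and work with $G_w = \Aut^\otimes \eta_w$. Since $H_w$ stabilizes $L'$ and $G_w \subset H_w \otimes F_w$, the group $G_w$ stabilizes $L'_{\eta_w} \subset W_{\eta_w}$ and acts on it through some character $\chi$. Then I repeat verbatim the argument of the final paragraph of the proof of Theorem \ref{thm_fin}: the inclusion $L'_{\eta_w} \hookrightarrow W_{\eta_w}$ is a map of $G_w$-representations, hence by the Tannakian equivalence it is an $F_w$-linear combination of honest morphisms $L|_D \to W_D$ in $\langle M|_D \rangle^\otimes$; so $l|_D = \sum a_i w_i$ with $w_i$ spanning $\nabla$-stable line bundles $W_i \subset W_D$ on which $G_w$ acts by $\chi$, and $\sum W_i$ is $\nabla$-stable. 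Then $\nabla^n l|_D \in \sum W_i$ for all $n$, and since $\{l, \dots, \nabla^{r-1} l\}$ generates $W_{\eta_w}$, the group $G_w$ acts on all of $W_{\eta_w}$ by the single character $\chi$. In particular $G_w$ stabilizes every line in $W_{\eta_w}$.

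**Conclusion from there.** The statement "$G_w$ acts by a character on $W_{\eta_w}$" is a property detected on the object $\Sym^r W \otimes \det W^\vee =: N$ of $\langle M \rangle^\otimes$, exactly as in the proof of Theorem \ref{thm_fin}: $G_w$ acts trivially on $N_{\eta_w}$. But also $\psi_p$ lies in $\Lie G_{\gal}$ and, by the Katz computation quoted above (using $p \notin \Sigma$, $p > r$), $\psi_p$ kills $\cN$; combined with condition (2) this gives that $G_w$ acts trivially on $N_{\eta_w}$ for \emph{all} $w$. Applying Theorem \ref{thm_str} to $(N,\nabla)$ forces trivial global monodromy on $N$, hence $G_{\gal}$ acts by a character on $W$, hence $G_{\gal}$ stabilizes $L' = $ its kernel-line. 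Therefore $\mathrm{Stab}_{G_{\gal}}(L') = G_{\gal}$, and since this stabilizer equals $H_w$, we get $H_w = G_{\gal}$, which is certainly normal. (More carefully: one should argue for a \emph{general} conjugate $gL'$ with $g \in G_{\gal}$; but $gL'$ is another line in $W_{K(X)}$ and the same argument — $G_w$ acts by a character on $W_{\eta_w}$ — shows $G_{\gal}$ stabilizes it too, so $gH_wg^{-1} = G_{\gal}\cap\mathrm{Stab}(gL') \supset \mathrm{Stab}(L') = H_w$; symmetry gives $gH_wg^{-1}=H_w$.)

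**Main obstacle.** The crux is the same one as in Theorem \ref{thm_fin}: arranging via Chevalley a \emph{single} pair $(W, L')$ that simultaneously (i) cuts out $H_w$ as a stabilizer and (ii) lets one build the $\nabla$-stable $W' = \mathrm{span}\{l, \nabla^i l\}$ so that the local statement "$G_w$ acts by a character" propagates from the line to all of $W_{\eta_w}$ — and then checking that the relevant auxiliary object $N$ has vanishing $p$-curvature away from $\Sigma$ so that Theorem \ref{thm_str} applies. All the moving parts already appear in the proof of Theorem \ref{thm_fin}; the only genuinely new point is tracking that the conjugation action of $G_{\gal}$ on the collection of such lines is absorbed, which follows because the conclusion "$G_{\gal}$ acts by a scalar on $W$" is conjugation-invariant. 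I expect no difficulty beyond bookkeeping once this is set up.
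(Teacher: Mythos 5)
Your strategy has a genuine gap, and it shows up in the conclusion: your argument, if it worked, would prove $H_w = G_{\gal}$, which is far stronger than normality and is false in general (for instance, when $(M,\nabla)$ has good reduction and a Frobenius structure at $v$, the group $G_w$ is trivial, so $H_w=\{1\}$ while $G_{\gal}$ need not be). The step that fails is the application of Theorem \ref{thm_fin}'s machinery to the auxiliary object $N=\Sym^r W\otimes \det W^\vee$. In that proof, the line $L'$ is cut out by the group $G$ of \ref{def_G}, which by construction contains $\psi_p$ in its Lie algebra mod $p$ for almost all $p$ \emph{and} contains $G_{w'}$ for \emph{every} place $w'$; both facts are needed to verify the hypotheses of Theorem \ref{thm_str} for $N$. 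Here your line $L'$ is cut out by $H_w$, which is defined by a condition at the \emph{single} place $w$. There is no reason $\psi_p$ should stabilize $L$ mod $p$ (so "$\psi_p$ kills $\cN$" is unjustified), and no reason $G_{w'}\subset H_w\otimes F_{w'}$ for $w'\neq w$ (so "condition (2)" is not available at other places). Hence Theorem \ref{thm_str} cannot be applied to $(N,\nabla)$, and the global conclusion does not follow.

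The paper's proof needs none of this global input. It invokes a purely group-theoretic normality criterion (\cite{A92}*{Lem.~1}): $H_w$ is normal in $G_{\gal}$ provided that for every tensor space $V^{m,n}$ and every character $\chi$ of $H_w$, the eigenspace $(V^{m,n})^\chi$ is $G_{\gal}$-stable. To check this, take a line $L\subset (V^{m,n})^\chi$ and let $W$ be the smallest $\nabla$-stable subspace containing $L$. The local propagation argument you correctly reproduce (the last paragraph of the proof of Theorem \ref{thm_fin}) shows $G_w$ acts via $\chi$ on all of $W_{\eta_w}$; then \emph{minimality of $H_w$} — the one ingredient your write-up never uses — forces $H_w$ to act via $\chi$ on $W$, since the subgroup of $G_{\gal}$ acting by a scalar on $W$ is a closed subgroup whose base change to $F_w$ contains $G_w$. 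Thus $(V^{m,n})^\chi$ is $\nabla$-stable, hence a subobject of $\langle M\rangle^\otimes$, hence $G_{\gal}$-stable, and the criterion applies. You should replace the Chevalley-line framing (which aims at the wrong statement) with this eigenspace criterion.
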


\begin{proof}
We need the following fact (see \cite{A92}*{Lem.~1}):
Assume that $G$ is a algebraic group over some field $E$. Let $H\subset G$ be a closed subgroup and $V$ an $E$-linear faithful algebraic representation of $G$. Then $H$ is a normal subgroup of $G$ if for every tensor space $V^{m,n}\colonequals V^{\otimes m}\otimes (V^\vee)^{\otimes n}$, and for every character $\chi$ of $H$ over $E$, $G$ stabilizes $(V^{m,n})^\chi$, the subspace of $V^{m,n}$ where $H$ acts as $\chi$. If $G$ is connected, then these two conditions are equivalent.

We apply this result to $H_w\subset G_{\gal}$ and $V=M_{K(X)}$. Let $L\subset {V^{m,n}}$ be a line, 
and $W\subset {V^{m,n}}$ the smallest $\nabla$-stable subspace containing $L.$ 
It suffices to show that, if $H_w$ acts via $\chi$ on $L$, then $H_w$ acts via $\chi$ on $W.$ 
This shows that $(V^{m,n})^\chi$ is $\nabla$-stable, and hence that $G_{\gal}$ stabilizes  $(V^{m,n})^\chi$.

As in the proof of the theorem above, $G_w$ acts on $W$ via $\chi.$ Hence $H_w$ is contained in the subgroup 
of $G_{\gal}$ which acts on $W$ via $\chi.$
\end{proof}

\section{Algebraicity: an application of Andr\'{e}'s theorem}\label{sec_alg}
The main goal of this section is to prove a weaker version of Theorem \ref{thm_str}. Namely, that if $(M,\nabla)$ is a vector bundle with a connection over $X_K=\mathbb{P}_K^{1}-\{0,\,1,\,\infty\}$ all of whose $p$-curvatures vanish, 
then $(M,\nabla)$ admits a full set of algebraic solutions. 

\subsection{Andr\'{e}'s algebraicity criterion.}
\begin{para}\label{taylorexp}
As the coordinate ring of $X_K$ a principal ideal domain, $M$ is free. 
Hence we may view $\nabla$ as a system of first-order homogeneous differential equations.
Thus $M\cong\mathcal{O}_{X_K}^{m}$ and $\nabla(\frac{d}{d x})\boldsymbol{y}=\frac{d\boldsymbol{y}}{dx}-A(x)\boldsymbol{y}$, where $\boldsymbol{y}$ is a section of $M$, $x$ is the coordinate
of $X$, and $A(x)$ is an $m\times m$ matrix with entries in $\mathcal{O}_{X_K}=K[x^{\pm},(x-1)^{\pm}]$.

As above, we set $x_{0}=\frac{1}{2}(1+\sqrt{3}i).$ If $\yy_0 \in L^m,$ there exists $\yy \in L[[x-x_0]]^m$ 
such that $\yy(x_0) = \yy_0$ and $\nabla(\yy) = 0.$
Our goal is to show that if the $p$-curvatures of $(M,\nabla)$ vanishes for all $p,$ then $\boldsymbol{y}$
is algebraic. 
\end{para}

\begin{para}
Now let $y\in K[[x]],$ and let $v$ be a place of $K$. 
If $v$ is finite, we denote by $p$ the characteristic of the residue field.
Let $|\cdot|_v$ be the $v$-adic norm normalized so that $|p|_v = p^{-\frac{[K_v:\Q_p]}{[K:\Q]}}$ 
if $v$ is finite, and $|x|_v = |x|_{\infty}^{-\frac{[K_v:\mathbb R]}{[K:\Q]}}$ for $x \in K,$ if $v$ is archimedean, where 
$|x|_{\infty}$ denotes the Euclidean norm on $K_v.$ When there is no confusion, we will also write $|\cdot|$ for $|\cdot|_\infty$.
For a positive real number $R,$ we denote by $D_v(0,R)$ the rigid analytic $z$-disc of $v$-adic radius $R.$  
That is $D_v(0,R)$ is defined by the inequality $|z|_v < R.$
\end{para}

We first state the definition of $v$-adic uniformization and the
associated radius $R_{v}$ defined in Andr\'{e}'s paper (\cite{A05}*{Definition 5.4.1}).
\begin{defn}\label{def_r} 
\leavevmode
\enumerate 
\item For $R \in \mathbb R^+,$ a \emph{$v$-adic uniformization} of $y$ by $D_v(0,R)$
is a pair of meromorphic $v$-adic functions $g(z),\, h(z)$ on $D_v(0,R)$
such that $h(0)=0,\, h'(0)=1$ and $y(h(z))$ is the germ at $0$
of the meromorphic function $g(z)$. 
\item Let $R_v$ be the supremum of the set of positive real $R$ for which a $v$-adic uniformization of $y$ by $D_v(0,R)$ exists. We call $R_v$ the $v$-adic \emph{radius (of uniformizability)}. 
\end{defn}

\begin{para}
In order to state the algebraicity criterion, we need to introduce two
constants $\tau(y),\,\rho(y)$, which play similar roles as the global-boundedness condition in the 
Borel--Dwork rationality criterion. Let $y=\sum_{n=0}^{\infty}a_{n}x^{n}$.
We define

$$\tau(y)=\inf_{l}\limsup_{n}\sum_{v,\, p\geq l}\frac{1}{n}\sup_{j\leq n}\log^{+}|a_{j}|_{v},$$

$$\rho(y)=\sum_{v}\limsup_{n}\frac{1}{n}\sup_{j\leq n}\log^{+}|a_{j}|_{v},$$
where $\log^{+}$ is the positive part of $\log$, that is $\log^{+}(a)=\log(a)$
if $a>1$ and is zero otherwise. 
The following is a slight reformulation of Andr\'e's criterion.
\end{para}

\begin{theorem}\label{andre}
(\cite{A05}*{Theorem 5.4.3}) Let $y\in K[[x]]$ such that $\tau(y)=0$ and $ \rho(y)<\infty$.
Let $R_{v}$ be the $v$-adic radius of $y$. If $\prod_{v}R_{v}>1$,
then $y$ is algebraic over $K(x)$.
\end{theorem}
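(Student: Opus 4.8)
The plan is to prove this along the lines of the Borel--Dwork rationality criterion and its refinements by P\'olya, Bertrandias and Dwork--Robba, with two upgrades: ``rationality detected by the vanishing of Hankel determinants'' becomes ``algebraicity detected by the vanishing of generalized Hankel (Wronskian-type) determinants,'' and ``meromorphic continuation to a $v$-adic disc of radius $R_v$'' becomes ``$v$-adic uniformizability by a disc of radius $R_v$.'' One argues by contradiction: assume $y$ is transcendental over $K(x)$ and produce a violation of the product formula. (An alternative packaging of the same estimates is via Bost's slope inequalities for Hermitian vector bundles, as in the work of Chambert-Loir and Bost--Chambert-Loir.)

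First I would record a determinantal criterion for algebraicity. For integers $M, d \ge 1$ put $D = (M+1)(d+1)$ and consider the $D$ power series $x^j y^i \in K[[x]]$ with $0 \le i \le d$, $0 \le j \le M$. If $y$ is transcendental these are linearly independent over $K$, so for all large $n$ the $(n+1)\times D$ matrix of their Taylor coefficients in degrees $\le n$ has rank $D$; fixing the minimal such $n$, one extracts a nonzero $D\times D$ minor $\Delta = \Delta_{M,d}\in K^{\times}$ whose entries are coefficients $[x^k](x^j y^i)$ with $k$ ranging over a size-$D$ subset of $\{0,\dots,n\}$ (equivalently, work with the Wronskian of the $x^j y^i$ and the order of its vanishing at the origin). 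Conversely, if $y$ were algebraic of degree $\le d$ with a defining polynomial of $x$-degree $\le M$, then $\sum_{i,j}c_{ij}x^j y^i = 0$ identically for some nonzero $(c_{ij})$, and every such minor would vanish. Hence it is enough to force $\Delta_{M,d}=0$ for one well-chosen pair $(M,d)$.

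The heart of the argument is to estimate $\log|\Delta_{M,d}|_v$ at every place $v$ and feed the result into the product formula $\sum_v\log|\Delta_{M,d}|_v=0$. At each $v$, fix $R < R_v$ and a $v$-adic uniformization: meromorphic $g_v, h_v$ on $D_v(0,R)$ with $h_v(0)=0$, $h_v'(0)=1$, $y\circ h_v = g_v$. Pulling the functions $x^j y^i$ back along $x = h_v(z)$ turns them into the meromorphic functions $h_v(z)^j g_v(z)^i$ on $D_v(0,R)$; since $h_v$ is tangent to the identity, this change of variables acts unipotently on the flag of Taylor truncations (it multiplies a Wronskian by a power of $h_v'$, which equals $1$ at $z=0$), so $\Delta_{M,d}$ is, up to sign and harmless factors, a generalized Hankel/Wronskian determinant of the $h_v^j g_v^i$. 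Clearing denominators and applying the maximum modulus principle --- with the usual Schwarz-type bookkeeping, each extra row of ``derivatives'' costing a factor of order $R^{-1}$ --- gives a bound $\log|\Delta_{M,d}|_v \le a(M,d)\,C_v - b(M,d)\log R$, where $a(M,d)$ is linear and $b(M,d)>0$ is quadratic in $(M,d)$; letting $R\uparrow R_v$ replaces $R$ by $R_v$. Here $C_v$ is controlled by $\sup_{\ell\le n}\log^{+}|a_\ell|_v$ at finite $v$, so $\sum_v a(M,d)\,C_v$ is governed by $\rho(y)<\infty$ at the finitely many small residue characteristics and, crucially, by $\tau(y)=0$ at the thin tail of large ones, rendering the total linear contribution negligible next to the quadratic one.

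Finally, summing over $v$ and invoking $\prod_v R_v>1$, the product formula yields $0 = \sum_v\log|\Delta_{M,d}|_v \le a(M,d)\sum_v C_v - b(M,d)\log\prod_v R_v$; for $(M,d)$ tending to infinity along a suitable path (and with the large-prime cutoff in $\tau(y)=0$ chosen after deciding how negligible the linear term must be) the quadratic term dominates, a contradiction. Hence $y$ is algebraic over $K(x)$. The main obstacle, where essentially all the real work lies, is the determinant estimate at the uniformized places: proving the maximum-modulus bound for generalized Wronskian determinants of meromorphic functions with the \emph{correct} quadratic exponent, uniformly over archimedean and non-archimedean $v$, while handling the poles of $g_v,h_v$ and the growth of the sup-norms of the $h_v^j$, and then calibrating $M$, $d$, $n$ and the cutoff so that the negative quadratic term genuinely wins once $\prod_v R_v>1$. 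A secondary subtlety is the bookkeeping that promotes a generic-rank statement to an honest nonzero minor lying in $K^{\times}$ with a controlled index set.
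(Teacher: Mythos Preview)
The paper does not prove this theorem; it is stated as a citation of Andr\'e's result \cite{A05}*{Theorem 5.4.3} and used as a black box throughout. There is thus nothing in the paper to compare your proposal against.

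That said, your sketch is a faithful high-level outline of Andr\'e's actual argument in \cite{A05}. The strategy there is indeed a diophantine approximation argument in the Borel--Dwork--P\'olya--Bertrandias tradition: one forms auxiliary determinants from the family $\{x^j y^i\}_{0\le i\le d,\,0\le j\le M}$, bounds them place by place via Schwarz-type lemmas pulled back through the uniformizations $h_v$, and derives a contradiction to the product formula once $\prod_v R_v>1$. The roles you assign to $\tau(y)=0$ and $\rho(y)<\infty$ --- controlling the linear coefficient-growth term so that the quadratic Schwarz gain dominates --- match Andr\'e's use of these hypotheses. Your identification of the main technical obstacle, namely the uniform determinant (Wronskian) estimate at the uniformized places with the correct quadratic exponent while handling the poles of $g_v$ and $h_v$, is accurate; this is precisely where the substantive work in \cite{A05} (and in the alternative Bost--Chambert-Loir slope-inequality packaging you mention) is concentrated. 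As a proof \emph{plan} your proposal is sound; turning it into a proof requires carrying out those estimates, which is the content of Andr\'e's paper rather than of the present one.
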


In general the $v$-adic radius $R_v$ may be infinity or zero. We refer the reader to Andr\'{e}'s paper for a precise definition of the infinite product in such situations. In our applications of this theorem, $R_v$ will always be non-zero.
We remark that we could have also used Thm.~6.1 and Prop.~5.15 of \cite{BCL08} in place of Andr\'e's Theorem.

Suppose that $y$ is a (component of a) formal solution of $(M,\nabla)$ 
as above. By \cite{A05}, Corollary 5.4.5, if the $p$-curvatures of $(M,\nabla)$ vanish for all places over a set of rational primes of density one then $\tau(y)=0$ and $\rho(y)<\infty$.  Hence, in order to prove that $y$ is the germ of an algebraic function, we only need to prove that $\prod_{v}R_{v}>1.$

\subsection{Estimate of the radii at archimedean places.}\label{sec_Rinf}
We begin with the following simple lemma.
\begin{lemma}\label{inf}
Suppose that $\phi\colon D(0,1)\rightarrow\mathbb{P}_{\mathbb{C}}^{1}-\{0,1,\infty\}$ is a holomorphic map such that $\phi(0)=x_{0}$. Then for any archimedean place $w$ of
the number field $L$ where the connection and the initial conditions $x_0$, $\boldsymbol{y}_0$
are defined, $R_{w}\geq|\phi'(0)|_w$.
\end{lemma}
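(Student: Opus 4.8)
The goal is to produce, from the holomorphic map $\phi$, a genuine archimedean uniformization of the component $y$ in the sense of Definition \ref{def_r}, whose radius is (morally) $|\phi'(0)|_w$. The idea is that $\phi$ pulls back the differential equation $(M,\nabla)$ to a differential equation on the unit disc $D(0,1)$, and on a disc every such equation (with holomorphic coefficients) has a full set of holomorphic solutions by the classical Cauchy existence theorem. So first I would set $\psi(z) = \phi(z) - x_0$, so that $\psi(0)=0$ and $\psi'(0)=\phi'(0)\neq 0$ (nonvanishing because $\phi$ maps into $\mathbb P^1_\C-\{0,1,\infty\}$ and $\phi(0)=x_0$ is a regular value of the equation — actually we just need $\phi'(0)\neq 0$, which we may assume; if $\phi'(0)=0$ the inequality $R_w\geq 0$ is vacuous). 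Rescale: set $h(z) = \psi(z/\lambda)$ where $\lambda = \phi'(0)$, so $h(0)=0$, $h'(0)=1$, and $h$ is holomorphic on the disc of radius $|\lambda|_\infty$ (in the Euclidean metric), i.e. on $D_w(0,|\phi'(0)|_w)$ after the normalization of $|\cdot|_w$ is taken into account.

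**Key steps.** (1) Pull back the connection $(M,\nabla)$ along $\phi$ to $D(0,1)$; since $\phi$ avoids $0,1,\infty$, the matrix $A(\phi(z))\phi'(z)$ has holomorphic entries on $D(0,1)$, so the pulled-back equation $\frac{d\mathbf u}{dz} = A(\phi(z))\phi'(z)\,\mathbf u$ has a holomorphic solution $\mathbf u(z)$ on $D(0,1)$ with $\mathbf u(0)=\mathbf y_0$, by the standard majorant/Cauchy argument (valid over $\C$). (2) By construction $\mathbf y(\phi(z)) = \mathbf u(z)$ as germs at $0$, since both solve the same equation with the same initial value and $\phi$ is a local isomorphism near $0$ (as $\phi'(0)\neq 0$). (3) Take the relevant component $y$ of $\mathbf y$; then $g(z) \colonequals u(z/\lambda)$, the corresponding component of $\mathbf u(z/\lambda)$, together with $h(z)=\psi(z/\lambda)$, is a pair of holomorphic (in particular meromorphic) functions on $D_w(0,|\phi'(0)|_w)$ with $h(0)=0$, $h'(0)=1$, and $y(h(z))$ equal to the germ of $g(z)$ at $0$. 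This is exactly a $w$-adic uniformization of $y$ by a disc of radius $|\phi'(0)|_w$, so $R_w \geq |\phi'(0)|_w$ by definition of $R_w$ as a supremum.

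**Main obstacle.** There is essentially no deep obstacle here — this is a soft lemma — but the one point requiring care is bookkeeping of the normalization of the archimedean absolute value $|\cdot|_w$ versus the Euclidean radius of the disc on which $h$ and $g$ are actually defined and holomorphic, and checking that "radius $R$ in the sense of Definition \ref{def_r}" matches "Euclidean radius $|\phi'(0)|_\infty$" after rescaling by $\lambda=\phi'(0)$. The other point to state carefully is that one only needs holomorphy (not that $g$ extends or is bounded): a uniformization in Andr\'e's sense only asks for meromorphic $g,h$ on the open disc, and holomorphic functions certainly qualify. One should also note that the choice of $x_0=\frac{1+\sqrt3 i}{2}$ and the fact that it and $\mathbf y_0$ are defined over $L$ guarantee that the germ $y$ we uniformize really is the one attached to $(M,\nabla)$ over $L$, so that this $R_w$ is the quantity entering Theorem \ref{andre}; no genuine difficulty, just consistency of setup.
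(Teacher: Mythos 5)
Your proof is correct and follows essentially the same route as the paper's: pull the formal solution back along $\phi$ to the unit disc, observe that the resulting linear ODE has holomorphic coefficients on the simply connected disc so the solution is holomorphic on all of $D(0,1)$, and then rescale by $\lambda=\phi'(0)$ to obtain a uniformization pair on a disc of radius $|\phi'(0)|_w$. The extra care you take with the normalization of $|\cdot|_w$ and the degenerate case $\phi'(0)=0$ is fine and does not change the argument.
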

\begin{proof} Let $z$ be the complex coordinate on $D(0,1).$ 
Consider the formal power series $\phi^{*}\boldsymbol{y}.$ 
The vector valued power series $\boldsymbol{g} = \phi^*\boldsymbol{y}$ is a formal solution of the differential
equations $\frac{d\boldsymbol{g}}{dz}=(\phi'(z))^{-1}A(\phi(z))\boldsymbol{g}$
which is associated to the vector bundle with connection $(\phi^{*}M,\phi^{*}\nabla)$.
Since $D(0,1)$ is simply connected, $\boldsymbol{g}$ arises from a vector 
valued holomorphic function on $D(0,1)$ which we again denote by $\boldsymbol{g}.$

Let $t = \phi'(0)z,$ and set $R = |\phi'(0)|_{\infty}.$ 
Then we may identify $D(0,1)$ with the $t$-disc $D(0,R) = D_w(0,|\phi'(0)|_w)$ and the map $\phi$ with a map 
$$\tilde\phi: D(0,R) \rightarrow \mathbb{P}_{\mathbb{C}}^{1}-\{0,1,\infty\}$$
which satisfies $\tilde\phi'(0) = 1.$ By the definition of $R_{w}$, we have $R_{w}\geq |\phi'(0)|_w.$
\end{proof}

\begin{para} Given $x_0$, the upper bound (in terms of $x_0$) of $|\phi'(0)|$ for all such $\phi$ in the above lemma has been studied by Landau and other people. Based on the work of Landau and Schottky, Hempel gave an explicit upper bound (see \cite{H}*{Thm. 4}) that can be reached when $x_0=\frac{-1+\sqrt{3}i}{2}$. For the completeness of our paper, we give some details on the computation of $|\phi'(0)|$. 
\end{para}

\begin{para} We recall the definition of $\theta$-functions and their classical relation with the uniformization of $\mathbb{P}^1_{\C}-\{0,1,\infty\}$. Following the notation of \cite{I68} and \cite{I64}, let $$\theta_{00}(t)=\sum_{n\in\mathbb{Z}}\exp(\pi in^{2}t),\,\theta_{01}(t)=\sum_{n\in\mathbb{Z}}\exp(\pi i(n^{2}t+n)), \theta_{10}(t)=\sum_{n\in\mathbb{Z}}\exp(\pi i(n+\frac{1}{2})^{2}t)$$
These series converge pointwise to holomorphic functions on $\mathcal{H},$ which we denote by the same symbols.  
\end{para}

\begin{lemma}\label{lem_theta} (\cite{I64}*{p.~243})
These holomorphic functions $\theta_{00}^4, \theta_{01}^4, \theta_{10}^4$ are modular forms of weight $2$ and level 
$\Gamma(2)$.  Moreover, there is an isomorphism from the ring of modular forms of level $\Gamma(2)$ to $\C[X,Y,Z]/(X-Y-Z)$ given by sending $\theta_{00}^4, \theta_{01}^4$ and $\theta_{10}$ to $X,Y$ and $Z$ respectively.
\end{lemma}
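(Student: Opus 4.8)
The plan is to read off the automorphy properties of the three theta constants from the classical Jacobi transformation formulas, and then to identify the graded ring of modular forms of level $\Gamma(2)$ by a Hilbert series comparison. First I would record the behaviour of $\theta_{00},\theta_{01},\theta_{10}$ under the generators $T\colon t\mapsto t+1$ and $S\colon t\mapsto -1/t$ of $\mathrm{SL}_2(\Z)$: under $T$ the pair $(\theta_{00},\theta_{01})$ is interchanged and $\theta_{10}$ is multiplied by $e^{\pi i/4}$, while $\theta_{00}(-1/t)=\sqrt{-it}\,\theta_{00}(t)$, $\theta_{01}(-1/t)=\sqrt{-it}\,\theta_{10}(t)$ and $\theta_{10}(-1/t)=\sqrt{-it}\,\theta_{01}(t)$. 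Passing to fourth powers kills all the eighth roots of unity, so $\mathrm{SL}_2(\Z)$ acts on the triple $(\theta_{00}^4,\theta_{01}^4,\theta_{10}^4)$ through the quotient $\mathrm{SL}_2(\Z)/\Gamma(2)\cong S_3$ by permutations, with the weight-$2$ automorphy factor $(ct+d)^2$.

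Since $\overline{\Gamma(2)}$ is generated by $T^2$ and $ST^2S^{-1}$, and each of these acts by the trivial permutation with automorphy factor $(ct+d)^2$, the functions $\theta_{00}^4,\theta_{01}^4,\theta_{10}^4$ are each invariant of weight $2$ for $\Gamma(2)$. Holomorphy is then checked at the three cusps of $\Gamma(2)$: with $q=e^{\pi it}$ one has $\theta_{00}^4=1+O(q)$, $\theta_{01}^4=1+O(q)$ and $\theta_{10}^4=16q+O(q^2)$ at $\infty$, and since the $S_3$-action permutes the cusps compatibly with the permutation of the triple, holomorphy at $\infty$ forces holomorphy at the cusps $0$ and $1$ as well. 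This proves the first assertion, and the relation $X-Y-Z=0$ is exactly Jacobi's quartic identity $\theta_{00}^4=\theta_{01}^4+\theta_{10}^4$, which can be obtained by a classical theta computation or, more cheaply, by noting that both sides lie in the two-dimensional space $M_2(\Gamma(2))$ and have the same first two $q$-coefficients.

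Hence the assignment $X\mapsto\theta_{00}^4$, $Y\mapsto\theta_{01}^4$, $Z\mapsto\theta_{10}^4$ defines a graded ring homomorphism $\C[X,Y,Z]/(X-Y-Z)\to M_*(\Gamma(2))$, and it remains to show it is an isomorphism; the stated isomorphism is then its inverse. Eliminating $X$, the source is a polynomial ring $\C[Y,Z]$ with $Y,Z$ in weight $2$, of Hilbert series $\sum_{j\ge0}(j+1)t^{2j}$. On the target side, $-I\in\Gamma(2)$ kills all odd weights, and the standard dimension formula for the modular curve $X(2)$ (genus $0$, three cusps, no elliptic points) gives $\dim M_{2j}(\Gamma(2))=j+1$ for all $j\ge0$, so $M_*(\Gamma(2))$ has the same Hilbert series. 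The homomorphism is injective because $\theta_{01}^4$ and $\theta_{10}^4$ are algebraically independent: their ratio $\theta_{10}^4/\theta_{01}^4$ is a non-constant modular function for $\Gamma(2)$ (a M\"obius transform of the modular $\lambda$-function), and a homogeneous polynomial relation would force this ratio to be constant. An injective homomorphism of graded integral domains which is bijective in each graded degree by the Hilbert series count is an isomorphism, which completes the proof.

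The main obstacle is the bookkeeping in the first step: one must track the square-root automorphy factor and the eighth roots of unity in the Jacobi formulas carefully enough to be sure that, after raising to the fourth power, all residual signs cancel and the forms are $\Gamma(2)$-invariant of weight exactly $2$, rather than merely invariant up to a quadratic character. Once this is pinned down, the remaining inputs --- Jacobi's identity, the dimension count for $X(2)$, and the Hilbert series argument --- are routine; alternatively, since the statement is classical and due to Igusa, one may shortcut much of the first step by quoting the transformation formulas and the modularity directly from \cite{I68}.
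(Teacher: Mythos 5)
The paper offers no proof of this lemma at all: it is quoted as a classical fact with a citation to Igusa, and nothing in the rest of the paper depends on how it is proved. Your argument is the standard textbook derivation and is essentially correct: Jacobi's transformation formulas give the automorphy, Jacobi's quartic identity gives the relation $X=Y+Z$, and the dimension count $\dim M_{2j}(\Gamma(2))=j+1$ (genus $0$, three cusps, no elliptic points) combined with algebraic independence of two of the theta fourth powers identifies the ring. One point deserves sharpening, and it is exactly the one you flag at the end: it is not true that $\mathrm{SL}_2(\Z)$ acts on the triple $(\theta_{00}^4,\theta_{01}^4,\theta_{10}^4)$ in weight $2$ purely by permutations, because $(\sqrt{-it})^4=(-it)^2=-t^2$, so $S$ acts by a \emph{signed} permutation ($\theta_{00}^4|_2S=-\theta_{00}^4$, etc.). The action on the triple is through signed permutations, and what saves the statement is that on the generators $T^2$ and $ST^2S^{-1}$ of $\overline{\Gamma(2)}$ the two signs coming from the two occurrences of $S$ cancel (as one checks directly: $\theta_{00}^4|_2S=-\theta_{00}^4$ and $|_2S^{-1}$ introduces the compensating sign), so each $\theta^4$ is genuinely invariant of weight $2$ for $\Gamma(2)$ with trivial character. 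With that verification written out, your proof is complete; the remaining steps (the $q$-expansion check of Jacobi's identity in the two-dimensional space $M_2(\Gamma(2))$, the Hilbert series comparison, and injectivity via non-constancy of $\theta_{10}^4/\theta_{01}^4$) are all sound. Note also that the statement in the paper contains a typo --- the third generator should be $\theta_{10}^4$, not $\theta_{10}$ --- which your proof implicitly corrects.
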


We need the following basic facts mentioned in \cite{I68}*{p. 180} and \cite{I64}*{p.~244} in this section and section \ref{height}:
\begin{lemma}\label{lem_deri}
\leavevmode
\begin{enumerate}
\item
Let $\eta$ be the Dedekind eta function defined by $\eta=q^{1/24}\prod(1-q^n)$, where $q=e^{2\pi i t}$. We have $2^8\eta^{24}=(\theta_{00}\theta_{01}\theta_{10})^8$. In particular, the holomorphic functions $\theta_{00},\theta_{01},\theta_{10}$ are everywhere nonzero on the upper half plane.
\item
The derivative $\lambda'(t_0)=\pi i (\frac{\theta_{00}(t_0)\theta_{10}(t_0)}{\theta_{01}(t_0)})^4$. 
\item 
The holomorphic function $\frac{1}{2}(\theta_{00}^8+\theta_{01}^8+\theta_{10}^8)$ is the weight $4$ Eisenstein form of level $\SL_2(\Z)$ with constant term $1$ in its Fourier expansion; the holomorphic function $\frac{1}{2}(\theta_{00}^4+\theta_{01}^4)(\theta_{00}^4+\theta_{10}^4)(\theta_{01}^4-\theta_{10}^4)$ is the weight $6$ Eisenstein form of level $\SL_2(\Z)$ with constant term $1$ in its Fourier expansion.
\end{enumerate}
\end{lemma}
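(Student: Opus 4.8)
\emph{Plan.} All three assertions are classical identities in the theory of theta functions (as the statement notes, they are recorded in \cite{I64} and \cite{I68}); the plan is to recover each of them from the Jacobi triple product formula together with the structure of the rings of modular forms for $\Gamma(2)$ and $\SL_2(\Z)$. For (1), I would begin from the product expansions furnished by the Jacobi triple product identity. Writing $\widetilde q=e^{\pi i t}$ (so that $q=\widetilde q^{\,2}$), these read
\begin{gather*}
\theta_{00}=\prod_{n\ge1}(1-\widetilde q^{\,2n})(1+\widetilde q^{\,2n-1})^{2},\qquad
\theta_{01}=\prod_{n\ge1}(1-\widetilde q^{\,2n})(1-\widetilde q^{\,2n-1})^{2},\\
\theta_{10}=2\widetilde q^{\,1/4}\prod_{n\ge1}(1-\widetilde q^{\,2n})(1+\widetilde q^{\,2n})^{2}.
\end{gather*}
Multiplying and simplifying the infinite products --- using $(1+\widetilde q^{\,2n-1})(1-\widetilde q^{\,2n-1})=1-\widetilde q^{\,4n-2}$, $(1-\widetilde q^{\,2n})(1+\widetilde q^{\,2n})=1-\widetilde q^{\,4n}$, and $\prod_n(1-\widetilde q^{\,4n})(1-\widetilde q^{\,4n-2})=\prod_n(1-\widetilde q^{\,2n})$ --- collapses the product to $\theta_{00}\theta_{01}\theta_{10}=2\widetilde q^{\,1/4}\prod_{n\ge1}(1-\widetilde q^{\,2n})^{3}=2\eta^{3}$, since $\eta=\widetilde q^{\,1/12}\prod(1-\widetilde q^{\,2n})$; raising to the eighth power gives $2^{8}\eta^{24}$. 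As $|q|<1$ on $\mathcal H$, the product $\prod(1-q^{n})$ converges to a nonzero value and $q^{1/24}\ne 0$, so $\eta$ --- and therefore each of $\theta_{00},\theta_{01},\theta_{10}$ --- is nowhere vanishing on $\mathcal H$.

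For (2), recall that in the normalization of $\lambda$ in force here one has $\lambda=\theta_{00}^{4}/\theta_{01}^{4}$; by Jacobi's relation $\theta_{00}^{4}=\theta_{01}^{4}+\theta_{10}^{4}$ (Lemma \ref{lem_theta}) this differs by the constant $1$ from $\theta_{10}^{4}/\theta_{01}^{4}$, so the precise choice does not affect $\lambda'$. Since $\lambda$ is $\Gamma(2)$-invariant, $\lambda'$ is a modular form of weight $2$ for $\Gamma(2)$, holomorphic on $\mathcal H$ and at the three cusps; the function $\pi i(\theta_{00}\theta_{10}/\theta_{01})^{4}$ is likewise a weight-$2$ form for $\Gamma(2)$ and is nowhere vanishing on $\mathcal H$ by (1). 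Hence the quotient $\lambda'/\bigl(\pi i(\theta_{00}\theta_{10}/\theta_{01})^{4}\bigr)$ is a holomorphic function on the compact modular curve $X(2)\cong\bP^{1}$, so it is a constant, and comparing leading coefficients of the $q$-expansions at the cusp $i\infty$ (where both $\lambda'$ and $\pi i\,\theta_{00}^{4}\theta_{10}^{4}/\theta_{01}^{4}$ equal $16\pi i\,\widetilde q+O(\widetilde q^{\,2})$) identifies this constant as $1$. Alternatively, one can compute $\lambda'/\lambda=4(\theta_{00}'/\theta_{00}-\theta_{01}'/\theta_{01})$ directly from the product expansions above and the classical logarithmic-derivative formulas for the $\theta_{ab}$.

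For (3), I would show directly that the two displayed expressions are modular forms of weights $4$ and $6$ for $\SL_2(\Z)$ and then invoke the one-dimensionality of $M_4(\SL_2(\Z))$ and $M_6(\SL_2(\Z))$. Using the transformation laws $\theta_{00}(t+1)=\theta_{01}(t)$, $\theta_{01}(t+1)=\theta_{00}(t)$, $\theta_{10}(t+1)=e^{\pi i/4}\theta_{10}(t)$ and $\theta_{00}(-1/t)=\sqrt{-it}\,\theta_{00}(t)$, $\theta_{01}(-1/t)=\sqrt{-it}\,\theta_{10}(t)$, $\theta_{10}(-1/t)=\sqrt{-it}\,\theta_{01}(t)$, a short computation shows that $\tfrac12(\theta_{00}^{8}+\theta_{01}^{8}+\theta_{10}^{8})$ is invariant under $t\mapsto t+1$ and is multiplied by $t^{4}$ under $t\mapsto-1/t$, hence is a weight-$4$ form for $\SL_2(\Z)=\langle S,T\rangle$; similarly $\tfrac12(\theta_{00}^{4}+\theta_{01}^{4})(\theta_{00}^{4}+\theta_{10}^{4})(\theta_{01}^{4}-\theta_{10}^{4})$ is invariant under $t\mapsto t+1$ and scales by $t^{6}$ under $t\mapsto-1/t$, hence is a weight-$6$ form. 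Both are holomorphic at the cusp since $\theta_{00},\theta_{01}\to 1$ and $\theta_{10}\to 0$ as $\operatorname{Im}t\to\infty$, and their constant terms are $\tfrac12(1+1+0)=1$ and $\tfrac12\cdot 2\cdot 1\cdot 1=1$. Since $M_4(\SL_2(\Z))$ and $M_6(\SL_2(\Z))$ are spanned by $E_4$ and $E_6$ (normalized to constant term $1$), the two expressions equal $E_4$ and $E_6$ respectively.

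There is no conceptual obstacle here; the only points demanding care are the bookkeeping of the automorphy factors --- the $\sqrt{-it}$ and the primitive eighth root of unity --- in part (3), so that the weights and multiplier systems come out exactly right, and, in part (2), the verification that $\lambda'$ extends holomorphically over the cusps of $X(2)$ (equivalently, that $\lambda$ is a uniformizer there). Both are settled by writing down the relevant $q$-expansions.
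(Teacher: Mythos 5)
Your argument is correct in substance, and it is worth noting that it does something the paper does not: the paper offers no proof of this lemma at all, simply recording these identities as "basic facts" with a pointer to Igusa (\cite{I68}*{p.~180}, \cite{I64}*{p.~244}). Your derivation of (1) from the Jacobi triple product is the standard one and is complete; your treatment of (3) via the $S$- and $T$-transformation laws together with $\dim M_4(\SL_2(\Z))=\dim M_6(\SL_2(\Z))=1$ is also correct (I checked that the sign bookkeeping under $S$ works out: the factor $(\theta_{01}^4-\theta_{10}^4)$ picks up an extra $-1$ that exactly cancels $(-i)^6=-1$), and the constant terms $1+240q+\cdots$ and $1-504q+\cdots$ do come out right. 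What your self-contained argument buys is independence from the cited normalizations in Igusa, which matters here because the paper's $\lambda=\theta_{00}^4/\theta_{01}^4$ is not the most common convention; your remark that Jacobi's relation makes $\lambda'$ insensitive to the additive constant is exactly the right point.

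One inaccuracy in part (2) should be corrected. You assert that $\lambda'$ and $\pi i(\theta_{00}\theta_{10}/\theta_{01})^4$ are weight-$2$ forms for $\Gamma(2)$ "holomorphic on $\mathcal H$ and at the three cusps," so that their quotient is holomorphic on $X(2)\cong\bP^1$. This is not quite true: with the normalization $\lambda=\theta_{00}^4/\theta_{01}^4$, the function $\lambda$ has a simple pole at the cusp $0$ (where $\theta_{01}$ vanishes), so both $\lambda'$ and $\theta_{00}^4\theta_{10}^4/\theta_{01}^4$ acquire poles there; neither is a holomorphic modular form. The argument survives, but one must phrase it as: the quotient is a $\Gamma(2)$-invariant meromorphic function on $X(2)$, holomorphic and nonvanishing on $\mathcal H$ (the denominator by (1), the numerator because $\lambda$ is a covering map), so its divisor is supported on the three cusps and has degree zero; checking that its order of vanishing is $0$ at two of the cusps (at $i\infty$ both sides are $16\pi i\widetilde q+O(\widetilde q^{\,2})$, and a similar computation at one more cusp) forces the order at the third to vanish as well, whence the quotient is constant, equal to $1$. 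Alternatively, your fallback via logarithmic derivatives of the product expansions avoids the cusp analysis entirely and is the cleaner route if you want to keep the proof purely computational.
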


\begin{para}\label{lambda}
 Let $\lambda=\frac{\theta_{00}^{4}(t)}{\theta_{01}^{4}(t)}\colon\mathcal{H}\rightarrow \mathbb{P}^1(\mathbb{C})$ and $t_0=\frac{1}{2}(-1+\sqrt{3}i)$. Then $\lambda:\cH\rightarrow \bP^1(\C)-\{0,1,\infty\}$ is a covering map with $\Gamma(2)$ as the deck transformation group (\cite{KCh}, VII, \S 7). 
In particular, the projective curve defined by $v^{2}=u(u-1)(u-\lambda(t))$ is an elliptic curve. Moreover, it is isomorphic to the elliptic curve $\C/(\Z+t\Z)$ (see \emph{loc. cit.}).
\end{para}

\begin{lemma}\label{lem_t0}  The map $\lambda$ sends $t_0$ to $x_0$.
\end{lemma}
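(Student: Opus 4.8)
The plan is to evaluate the modular lambda function at the specific CM point $t_0 = \frac{1}{2}(-1+\sqrt{3}i)$ and verify the claimed identity $\lambda(t_0) = x_0 = \frac{1}{2}(1+\sqrt{3}i)$. The key input is the relation between $\lambda$ and the $j$-invariant of the elliptic curve $\C/(\Z + t\Z)$: by \ref{lambda}, the curve $v^2 = u(u-1)(u-\lambda(t))$ is isomorphic to $\C/(\Z + t\Z)$, so $\lambda(t_0)$ is a value of $\lambda$ on the elliptic curve with $\tau = t_0$. Since $t_0$ is equivalent to $\zeta_6 = e^{\pi i/3}$ under $\SL_2(\Z)$ (indeed $t_0 = \zeta_3 = \zeta_6 - 1$ after translation, or one notes $t_0$ has the same stabilizer of order $6$), the underlying elliptic curve has $j$-invariant $0$, i.e. it is the curve with complex multiplication by $\Z[\zeta_3]$.

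The first step is therefore to recall the formula expressing $j$ in terms of $\lambda$, namely
$$ j = 2^8 \frac{(\lambda^2 - \lambda + 1)^3}{\lambda^2(\lambda-1)^2}, $$
and to solve $j(\lambda) = 0$, which forces $\lambda^2 - \lambda + 1 = 0$, i.e. $\lambda = \frac{1 \pm \sqrt{3}i}{2} = -\zeta_6^{\mp 1}$, a primitive sixth root of unity. This already pins down $\lambda(t_0)$ to one of the two values $\frac{1 + \sqrt{3}i}{2}$ or $\frac{1 - \sqrt{3}i}{2}$, so the remaining work is to determine which of the two it is. The second step handles this: one computes the leading terms of the $q$-expansion of $\lambda(t) = \theta_{00}^4/\theta_{01}^4 = 16 q^{1/2}(1 + O(q^{1/2}))$ near $t = i\infty$, or — more to the point, since $t_0$ is not near the cusp — one uses the value of $\lambda$ at a nearby reference point together with analytic continuation, or simply evaluates $\theta_{00}(t_0)$ and $\theta_{01}(t_0)$ numerically (or via the Chowla--Selberg formula, invoked elsewhere in the paper) to high enough precision to distinguish $\frac{1+\sqrt{3}i}{2}$ from its complex conjugate. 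A cleaner route: note $\mathrm{Im}\,\lambda(t_0)$ has a definite sign determined by the orientation of the covering $\lambda\colon \mathcal{H}\to \bP^1(\C)-\{0,1,\infty\}$, and check this sign against $\mathrm{Im}\, x_0 = \frac{\sqrt{3}}{2} > 0$.

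I expect the main obstacle to be precisely this sign/branch determination: solving $j(\lambda)=0$ is immediate, but confirming that $\lambda(t_0)$ is the root in the \emph{upper} half plane (rather than its conjugate) requires either a numerical computation of the theta constants at $t_0$, or a careful bookkeeping of the correspondence between the standard fundamental domain for $\Gamma(2)$ and the three punctures $0,1,\infty$ — tracking which edge/vertex of the $\Gamma(2)$-domain maps to which puncture, and hence where the order-$6$ elliptic point of $\SL_2(\Z)$ (which lies over two points of the $\Gamma(2)$-quotient exchanged by an involution) is sent. Once that orientation is fixed, the identity $\lambda(t_0) = x_0$ follows. Alternatively, one can avoid branches entirely by exhibiting an explicit element of $\Gamma(2)\backslash\SL_2(\Z)$ (a coset representative, say the image of $\binom{1\ 1}{0\ 1}$ or $\binom{0\ -1}{1\ 0}$) carrying $t_0$ to a point where $\lambda$ is manifestly $\frac{1+\sqrt 3 i}{2}$, using the known action of $S_3 = \SL_2(\Z)/\Gamma(2)$ on $\lambda$ by the six Möbius transformations $\lambda \mapsto \lambda, 1-\lambda, 1/\lambda, \ldots$ permuting $\{0,1,\infty\}$; this reduces the whole lemma to the observation that the six-element orbit of $\frac{1+\sqrt3 i}{2}$ under that $S_3$-action consists of just the two primitive sixth roots of unity, each with multiplicity three, matching the ramification of $\lambda$ over $j=0$.
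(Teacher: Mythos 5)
Your proposal is correct and follows essentially the same route as the paper: the paper first uses the order-$6$ automorphism group of $\C/(\Z+t_0\Z)$ together with $j=2^8(\lambda^2-\lambda+1)^3/\lambda^2(\lambda-1)^2=0$ to pin $\lambda(t_0)$ down to $\frac{1\pm\sqrt{3}i}{2}$, exactly as you do. For the remaining branch determination, the paper settles it by the route you list first — inspecting the theta series directly at $t_0$ to see that $\Im\lambda(t_0)>0$ — so no further machinery (coset representatives, $q$-expansions at the cusp) is needed.
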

\begin{proof} Since the automorphism group of the lattice $\Z+t_0\Z$, hence that of the elliptic curve $\C/(\Z+t_0\Z)$ is of order $6$, the automorphism group of the elliptic curve $v^{2}=u(u-1)(u-\lambda(t_0))$ must also be of order $6$. In particular, $\lambda$ must send $t_0$ to either $\frac{1}{2}(1+\sqrt{3}i)$ or $\frac{1}{2}(1-\sqrt{3}i)$ (the roots of $0=j(t_0)=2^8\frac{(\lambda(t_0)^2-\lambda(t_0)+1)^3}{\lambda(t_0)^2(\lambda(t_0)-1)^2}$). Moreover, from the definition of $\theta$, we can easily see that $\lambda(t_0)$ has positive imaginary part.
\end{proof}

\begin{proposition}\label{Rinf} Let $y$ be a component of
the formal solution of the differential equations. Then $R_{w}^{\frac{[L:\Q]}{[L_w:\R]}}\geq\frac{3\Gamma(1/3)^{6}}{2^{8/3}\pi^{3}}=5.632\cdots$.
\end{proposition}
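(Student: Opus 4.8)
By Lemma \ref{inf}, it suffices to produce a holomorphic map $\phi\colon D(0,1)\to\mathbb{P}^1_\mathbb{C}-\{0,1,\infty\}$ with $\phi(0)=x_0$ and $|\phi'(0)|_\infty$ as large as possible (at an archimedean place $w$ one has $R_w^{[L:\Q]/[L_w:\R]}\geq|\phi'(0)|_\infty$ after the normalization of $|\cdot|_w$ is unwound). The natural candidate is the inverse of the covering $\lambda\colon\mathcal{H}\to\mathbb{P}^1(\mathbb{C})-\{0,1,\infty\}$, composed with an isomorphism of $D(0,1)$ with $\mathcal{H}$. Concretely, pick the biholomorphism $\psi\colon D(0,1)\xrightarrow{\sim}\mathcal{H}$ sending $0\mapsto t_0=\frac{1}{2}(-1+\sqrt{3}i)$, and set $\phi=\lambda\circ\psi$; by Lemma \ref{lem_t0}, $\phi(0)=\lambda(t_0)=x_0$. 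Then $|\phi'(0)|_\infty=|\lambda'(t_0)|_\infty\cdot|\psi'(0)|_\infty$, and the standard extremal choice of $\psi$ (the one whose derivative at $0$ has the largest possible modulus, i.e. the Riemann map normalized by its value and derivative at $t_0$) gives $|\psi'(0)|_\infty=2\,\mathrm{Im}(t_0)=\sqrt{3}$. So the whole computation reduces to evaluating $|\lambda'(t_0)|_\infty$.

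For $|\lambda'(t_0)|$ I would use Lemma \ref{lem_deri}(2): $\lambda'(t_0)=\pi i\bigl(\theta_{00}(t_0)\theta_{10}(t_0)/\theta_{01}(t_0)\bigr)^4$. Hence $|\lambda'(t_0)|=\pi\,|\theta_{00}(t_0)\theta_{10}(t_0)|^4/|\theta_{01}(t_0)|^4$. At the special point $t_0$ (equivalently, using the $\mathrm{SL}_2(\mathbb{Z})$-equivalent point $i$ or $\rho$, whichever makes the theta values cleanest), the three theta-null values are computed classically in terms of $\Gamma(1/4)$ or $\Gamma(1/3)$ via the Chowla–Selberg formula; since the target is a power of $\Gamma(1/3)$, the relevant input is the value of $\eta$ (equivalently of $\theta_{00}\theta_{01}\theta_{10}$, by Lemma \ref{lem_deri}(1)) at the CM point of discriminant $-3$. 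Combining the Chowla–Selberg evaluation of $\eta$ at $\rho=\frac{1}{2}(1+\sqrt{3}i)$ (or $t_0$) with the relation $2^8\eta^{24}=(\theta_{00}\theta_{01}\theta_{10})^8$ and one further relation among the theta-nulls at this CM point (which is forced because $\lambda(t_0)=x_0=\frac{1}{2}(1+\sqrt{3}i)$ satisfies the explicit algebraic relation $\lambda^2-\lambda+1$ related equations from Lemma \ref{lem_t0}), one solves for $|\theta_{00}\theta_{10}|^4/|\theta_{01}|^4$ individually. Multiplying by $\pi$ and by the factor $\sqrt{3}$ from $\psi'(0)$, and keeping track of the normalization that converts $|\phi'(0)|_\infty$ into $R_w^{[L:\Q]/[L_w:\R]}$, yields exactly $\dfrac{3\,\Gamma(1/3)^6}{2^{8/3}\pi^3}$.

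The main obstacle is the explicit evaluation of the theta-nulls at the discriminant $-3$ CM point and the bookkeeping of the Chowla–Selberg formula: one must pin down not just the product $\theta_{00}\theta_{01}\theta_{10}$ (which Chowla–Selberg gives directly through $\eta$) but the particular ratio $(\theta_{00}\theta_{10}/\theta_{01})^4$, which requires identifying which of the six cusps/ordering conventions corresponds to $\lambda(t_0)=x_0$ rather than to $\bar{x}_0$ — this is where Lemma \ref{lem_t0} and the positivity of $\mathrm{Im}\,\lambda(t_0)$ are used. I expect the rest (the Riemann-mapping derivative $|\psi'(0)|=\sqrt 3$, the factor of $\pi$, and collecting powers of $2$) to be routine. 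A sanity check is that the resulting constant agrees with Hempel's sharp Landau–Schottky bound at $x_0=\frac{-1+\sqrt3 i}{2}$ mentioned before Lemma \ref{lem_theta}, and the decimal $5.632\cdots$ should come out on the nose; in particular $\prod_v R_v>1$ will then follow once the non-archimedean radii are bounded below in the next section.
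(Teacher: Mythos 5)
Your proposal follows essentially the same route as the paper: apply Lemma \ref{inf} to $\lambda\circ\psi$ with $|\psi'(0)|=2\,\Im(t_0)=\sqrt{3}$, and evaluate $|\lambda'(t_0)|$ via Lemma \ref{lem_deri}(2) and the Chowla--Selberg formula. The ``main obstacle'' you flag --- extracting the individual ratio $|\theta_{00}(t_0)\theta_{10}(t_0)/\theta_{01}(t_0)|^4$ rather than just the product $\theta_{00}\theta_{01}\theta_{10}$ --- is resolved by a one-line observation: since $\lambda(t_0)=x_0$ satisfies $|x_0|=|1-x_0|=1$, and $\lambda=\theta_{00}^4/\theta_{01}^4$, $1-\lambda=\theta_{10}^4/\theta_{01}^4$, all three theta-nulls have equal absolute value at $t_0$, so the ratio collapses to $|\theta_{00}(t_0)|^4=|2^8\eta^{24}(t_0)|^{1/6}$ and Chowla--Selberg applied to $\eta$ finishes the computation exactly as you outline.
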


\begin{proof} 
Consider the map $\lambda\circ \alpha \colon D(0,1)\rightarrow X_{\mathbb{C}}$,
where $\alpha \colon D(0,1)\rightarrow\mathcal{H}$ is a
holomorphic isomorphism such that $\alpha(0)=t_0$, that is, $\alpha \colon z\mapsto-\frac{1}{2}+\frac{\sqrt{3}i}{2}\frac{z+1}{1-z}$.
We would like to apply Lemma \ref{inf} to the map $\lambda\circ\alpha$, which maps $0\in D(0,1)$ to $x_0$ since $\lambda(t_0)=\lambda(\frac{1}{2}(-1+\sqrt{3}i))=x_{0}$ by Lemma \ref{lem_t0}.

Note that $|x_0|= |1-x_0| = 1$, so we have $|\theta_{00}(t_0)|=|\theta_{01}(t_0)|=|\theta_{10}(t_0)|$. By Lemma \ref{lem_deri}, we have 

$$|\lambda'(t_0)|=|\pi i (\frac{\theta_{00}(t_0)\theta_{10}(t_0)}{\theta_{01}(t_0)})^4|=\pi|\theta_{00}(t_0)|^4=\pi|2^8\eta^{24}(t_0)|^{1/6}.$$

We now apply the Chowla--Selberg formula (see \cite{SC}) to $\Q(\sqrt{3}i)$:
$$|\eta(t_0)|^{4}\Im(t_0)=\frac{1}{4\pi\sqrt{3}}\left(\frac{\Gamma(1/3)}{\Gamma(2/3)}\right)^{3}.$$
Then we have
$$|\lambda'(t_0)|=\pi|2^8\eta^{24}(t_0)|^{1/6}=\frac{\pi 2^{4/3}}{4\pi\sqrt{3}\Im(t_0)}\left(\frac{\Gamma(1/3)}{\Gamma(2/3)}\right)^3.$$
We get $$|(\lambda\circ \alpha)'(0)|=|\lambda'(t_0)|\cdot|\alpha'(0)|=\frac{\pi 2^{4/3}}{4\pi\sqrt{3}\Im(t_0)}\left(\frac{\Gamma(1/3)}{\Gamma(2/3)}\right)^3\cdot 2\Im(t_0)=\frac{3\Gamma(1/3)^{6}}{2^{8/3}\pi^{3}}$$ by the fact $\Gamma(1/3)\Gamma(2/3)=\frac{2\pi}{\sqrt{3}}.$
\end{proof}

\subsection{Algebraicity of the formal solutions}

\begin{proposition}\label{alg}
Let $(M,\nabla)$ be a vector bundle with a connection over $\mathbb{P}_K^{1}-\{0,1,\infty\},$ 
and assume that the $p$-curvatures of $(M,\nabla)$ vanish for all $p$.
Then $(M,\nabla)$ is locally trivial with respect to the \'{e}tale topology of $\mathbb{P}_{K}^{1}-\{0,1,\infty\}$.
\end{proposition}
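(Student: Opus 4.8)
The plan is to deduce this from Andr\'e's criterion (Theorem \ref{andre}) applied component-by-component to the formal horizontal section $\yy \in L[[x-x_0]]^m$ constructed in \ref{taylorexp}. Fix $\yy_0 \in L^m$ and let $\yy$ be the unique formal solution with $\yy(x_0) = \yy_0$; write $y$ for one of its coordinates, viewed after a translation of the variable as an element of $L[[x]]$. The goal is to verify the three hypotheses of Theorem \ref{andre}: that $\tau(y) = 0$, that $\rho(y) < \infty$, and that $\prod_v R_v > 1$; algebraicity of every component of $\yy$ for every choice of $\yy_0$ is precisely the statement that $M^{\nabla = 0}$ generates $M$ \'etale-locally, i.e.\ that $(M,\nabla)$ becomes trivial after a finite \'etale cover.

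First I would dispose of $\tau(y)$ and $\rho(y)$. Since the $p$-curvatures of $(M,\nabla)$ vanish for all $p$ (Definition \ref{def_allp}), in particular condition (1) holds, so $\psi_p \equiv 0$ for all $p$ outside a finite set; this set certainly has density zero, so the primes of good reduction with vanishing $p$-curvature have density one. Then \cite{A05}*{Cor.~5.4.5}, quoted in the discussion after Theorem \ref{andre}, gives $\tau(y) = 0$ and $\rho(y) < \infty$ directly. So the substance is the archimedean–nonarchimedean product $\prod_v R_v$.

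For the product of radii I would split over the places of $L$ (after enlarging $K$ to $L$, which does not affect algebraicity). At each archimedean place $w$, Proposition \ref{Rinf} gives $R_w^{[L:\Q]/[L_w:\R]} \geq \frac{3\Gamma(1/3)^6}{2^{8/3}\pi^3} = 5.632\cdots > 1$. At each finite place $w \mid p$: if $(M,\nabla)$ has good reduction at $w$ and $\psi_p \equiv 0$ there, then the explicit convergent series in the proof of Lemma \ref{lem_lmg} shows the horizontal sections converge on $D(x_0, p^{-1/p(p-1)})$, hence the identity map is a $w$-adic uniformization on that disc and $R_w \geq p^{-1/p(p-1)}$ in the appropriate normalization; at the remaining finitely many bad places, condition (2) of Definition \ref{def_allp} — triviality of $G_w$ — likewise forces a full set of solutions on $D(x_0, p^{-1/p(p-1)})$, giving the same bound. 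Taking the product, the nonarchimedean contributions multiply to $\prod_p p^{-1/p(p-1)}$, and since $\sum_p \frac{\log p}{p(p-1)}$ converges to a value well below $\log(5.6)$, the single archimedean factor dominates and $\prod_v R_v > 1$. Then Theorem \ref{andre} applies and $y$ is algebraic over $L(x)$.

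The main obstacle — and the only genuinely delicate point — is the archimedean estimate already packaged in Proposition \ref{Rinf}: producing a uniformizing map $D(0,1) \to \bP^1_\C - \{0,1,\infty\}$ sending $0$ to $x_0$ with $|\phi'(0)|$ as large as possible, which is why the specific point $x_0 = \frac{1+\sqrt{3}i}{2}$ (the elliptic curve of smallest stable Faltings height) is chosen and why the Chowla–Selberg formula enters. Given that proposition, the present statement is a bookkeeping exercise: one checks that the numerical inequality $\frac{3\Gamma(1/3)^6}{2^{8/3}\pi^3} \cdot \prod_p p^{-1/p(p-1)} > 1$ holds and that the definition of "$p$-curvatures vanish for all $p$" supplies exactly the nonarchimedean radius bound $R_w \geq p^{-1/p(p-1)}$ at every finite place, with no exceptions. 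Finally, algebraicity of all coordinates of $\yy$ for all initial vectors $\yy_0$, together with the fact that the $\yy$ span the solution space, shows $(M,\nabla)$ is trivialized by the finite extension of $L(x)$ generated by these algebraic functions, which corresponds to a finite \'etale cover of $\bP^1_K - \{0,1,\infty\}$ after possibly shrinking; hence $(M,\nabla)$ is \'etale-locally trivial.
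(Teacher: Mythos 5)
Your proposal is correct and follows essentially the same route as the paper: invoke Andr\'e's Corollary 5.4.5 (via condition (1)) for $\tau(y)=0$ and $\rho(y)<\infty$, use Proposition \ref{Rinf} for the archimedean radii, use triviality of $G_w$ (automatic at good primes by Lemma \ref{lem_lmg}, imposed by condition (2) at the rest) to get $R_w\geq |p|_w^{-\frac{1}{p(p-1)}}$ at finite places, and conclude from $\log 5.632\cdots-\sum_p\frac{\log p}{p(p-1)}>0$ via Theorem \ref{andre}. The only difference is expository: you separate the good and bad finite places, whereas the paper treats them uniformly through the triviality of $G_w$.
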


\begin{proof}
Consider $\boldsymbol{y}\in L[[(x-x_0)]]$.
By Proposition \ref{Rinf}, we have $$\prod_{w|\infty} R_w\geq 5.632\cdots.$$

If $w|p$ is a finite place of $L,$ then since $G_w$ is trivial, $(M,\nabla)$ has a full set of solutions over 
$D(x_0,|p|^{\frac{1}{p(p-1)}}).$ In particular, $\boldsymbol{y}$ is analytic on $D(x_0,|p|^{\frac{1}{p(p-1)}}).$
Hence 
$$\prod_{w|p} R_w \geq  \prod_{w|p} |p|_w^{-\frac{1}{p(p-1)}}=p^{-\frac{1}{p(p-1)}}.$$ 
and 
$$\log(\prod_{w}R_{w})\geq \log 5.6325\cdots-\sum_p\frac{\log p}{p(p-1)}>0.967\cdots.$$

Applying Theorem \ref{andre}, we have that $\boldsymbol{y}$ is algebraic. Hence $(M,\nabla)$
is \'{e}tale locally trivial.
\end{proof}

\begin{rems}\label{examples}
It is possible to define $G_w$ using different radii such that the proof of the above proposition continues to hold. 
Here are two examples:

\medskip
(1) Set $G'_w:=G_w(x_0, \frac{1}{4})$ for all primes $w|2$ and $G'_w=G_w(x_0,1)$ for other $w$.  We can define $G'$ in the same way as $G$ in section \ref{def_G} but replacing $G_w$ by $G'_w$. In this situation, we have 
$\log(\prod_{w}R_{w})\geq \log 5.6325\cdots-\log 4>0.342\cdots.$
Applying the same argument as in Theorem \ref{thm_fin}, we have $\Lie G'=\Lie G_{\gal}.$ 

In particular, if $(M,\nabla)$ is a vector bundle with connection on $X_K$ such that $\psi_p \equiv 0$ for almost all $p,$ 
and $G'_w = \{1\}$ for all $w,$ then $(M,\nabla)$ has finite monodromy. This result cannot be proved `prime by prime' because the condition at $w|2$ is too weak to imply that $2$ does not divide the order of the local monodromies. 

The equality $\Lie G'=\Lie G_{\gal}$ fails in general, if one drops  condition (1) in section \ref{def_G}, 
and defines $G'$ using just the analogue of condition (2) (that is with $G_w$ replaced by $G'_w$).
(The condition (1) is used to guarantee the assumption that $\tau(y)=0, \rho(y)<\infty$ in Theorem \ref{andre}.)

To see this, we consider the Gauss--Manin connection on $H^1_{\mathrm{dR}}$ of the Legendre family of elliptic curves. Since the Legendre family has good reduction at primes $w\nmid 2,$ $H^1_{\mathrm{dR}}$ admits a Frobenius structure at such primes, so that $G_w=\{1\}$ (see Remark \ref{rmk_M}). For $w|2$ we have 
$G_w\big(\xO,\frac{1}{4}\big)=\{1\}$ by a direct computation: as in  section \ref{sec_KS} below, we see that the matrix giving the connection lies in $\frac{1}{2}\End(M_{\cO_K})\otimes \Omega^1_{X_{\cO_K}}$ and a formal horizontal section of a general differential equation of this form will have convergence radius $\frac 1{4}$. Hence, the smallest group containing all $p$-adic differential Galois groups is trivial while $\Lie G_{\gal}=\mathfrak{sl}_2$. 
In particular, $G'$ (defined with the condition (1)) is the smallest group containing almost all $\psi_p$ and we recover a special case of \cite{K82}*{thm.~11.2}.

\medskip
(2) We now consider a variant of our result when $X$ equals to $\bP^1$ minus more than three points. Let $D$ be the union of $\{0\}$ and all $8$-th roots of unity and let $X=\bA^1-D$. Let $u_0$ be one of the preimages of $x_0$ of the covering map $f:X\rightarrow \bP^1-\{0,1,\infty\}, u\mapsto x=-\frac 1 4 (u^4+u^{-4}-2)$. We may assume that the number field $L$ contains $u_0$.

We consider the following weaker version of $p$-curvature conjecture:
\begin{proposition}\label{10pts}
Let $(N,\nabla)$ be a vector bundle with connection over $X$. Assume that the $p$-curvatures vanish for almost all $\fp$ and that for any finite place $v$, all the formal horizontal sections of $(N,\nabla)$ converges over the largest disc around $u_0$ in $X_{L_w}$. Then $(N,\nabla)$ must be \'etale locally trivial.
\end{proposition}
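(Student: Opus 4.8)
The plan is to reduce Proposition \ref{10pts} to Proposition \ref{alg} (equivalently, to the algebraicity argument underlying Theorem \ref{thm_str}) by pushing $(N,\nabla)$ forward along the finite covering map $f\colon X\to \bP^1-\{0,1,\infty\}$ and checking that the pushed-forward connection $(M,\nabla) := f_*(N,\nabla)$ satisfies the hypotheses of that proposition — but with the local conditions at bad primes verified directly from the convergence hypothesis rather than from vanishing $p$-curvature. First I would record that since $f$ is finite \'etale of degree $4$ (the map $u\mapsto -\tfrac14(u^4+u^{-4}-2)$ is unramified over $\bP^1-\{0,1,\infty\}$, as one checks that the branch locus of $u\mapsto u^4$ and of $u\mapsto u+u^{-1}$ maps into $\{0,1,\infty\}$), $f_*$ preserves integrable connections and, for almost all $\fp$, carries vanishing $p$-curvature to vanishing $p$-curvature — this is exactly the stability of condition (1) of Definition \ref{def_allp} under finite \'etale push-forward, which holds even though it fails for general finite maps. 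So $(M,\nabla)$ satisfies $\psi_p\equiv 0$ for almost all $p$.

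Next I would handle the archimedean estimate. A component $\yy$ of a formal horizontal section of $(M,\nabla)$ centered at $x_0$ is, after pulling back along $f$, a formal horizontal section of $(N,\nabla)$ centered at one of the preimages $u_0$ of $x_0$. Composing the uniformization $\lambda\circ\alpha\colon D(0,1)\to \bP^1_\C-\{0,1,\infty\}$ of Proposition \ref{Rinf} with a local analytic section of $f$ near $x_0$ — or better, lifting the uniformizing disc through the \'etale cover $X_\C\to (\bP^1-\{0,1,\infty\})_\C$ and composing with $f$ — gives a holomorphic map $\phi\colon D(0,1)\to X_\C\xrightarrow{f}\bP^1_\C-\{0,1,\infty\}$ with $\phi(0)=x_0$ and $|\phi'(0)|=|(\lambda\circ\alpha)'(0)|$, since $f$ is a local biholomorphism at $u_0$. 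Hence Lemma \ref{inf} gives the same lower bound $\prod_{w\mid\infty}R_w\geq 5.632\cdots$ as in Proposition \ref{alg}. (Alternatively, one can phrase the whole archimedean estimate intrinsically on $X$ using the same $\phi$ and never mention $x_0$.)

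For the finite places $w\mid p$ of $L$, the convergence hypothesis in Proposition \ref{10pts} says precisely that every formal horizontal section of $(N,\nabla)$ converges on the largest disc around $u_0$ inside $X_{L_w}$; transporting this through the local analytic isomorphism $f$ between a disc around $u_0$ in $X_{L_w}$ and a disc around $x_0$ in $\bP^1_{L_w}-\{0,1,\infty\}$, we get that every component $\yy$ of a formal horizontal section of $(M,\nabla)$ converges on some definite disc $D(x_0,r_p)$ with $r_p\to 1$ — in fact $r_p\ge p^{-1/(p(p-1))}$, since the "largest disc around $u_0$ in $X_{L_w}$" has radius at least $|p|_w^{1/(p(p-1))}$ after the coordinate change (the only constraint being the distance from $u_0$ to the missing points $D$ and to $\infty$, which is bounded below by a $p$-adic unit times the corresponding distances downstairs). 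This yields $\prod_{w\mid p}R_w\ge p^{-1/(p(p-1))}$ exactly as before, so $\log\prod_w R_w>0$. Finally, since $\psi_p\equiv 0$ for a set of primes of density one, \cite{A05}*{Cor.~5.4.5} gives $\tau(\yy)=0$ and $\rho(\yy)<\infty$, and Theorem \ref{andre} applies to each component $\yy$, so every formal horizontal section of $(M,\nabla)$ is algebraic; equivalently $(N,\nabla)$ becomes trivial on a finite cover, i.e. is \'etale locally trivial.

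The main obstacle I expect is the bookkeeping at the bad finite places: one must check that the coordinate change $u\mapsto x=-\tfrac14(u^4+u^{-4}-2)$ induces, on a neighborhood of $u_0$, a $p$-adic analytic isomorphism onto a disc around $x_0$ whose radius is controlled correctly — i.e. that $|f'(u_0)|_w=1$ (so that radii are preserved, not shrunk) and that the "largest disc around $u_0$ in $X_{L_w}$" really does contain the preimage of $D(x_0,p^{-1/(p(p-1))})$. This requires knowing that $u_0$ and the points of $D$ are $w$-integral with pairwise distances that are $w$-adic units, which should follow from $x_0,1-x_0$ being units (as used in Proposition \ref{Rinf}) together with $D$ consisting of roots of unity; but making the implication "convergence on the largest disc in $X_{L_w}$" $\Rightarrow$ "convergence on $D(x_0,p^{-1/(p(p-1))})$ downstairs" precise is where the argument needs care, and it is also the step that explains why the hypothesis is stated in terms of $X$ rather than in terms of radii downstairs.
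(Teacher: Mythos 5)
Your overall strategy (push $(N,\nabla)$ forward along $f$ and run the Andr\'e-criterion argument downstairs) is exactly the alternative proof that the paper sketches after its main proof; the paper's primary argument is simpler, staying upstairs on $X$ and applying Theorem \ref{andre} directly to the formal horizontal sections of $(N,\nabla)$ at $u_0$, with $\prod_{w\mid\infty}R_w\geq 5.632\cdots/\prod_{w\mid\infty}|f'(u_0)|_w=5.632\cdots/4>2^{1/4}$ against $\prod_{w\nmid\infty}R_w\geq\prod_{w\mid 2}|2|_w^{1/4}=2^{-1/4}$.

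However, your quantitative claim at the finite places is wrong at $p=2$, and this is a genuine gap rather than mere bookkeeping. You assert that $|f'(u_0)|_w=1$ and that the points of $D$ are at pairwise $w$-adic unit distances, so that the disc downstairs has radius at least $p^{-1/(p(p-1))}$. Both assertions fail for $w\mid 2$: the eighth roots of unity are $2$-adically close to one another, the distance from $u_0$ to $D$ is $|2|_w^{1/4}$ (not $1$), and since $|u_0|_w=1$ while $|u_0^8-1|_w=|4|_w$ one gets $|f'(u_0)|_w=|u_0^{-5}(u_0^8-1)|_w=|4|_w\neq 1$. (The product formula already forbids $|f'(u_0)|_w=1$ at all finite $w$, given $\prod_{w\mid\infty}|f'(u_0)|_w=4$.) Consequently the image under $f$ of the largest disc around $u_0$ is only $D(x_0,|2|_w^{9/4})$, so $R_w\geq|2|_w^{9/4}$, well below your claimed $|2|_w^{1/2}$. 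The conclusion survives only because $5.632\cdots>2^{9/4}=4.756\cdots$, a much tighter margin than your computation suggests, and verifying it requires the explicit uniformization $h(z)=-\tfrac14\bigl((\tfrac z4+u_0)^4+(\tfrac z4+u_0)^{-4}-2\bigr)$ (or the equivalent upstairs computation), which your proposal defers to exactly the step where your stated verification strategy would fail. Two minor points: $f$ has degree $8$, not $4$; and the preservation of almost-all-vanishing $p$-curvature under finite \'etale push-forward, which you invoke to get $\tau(\yy)=0$ and $\rho(\yy)<\infty$, is fine but can be avoided entirely by working upstairs as the paper does.
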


By direct calculation, the $w$-adic distance from $u_0$ to $D$ is $|2|_w^{\frac 1 4}$ when $w$ is finite. Then our assumption means that all the formal horizontal sections of $(N,\nabla)$ centered at $u_0$ converge over $D(u_0,|2|_w^{\frac 1 4})$. 

\begin{proof}[Proof of the proposition]
By applying Theorem \ref{andre} to the formal horizontal sections around $u_0$, one only need to show that $\prod_{w|\infty}R_w\geq 2^{1/4}$. Since the uniformization $\lambda\circ \alpha: D(0,1)\rightarrow \bP^1(\C)-\{0,1,\infty\}$ factors through $f:\bA^1(\C)-D\rightarrow \bP^1(\C)-\{0,1,\infty\}$, then for the formal horizontal sections of $(N,\nabla)$, we have $R_w\geq |5.632\cdots|_w/|f'(u_0)|_w$ by the chain rule and Lemma \ref{inf}. A direct computation shows that $\prod_{w|\infty}|f'(u_0)|_w=4$ and then $\prod_{w|\infty}R_w\geq 2^{1/4}$ by the fact $5.6325...>4\cdot 2^{1/4}$.
\end{proof}

We now formulate another possible proof of this proposition.
The idea is to reduce the problem for $(N,\nabla)$ over $X$ to $f_*(N,\nabla)$ over $\bP^1-\{0,1,\infty\}$. Over $\bP^1-\{0,1,\infty\}$, the assumption on $(N,\nabla)$ shows that for $f_*(N,\nabla)$, the $p$-adic differential group $G'_w\colonequals G_w(x_0,1)=1$ for $w\nmid 2$. Although for $w|2$, the $2$-adic differential group $G'_w\colonequals G_w(x_0, 2^{-9/4})$ is not trivial, we still have $R_w\geq |2|_w^{9/4}$ by considering the uniformization $h(z)=-\frac 1 4 ((\frac z 4+u_0)^4+(\frac z 4+u_0)^{-4}-2)$. More precisely, by the assumption on $(N,\nabla)$, we can take $R=|4|_w\cdot |2|_w^{1/4}$ in Definition \ref{def_r} and check that $|h'(0)|_w=1$ and $h(0)=x_0$. Then we apply Andr\'e's theorem and conclude that $f_*(N,\nabla)$ and hence $(N,\nabla)$ admit a full set of algebraic solutions. 

If one replaces the assumption in Proposition \ref{10pts} by that the generic radii of all formal horizontal sections of $(N,\nabla)$ are at least $|2|_w^{\frac 14}$ for all $w$ finite, the results in \cite{BS} does not apply directly due to the fact that the points in $D$ are too close to each other in $L_w$ when $w|2$. However, one may modify the argument there, especially a modified version of eqn. (3) in \emph{loc. cit.}, to see that the condition on generic radii would imply trivial monodromy of $(N,\nabla)$. 
\end{rems}

\section{Rationality: an application of a theorem of Bost and Chambert-Loir}\label{sec_rat}

In this section, we will first review the rationality criterion due
to Bost and Chambert-Loir for an algebraic formal function using capacity
norms. Then we will use the moduli interpretation of $X$ to compute the
capacity norm and verify that in our situation this theorem is applicable. 

\subsection{Review of the rationality criterion}

We will review the definition of ad\'{e}lic tube adapted
to a given point, the definition of capacity norms for the special 
case we need, and the rationality criterion in \cite{BCL08}. 
\begin{defn}(\cite{BCL08}*{Definition 5.16}) Let $Y$ be a smooth projective 
curve over $K,$ and let $(x_{0})$ be the divisor
corresponding to a given point $x_{0} \in Y(L)$ for some number field
$L \supset K$. For each finite place $w$ of $L$, let $\Omega_{w}$ be a rigid analytic open subset
of $Y_{L_{w}}$ containing $x_0$. For each archimedean place $w$, we choose one embedding $\sigma: L\rightarrow \C$ corresponding to $w$ and we let $\Omega_{w}$ be an analytic open set
of $Y_\sigma(\C)$ containing $x_0$. The collection $(\Omega_{w})$ is \emph{an ad\'{e}lic tube}
adapted to $(x_{0})$ if the following conditions are satisfied:

\begin{enumerate}
\item
for an archimedean place, the complement of $\Omega_{w}$ is non-polar (e.g. a finite collection
of closed domains and line segments); if $w$ is real, we further assume that $\Omega_{w}$ is stable under complex conjugation.
\item
for a finite place, the complement of $\Omega_{w}$ is a nonempty
affinoid subset;
\item
for almost all finite places, $\Omega_{w}$ is the tube of the specialization of $x_{0}$ in the special fiber of $Y.$ That is, $\Omega_w,$ is the open unit disc with center at $x_{0}$.
\end{enumerate}
We call $(\Omega_w)$ a \emph{weak} ad\'elic tube if we drop the condition that $\Omega_w$ is stable under complex conjugation when $w$ is real.
\end{defn}

\begin{para} Now let $Y = \mathbb{P}^1_{\cO_K}.$ 
The weak ad\'{e}lic tube that we will use can be described as follows:
\begin{enumerate}
\item For an archimedean place, $\Omega_{w}$ will be an open simply connected
domain inside $\mathbb{P}_{\mathbb{C}}^{1}-\{0,1,\infty\}$. 
\item
For a finite place, $\Omega_{w}$ will be chosen to be an open disc
of form $D(x_{0},\rho_{w})$.
\item
For almost all finite places, $\rho_{w}=1$.
\end{enumerate}
\end{para}

\begin{para}\label{cap}
For $\Omega_w$ as above, Bost and Chambert-Loir have defined the local 
capacity norms $||\cdot||_{w}^{\ncap}$ (see \cite{BCL08}*{Chapter 5}).  
These are norms on the line bundle $T_{x_{0}}X$ 
over $\Spec(\mathcal{O}_{L})$. 
The Arakelov degree of $T_{x_{0}}X$ with respect to these norms plays
the same role as $\log(\prod R_{w})$ in section \ref{sec_alg}. This degree can
be computed as a local sum after choosing a section of this bundle.
We will use the section $\frac{d}{dx}$, in which case one has the following simple description of local capacity norms:

\begin{enumerate}
\item
For an archimedean place, let $\phi\colon D(0,R)\rightarrow\Omega_{w}$
be a holomorphic isomorphism that maps $0$ to $x_{0}$, then $||\frac{d}{dx}||_{w}^{\ncap}=|R\phi'(0)|^{-1}_w$ (see \cite{B99}*{Example 3.4}). 
\item
For a finite place, $||\frac{d}{dx}||_{w}^{\ncap}=\rho_{w}^{-1}$ (see \cite{BCL08}*{Example 5.12}.
\end{enumerate} 
\end{para}

Now, we can state the rationality criterion:
\begin{theorem}\label{rat}
(\cite{BCL08}*{Theorem 7.8}) Let $(\Omega_w)$be an ad\'elic tube adapted to $(x_0)$. Suppose $y$ is
a formal power series over $X$ centered at $x_{0}$ satisfying the following
conditions:
\begin{enumerate}
\item 
For all $w$, $y$ extends to an analytic meromorphic function on $\Omega_{w}$;
\item
The formal power series $y$ is algebraic over the function field $K(X).$
\item
The Arakelov degree of $T_{x_{0}}X$ defined as $\displaystyle\sum_{w}-\log(||\frac{d}{dx}||_{w}^{\ncap})$
is positive.
\end{enumerate}
Then $y$ is rational.
\end{theorem}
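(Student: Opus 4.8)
Since Theorem \ref{rat} is \cite{BCL08}*{Theorem 7.8}, I will only describe the strategy of a proof, which is an ad\'elic refinement of the classical Borel--Dwork rationality criterion. The plan is to replace "rationality of $y$" by "vanishing of the Hankel determinants of $y$" via Kronecker's criterion, and then to force that vanishing out of the product formula using capacitary estimates at every place. Concretely, I would write $y=\sum_{n\geq 0}a_n(x-x_0)^n$ with all $a_n$ lying in a fixed number field $L\supseteq K$ over which $x_0$, the $a_n$, and all poles of the algebraic function $y$ are defined, and form the determinants $H_N=\det(a_{i+j})_{0\leq i,j\leq N}\in L$. By Kronecker's criterion $y$ is the germ of a rational function precisely when $H_N=0$ for all $N$ large, so it is enough to show $\prod_w|H_N|_w<1$ for $N\gg 0$; since $H_N\in L$, the product formula then forces $H_N=0$.

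The first step uses hypothesis (2). An algebraic function on a curve has only finitely many poles, so for every place $w$ the meromorphic extension of $y$ to $\Omega_w$ furnished by (1) has only finitely many poles in $\Omega_w$, all lying over the fixed finite set $S\subset\bA^1_L$ of images of the poles of $y$ on its normalization. Choosing once and for all a polynomial $P$ vanishing along $S$ to high order, I would replace $y$ by $\tilde y\colonequals Py$: then $\tilde y$ is \emph{holomorphic} on every $\Omega_w$, the function $y$ is rational iff $\tilde y$ is, and the Hankel determinants of $\tilde y$ differ from those of $y$ only by explicit factors that do not affect the estimate below. This is the place where algebraicity is genuinely used: for a merely meromorphic $y$ the poles could accumulate at $\partial\Omega_w$ and no such $P$ would exist.

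The second step is to bound $|H_N|_w$ at each place through the local capacity norm $\|\tfrac{d}{dx}\|_w^{\ncap}$ of \ref{cap}. For a function holomorphic on $\Omega_w$ one has $\limsup_n|a_n|_w^{1/n}\leq\|\tfrac{d}{dx}\|_w^{\ncap}$: at an archimedean place this is the classical comparison between the growth of Taylor coefficients and the conformal radius of $\Omega_w$ at $x_0$ (exactly the quantity $|R\phi'(0)|_w^{-1}$ appearing in \ref{cap}), at a finite place it is the non-archimedean analogue for domains with affinoid complement, and at almost all finite places $\Omega_w$ is the unit disc, $|a_n|_w\leq 1$, and the contribution is trivial. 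Feeding these into the sharp Hankel (Fekete--Szeg\H o type) estimate gives $\log|H_N|_w\leq\tfrac{N^2}{2}\log\|\tfrac{d}{dx}\|_w^{\ncap}+o(N^2)$, with nontrivial contributions at only finitely many $w$. Summing over $w$ and invoking hypothesis (3), namely $\sum_w-\log\|\tfrac{d}{dx}\|_w^{\ncap}=\widehat{\deg}(T_{x_0}X)>0$ (the conjugation-stability built into an honest ad\'elic tube is what makes this an Arakelov degree over $\mathcal O_L$), one gets $\log\prod_w|H_N|_w\leq-\tfrac{N^2}{2}\widehat{\deg}(T_{x_0}X)+o(N^2)\to-\infty$, so $H_N=0$ for $N\gg 0$ and $y$ is rational.

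The main obstacle is the precise local capacitary estimate and the matching of normalizations. One must prove the sharp comparison between Hankel determinants and the $w$-adic capacity of $\Omega_w$ at $x_0$ for \emph{general} domains (non-polar complement at archimedean places, affinoid complement at finite places) with uniform control of the lower-order terms --- the naive Hadamard bound is lossy when $\|\tfrac{d}{dx}\|_w^{\ncap}>1$, so one genuinely has to exploit the Hankel structure --- and one must check that, after the normalizations of the capacity norms in \ref{cap}, the ad\'elic sum of the local exponents is \emph{exactly} $\widehat{\deg}(T_{x_0}X)$ rather than a positive multiple of it plus spurious constants, so that the single inequality (3) closes the argument. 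Getting this bookkeeping right, together with the potential-theoretic input at each place, is essentially the content of \cite{BCL08}*{Ch.~5--7}; in our applications all the $\Omega_w$ are discs or simply connected domains, where these estimates are the classical ones.
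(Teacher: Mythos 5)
The paper does not prove this statement: Theorem \ref{rat} is quoted from \cite{BCL08}*{Thm.~7.8} and used as a black box, so there is no in-paper argument to compare yours against, and treating it as an imported result (as you do) is the right call. Your sketch is the classical P\'olya--Bertrandias route: Kronecker's criterion reduces rationality to the vanishing of the Hankel determinants $H_N$, one bounds $|H_N|_w$ at each place by roughly the $N^2$-th power of the local capacity, and the product formula together with hypothesis (3) forces $H_N=0$. That is a legitimate strategy and is essentially how the one-dimensional capacitary rationality criteria were first established. The proof actually given in \cite{BCL08} is different in form: it runs through Bost's slope method, comparing the arithmetic slopes of the spaces of global sections of $\cO(N x_0)$-type line bundles with the norms of the evaluation maps onto jets of $y$ at $x_0$; the positivity of $\widehat{\deg}\,T_{x_0}X$ plays the role of an arithmetic ampleness hypothesis forcing these evaluation maps to become degenerate unless $y$ is rational. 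The two routes prove the same statement; the slope method has the advantage of generalizing to higher dimension and to the algebraization theorem (\cite{BCL08}*{Thm.~6.1}) also invoked in this paper, while the Hankel route is more elementary in the one-variable case at hand.

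One local error in your write-up: the intermediate claim that a function holomorphic on $\Omega_w$ satisfies $\limsup_n|a_n|_w^{1/n}\leq||\tfrac{d}{dx}||_w^{\ncap}$ is false for a general domain --- the Taylor series at $x_0$ only converges on the largest disc inscribed in $\Omega_w$, which can be much smaller than the conformal radius (think of a long thin domain). It is precisely the Hankel determinants, not the individual coefficients, that detect the capacity (P\'olya's inequality), as you yourself observe in your final paragraph; so the coefficient bound should be deleted rather than ``fed into'' the Fekete--Szeg\H{o}-type estimate. With that correction the outline is the standard one, but filling in the sharp local Hankel estimates at non-archimedean places and at archimedean places with merely non-polar complement is genuinely the content of \cite{BCL08}*{Ch.~5--7} and not something the present paper supplies.
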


\begin{corollary}\label{vrat}
The theorem still holds if we only assume that $(\Omega_w)$ is a weak adelic tube.
\end{corollary}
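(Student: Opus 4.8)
The plan is to reduce the weak case to the genuine adélic tube case of Theorem \ref{rat} by a base-change (restriction-of-scalars) argument at the real places. The only difference between a weak adélic tube and an adélic tube is that at a real place $w$ we do not require $\Omega_w$ to be stable under complex conjugation. So the strategy is to pass to a quadratic extension $L'$ of $L$ in which every real place of $L$ that causes trouble becomes complex, thereby making the hypotheses of Theorem \ref{rat} literally satisfied over $L'$, apply that theorem, and then descend the conclusion (rationality of $y$) back to $L$.

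Concretely, first I would choose an imaginary quadratic field, or more simply pick $L' = L(\sqrt{-1})$ (or any totally imaginary quadratic extension of $L$ if $L$ already contains $\sqrt{-1}$), so that every archimedean place of $L'$ lies over a complex place or over a real place of $L$ that has split into a pair of complex-conjugate places; in either case the corresponding archimedean completion is $\C$ and no complex-conjugation-stability condition is imposed at such places by the definition of adélic tube. For each archimedean place $w'$ of $L'$ lying over an archimedean place $w$ of $L$, set $\Omega_{w'} = \Omega_w$ under the identification $L'_{w'} \cong \C$ (using one of the two embeddings); for each finite place $w'$ of $L'$ over a finite place $w$ of $L$, set $\Omega_{w'} = \Omega_w \otimes_{L_w} L'_{w'}$, which is again the appropriate disc $D(x_0,\rho_w)$ and which for almost all $w'$ is the unit disc, so conditions (1)--(3) in the definition of adélic tube now hold over $L'$. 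The formal power series $y$, viewed over $X_{L'}$, still satisfies the three hypotheses of Theorem \ref{rat}: it extends meromorphically to each $\Omega_{w'}$ because it did so over $L$, it remains algebraic over $L'(X) \supset K(X)$, and the Arakelov degree is unchanged since $\sum_{w}-\log\|\tfrac{d}{dx}\|_w^{\ncap}$ over $L'$ equals the same sum over $L$ (each place of $L$ contributing the same total, the local degrees being additive under base change with our normalization of $|\cdot|_v$). Hence Theorem \ref{rat} applies and $y \in L'(X)$.

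Finally I would descend: $y$ is a formal power series with coefficients in $L$ (indeed in $K$, or at any rate in $L$) which, as an element of $L'[[x-x_0]]$, is the Taylor expansion of a rational function in $L'(x)$; since the subfield of $L'(x)$ consisting of functions with Taylor coefficients in $L$ at $x_0$ is exactly $L(x)$ (rationality is detected by a linear recursion on the coefficients with coefficients determined over $L$, or simply because $L(x) = L'(x) \cap L((x-x_0))$ inside $L'((x-x_0))$), we conclude $y \in L(x)$, i.e. $y$ is rational. I expect the only mildly delicate point to be the bookkeeping that the Arakelov degree is genuinely invariant under the base change — one must check that the normalizations of the capacity norms at the places over a given place of $L$ combine correctly — but this is routine given the conventions fixed in \ref{cap}, and there is no substantive obstacle.
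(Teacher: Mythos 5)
Your proposal is correct and is essentially the paper's own argument: pass to a totally imaginary finite extension $L'$ of $L$, transport the tube (keeping $\Omega_{w'}=\Omega_w$ at archimedean places and base-changing the discs at finite places), observe the hypotheses of Theorem \ref{rat} — including the Arakelov degree — are unchanged, and apply the theorem over $L'$. The only difference is that you spell out the final descent from $L'(x)$ to $L(x)$, which the paper leaves implicit in the phrase ``we only need to prove that $y$ is rational over $X_{L'}$.''
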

\begin{proof} The idea is implicitly contained in the discussion in \cite{B99}*{section 4.4}. We only need to prove that $y$ is rational over $X_{L'},$ where $L'/L$ is a finite extension which we may assume does not have any real places. 
Let $w$ be a place of $L$ and $w'$ a place of $L'$ over $w$. 

For $w$ is archimedean, choose the embedding $\sigma'\colon L'\rightarrow \C$ corresponding to $w'$ which extends 
the chosen embedding $\sigma:L\rightarrow \C$ corresponding to $w.$ We have a natural identification $Y_{\sigma'}(\C)=Y_{\sigma}(\C),$ and we take $\Omega_{w'}\colonequals\Omega_w.$  If $w$ is a finite place, we set 
$\Omega_{w'} = \Omega_w\otimes_{L_w}L_{w'}.$

Since $L'$ does not have any real places, the weak ad\'elic tube $(\Omega_{w'})$ is an ad\'elic tube. The first two conditions in Theorem \ref{rat} still hold and the Arakelov degree of $T_{x_0}X$ with respect to $(\Omega_w')$ is the same as that of $T_{x_0}X$ with respect to $(\Omega_w)$. We can apply Theorem \ref{rat} to $y$ over $X_{L'}$ and conclude that $y$ is rational.
\end{proof}

\subsection{Proof of the main theorem}\label{sec_pf}
Let $y$ be the algebraic formal function which is one component of
the formal horizontal section $\boldsymbol{y}$ of $(M,\nabla)$ over
$X_K$. 

\begin{lemma}\label{gb}
Let $y$ be as above. Then this formal power series
centered at $x_{0}$ has convergence radius equal to $1$ for almost
all finite places.
\end{lemma}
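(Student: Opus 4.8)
The plan is to bound the $w$-adic radius of convergence of $y$ at $x_0$ from both sides: it is $\ge 1$ at all but finitely many finite places $w$, and $\le 1$ at all but finitely many finite places as soon as $y$ is not a polynomial (the polynomial case being irrelevant for the application, since such a $y$ is already rational). Write $y=\sum_{n\ge 0}a_n(x-x_0)^n$ with $a_n\in L$; by Proposition \ref{alg}, $y$ is algebraic over $K(x)$. For the lower bound I would reuse the input already in hand: since the $p$-curvatures of $(M,\nabla)$ vanish for almost all $p$, \cite{A05}*{Corollary 5.4.5} gives $\rho(y)<\infty$. As $\rho(y)$ is a convergent sum of nonnegative terms, one for each place $v$, all but finitely many of them vanish, and vanishing of the $v$-term forces $\limsup_n|a_n|_v^{1/n}\le 1$, i.e. radius of convergence $\ge 1$ at $v$. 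Since $x_0$ and $x_0-1$ are roots of unity, hence $w$-adic units at every finite place $w$, the open disc $D(x_0,1)$ lies inside $X(\C_p)$, so this says that $y$ is analytic on $D(x_0,1)$ for all but finitely many finite $w$.

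For the upper bound, I would first locate the singularities of $y$. Since $\boldsymbol y$ is a horizontal section of $(M,\nabla)$ over $X_K=\bP^1_K-\{0,1,\infty\}$, where the connection has no singularities, $\boldsymbol y$ continues analytically, without poles, along every path in $X^{\mathrm{an}}$; and as $y$ is algebraic, the normalization $\pi\colon C\to\bP^1$ of the curve with function field $K(x)(y)$ is \'etale over $X_K$ and $y$ is regular on $\pi^{-1}(X_K)$. Hence all branch points of $\pi$ and all poles of $y$ lie over $\{0,1,\infty\}$. Assume now $y$ is not a polynomial. If $\pi$ is trivial, then $y\in K(x)$ is a rational function with poles confined to $\{0,1,\infty\}$ and not a polynomial, so it has a pole over $0$ or $1$. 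If $\pi$ is nontrivial, its branch locus is a subset of $\{0,1,\infty\}$ of size at least two --- a connected cover of $\bP^1$ unramified outside a single point is trivial --- so it meets $\{0,1\}$, and $y$, which generates the function field of $C$, is ramified there. Either way $y$ has a genuine singularity over some $\beta\in\{0,1\}$, and $|x_0-\beta|_w=1$ at every finite $w$.

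It then remains to check that, for all but finitely many finite $w$, the radius of convergence of $y$ at $x_0$ is $\le|x_0-\beta|_w=1$. Fixing a defining polynomial $P\in\cO_K[x,Y]$ of $y$, for $w$ outside the finite set of places determined by $P$, $\mathrm{disc}_Y P$, the leading coefficient of $P$ in $Y$, and the ramification index of $y$ at $\beta$, one expands $P$ around a point of $\pi^{-1}(\beta)$ and uses a Newton-polygon analysis to see that a power series in $x-x_0$ solving $P(x,\cdot)=0$ and agreeing with $y$ near $x_0$ cannot extend analytically across the circle $|x-x_0|_w=1$: near $\beta$ it would be forced to involve a genuine fractional power of $x-\beta$, or to acquire a pole. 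This bounds the radius of convergence of $y$ by $1$, and with the lower bound of the first paragraph yields equality for all but finitely many finite $w$. The lower bound and the location of the singularities are soft; the main obstacle is this last step, the $w$-adic local analysis of the algebraic function $y$ near a ramification point or pole, identifying its radius of convergence with the distance to the nearest singularity at almost all places. This is a standard but somewhat technical argument --- the circle of ideas around Eisenstein's theorem and $p$-adic Newton polygons --- and it is where the real content of the lemma lies.
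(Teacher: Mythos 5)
Your lower bound is the step that actually carries a gap. You argue: ``$\rho(y)<\infty$, and a convergent sum of nonnegative terms has all but finitely many terms equal to zero.'' That implication is false --- a convergent series of nonnegative reals (one term per place) can perfectly well have every term positive, and nothing in the definition of $\rho$ bounds a positive term $\limsup_n\frac1n\sup_{j\le n}\log^+|a_j|_v$ away from $0$ (the $\frac 1n$ destroys any such bound). So $\rho(y)<\infty$ does not give radius $\ge 1$ at almost all finite places. The correct route to the lower bound for an \emph{algebraic} $y$ is either Eisenstein's theorem (there is a nonzero integer $N$ with $N^{n+1}a_n$ integral, so $|a_n|_v\le 1$ for $v\nmid N$), or --- what the paper actually does --- the observation that by Proposition \ref{alg} the covering on which $y$ lives is finite \'etale over $X_L$, hence spreads out to a finite \'etale covering of $X_{\cO_w}$ through $x_0$ for almost all $w$, and the lifting criterion for \'etale maps then makes the formal branch of $y$ at $x_0$ converge on the entire residue disc $D(x_0,1)$. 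That two-line argument is the whole proof in the paper.

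The upper bound, where you place ``the real content'' and invoke Newton polygons near a singularity over $\{0,1\}$, is not needed for anything: the lemma is only used to verify that $(\Omega_w)$ is an ad\'elic tube, i.e.\ that $\Omega_w$ can be taken to be the open unit disc at $x_0$ for almost all $w$ and that $y$ is analytic there. Only the inequality ``radius $\ge 1$'' enters; the phrase ``equal to $1$'' in the statement is just the value of $\rho_w$ chosen for the tube. So as written the proposal leaves the needed half of the lemma unproved and spends its effort on a half that is never used.
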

\begin{proof}
Since the covering induced by $y$ is finite \'etale over $X_L,$ by Proposition \ref{alg}, 
it is \'etale over $X_{\cO_w}$ at $x_0$ for almost all places. For such places, we have $\rho_w=1$ by lifting criterion for \'etale maps.
\end{proof}

\begin{para}\label{atube}
We now define an ad\'elic tube ($\Omega_w$) adapted to $x_0$. For an archimedean place $w$, we choose the embedding $\sigma:L\rightarrow \C$ corresponding to $w$ such that $\sigma(x_0) = (1+\sqrt{3}i)/2$.  Let $\tilO$ be the open region in the upper half plane cut out by the following six edges (see the attached figure): $\Re t=-\frac{3}{2}$, $|t+2|=1$, $|t+\frac{2}{3}|=\frac{1}{3}$, $|t+\frac{1}{3}|=\frac{1}{3}$, $|t-1|=1$, and $\Re t=\frac{1}{2}$. This is a fundamental domain of the arithmetic
group $\Gamma(2)\subset \SL_{2}(\mathbb{Z})$. 

We define $\Omega_w$ to be $\lambda(\tilO)$. 

\begin{figure}
 \centering
 \includegraphics[width=\textwidth,natwidth=610,natheight=642]{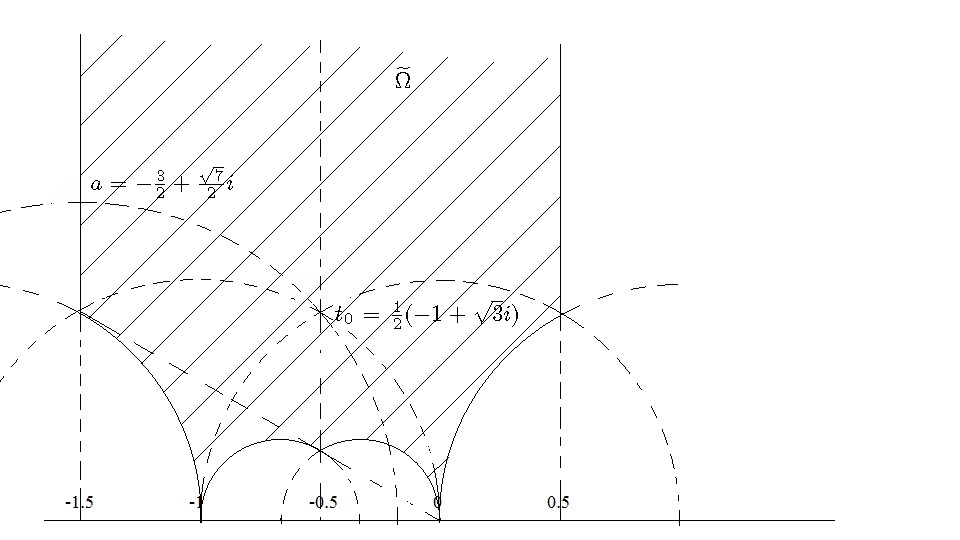}
 \end{figure}
For $w$ finite, we choose $\Omega_w$ to be $D(x_0,1)$ if $y$ is \'etale over $X_{\cO_w}$ at $x_0$; otherwise, we choose $\Omega_w$ to be $D(x_{0},p^{-\frac{1}{p(p-1)}})$.

The collection $(\Omega_w)$ is a weak ad\'elic tube and $y$ extends to an analytic (in particular meromorphic) function on each $\Omega_w$ by Lemma \ref{gb}, Lemma \ref{inf}, and Lemma \ref{lem_lmg}.
\end{para}

\begin{lemma}\label{aRinf}
The Arakelov degree of $T_{x_0}X$ with respect to the ad\'{e}lic
tube $(\Omega_{w})$ defined above is positive.
\end{lemma}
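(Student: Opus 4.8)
The Arakelov degree in question is the local sum $\sum_w -\log\|\frac{d}{dx}\|_w^{\ncap}$, so the plan is to estimate each term using the explicit descriptions of the capacity norms recalled in \ref{cap} and then add them up.

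First I would treat the finite places. For those $w$ at which $y$ is \'etale over $X_{\cO_w}$ at $x_0$ — all but finitely many, by Lemma \ref{gb} — we have $\rho_w = 1$, hence $\|\frac{d}{dx}\|_w^{\ncap} = 1$ and the contribution is $0$. For the finitely many remaining finite places $w\mid p$ we chose $\Omega_w = D(x_0, p^{-\frac{1}{p(p-1)}})$, so $\|\frac{d}{dx}\|_w^{\ncap} = p^{\frac{1}{p(p-1)}}$ (using the normalization of $|\cdot|_w$); summing $-\log$ over all $w\mid p$ and then over $p$ gives exactly $-\sum_p \frac{\log p}{p(p-1)}$, the same correction term that appeared in the proof of Proposition \ref{alg}.

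Next the archimedean places. Here $\Omega_w = \lambda(\tilO)$, and by \ref{cap}(1) I need a holomorphic isomorphism $\phi\colon D(0,R) \to \Omega_w$ with $\phi(0) = x_0$, after which $\|\frac{d}{dx}\|_w^{\ncap} = |R\,\phi'(0)|_w^{-1}$. The key point is that this norm is \emph{independent of the choice of uniformizing radius $R$}: rescaling the disc changes $R$ and $\phi'(0)$ reciprocally. So I may as well take the uniformization already in hand, namely $\lambda\circ\alpha\colon D(0,1)\to X_\C$ from the proof of Proposition \ref{Rinf}, except that its image is all of $\bP^1_\C - \{0,1,\infty\}$, not $\lambda(\tilO)$. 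To fix this I restrict $\alpha$ to the open subset $\alpha^{-1}(\tilO)\subset D(0,1)$, which is simply connected (as $\tilO$ is), uniformize that region by a disc $D(0,R)$ via the Riemann mapping theorem sending $0\mapsto 0$, and compose; the chain rule gives $|R\,\phi'(0)|_w = R\cdot|(\lambda\circ\alpha)'(0)| = R\cdot 5.632\cdots$ where now $R\le 1$ is the conformal radius of $\alpha^{-1}(\tilO)$ at $0$. Since $5.632\cdots > e^{\sum_p \frac{\log p}{p(p-1)}} \approx 1.206$ with a comfortable margin, I only need a lower bound on $R$ — equivalently on the conformal radius of $\tilO$ (with respect to the hyperbolic metric transported to $D(0,1)$) at the point $t_0$ — strong enough that the total degree stays positive. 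This I would get from the explicit geometry of the fundamental domain $\tilO$ pictured in \ref{atube}: the point $t_0 = \frac{-1+\sqrt3 i}{2}$ sits well in the interior, and $\tilO$ contains a hyperbolic disc of definite radius around it, which bounds the conformal radius below.

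The main obstacle is precisely this last estimate: making the lower bound on the conformal radius of $\tilO$ at $t_0$ explicit enough to beat the loss $\sum_p \frac{\log p}{p(p-1)} \approx 0.1877$. One clean way is to observe that $\tilO$ is a fundamental domain for $\Gamma(2)$ and hence $\lambda|_{\tilO}$ is injective with $\lambda(\tilO) = \bP^1_\C - \{0,1,\infty\}$ minus a set of measure zero (the boundary identifications); so the conformal radius of $\Omega_w = \lambda(\tilO)$ at $x_0$ differs from that of all of $\bP^1_\C - \{0,1,\infty\}$ at $x_0$ only by the "thinness" of the removed boundary arcs, and one can quote a Koebe-type distortion bound, or simply note that $\Omega_w$ contains an explicit Euclidean disc around $x_0$ of radius bounded below (since the three boundary arcs of $\tilO$ adjacent to $t_0$ stay a definite hyperbolic distance away, their $\lambda$-images stay a definite Euclidean distance from $x_0$), and use the elementary bound "conformal radius $\ge$ inradius". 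Either route reduces the lemma to a finite, explicit numerical check, which is what I would carry out to finish.
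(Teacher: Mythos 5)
Your plan is correct and follows the paper's proof essentially verbatim: finite places contribute $-\sum_p \frac{\log p}{p(p-1)}$, and at the archimedean place one bounds the capacity (conformal radius) of $\Omega_w=\lambda(\tilO)$ below by $R\cdot|(\lambda\circ\alpha)'(0)|$ with $R$ the conformal radius of $\alpha^{-1}(\tilO)$ at $0$, itself bounded below by the Euclidean inradius via the Schwarz lemma. The only piece you defer — the explicit lower bound on $R$ — is exactly what the paper computes: using the order-$3$ stabilizer of $t_0$ in $\SL_2(\Z)$ (which permutes the three edges $\Re t=-\tfrac32$, $|t+\tfrac23|=\tfrac13$, $|t-1|=1$) together with the reflection $t\mapsto -1-\bar t$, the closest boundary point of $\tilO$ to $t_0$ in the Poincar\'e metric is $a=-\tfrac32+\tfrac{\sqrt7}{2}i$, so $R\geq \bigl|(a-t_0)/(a-\bar t_0)\bigr|=0.45685\cdots$, and $\log(5.6325\cdots\times 0.45685\cdots)-\sum_p\frac{\log p}{p(p-1)}>0.184$, confirming your expectation that the margin is comfortable.
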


\begin{proof}
We want to give a lower bound of $(||\frac{d}{dx}||^{\ncap}_w)^{-1}$, the capacity of $\Omega_w$. Let $a=-\frac{3}{2}+\frac{\sqrt{7}}{2}i$. On the line $\Re(t)=-\frac{3}{2}$, the point $a$ is the closest point to $t_{0}=\frac{1}{2}(-1+\sqrt{3}i)$  with respect to Poincar\'{e} metric. 
The stabilizer of $t_0$ in $\SL_2(\Z)$ has order $3,$ and permutes the geodesics 
$\Re t=-\frac{3}{2}$, $|t+\frac{2}{3}|=\frac{1}{3}$, $|t-1|=1,$ and this action preserves the Poincar\'{e} metric. 
Using this, together with the fact that the distance to $t_0$ is invariant under 
$z \mapsto -1 - \bar z,$ one sees that the distance from any point on the boundary of $\tilO$ to $t_0$ is at least that from $a$ to $t_0$. 
Since $\alpha\colon D(0,1)\rightarrow \cH$ (defined in the proof of Prop. \ref{Rinf}) preserves the Poincar\'e metrics,
$\alpha^{-1}(\tilO)$ contains a disc with respect to the Poincar\'e radius equal to the distance from $t_0$ to $a$.

In $D(0,1)$, a disc with respect to Poincar\'e metric is also a disc in the Euclidean sense. Hence $\alpha^{-1}(\tilO)$ contains a disc of Euclidean radius 
$$|\alpha^{-1}(a)|=|(a-t_{0})/(a-\bar{t}_{0})|=0.45685\cdots.$$ 
Since $\lambda$ maps the fundamental domain $\tilO$ isomorphically onto $\Omega_w,$
by \ref{cap}, the local capacity $(||\frac{d}{dx}||^{\ncap}_w)^{-1}$ is at least 
$|(a-t_{0})/(a-\bar{t}_{0})|\cdot|\lambda'(\frac{1}{2}(-1+\sqrt{3}i))|$.

By \ref{cap}, we have $-\log(||\frac{d}{dx}||_{w}^{\ncap})\geq -\frac{\log p}{p(p-1)}$ when $w|p$.
Recall in Proposition \ref{Rinf} we have $|\lambda'(\frac{1}{2}(-1+\sqrt{3}i))|=5.632\cdots$, hence the Arakelov degree of $T_{x_0}X$ is
$$\sum_{w}-\log(||\frac{d}{dx}||_{w}^{\ncap})>\log(5.6325\cdots \times 0.45685\cdots)-\sum_{p}\frac{\log p}{p(p-1)}>0.184\cdots.$$
\end{proof}

Now we are ready to prove Theorem \ref{thm_str}:
\begin{proof} Applying Proposition \ref{alg}, we have
a full set of algebraic solutions $\boldsymbol{y}$. Choosing the weak ad\'{e}lic
tube as in \ref{atube} and applying Corollary \ref{vrat} 
(the assumptions are verified by \ref{atube} and Lemma \ref{aRinf}), we have that these algebraic solutions are actually rational. 

This shows that $(M,\nabla)$ has a full set of rational solutions over $X_L.$ 
Since formation of $\ker(\nabla)$ commutes with the finite extension of scalars $\otimes_KL,$ 
this implies that $(M,\nabla)$ has a full set of rational solutions over $X_K.$
\end{proof}

\section{Interpretation using the Faltings height }\label{height}

In this section, we view $X_{\mathbb{Z}[\frac{1}{2}]}$ as the
moduli space of elliptic curves with level~$2$ structure. Let  $\lambda_{0} \in X(\bar \Q)$ and $E$ the corresponding elliptic curve. Using the Kodaira--Spencer map, we will relate the Faltings height of $E$ with our lower bound for the product of radii of uniformizability (see section \ref{sec_alg}) at archimedean places of the formal solutions in $\widehat{\mathcal{O}}_{X_{K},\lambda_{0}}$. We will focus mainly on the case when $\lambda_0\in X(\bar \Z)$ and sketch how to generalize to $\lambda_{0} \in X(\bar \Q)$ at the end of this section. In this section, unlike the previous sections, we will use $\lambda$ as the coordinate of $X$.

\subsection{Hermitian line bundles and their Arakelov degrees}\label{sec_hlb}
\begin{para}
Let $K$ be a number field, and $\mathcal{O}_{K}$ its ring of integers.
Recall that an \emph{Hermitian line bundle} $(L, ||\cdot||_{\sigma})$ over $\Spec(\mathcal{O}_{K})$ is 
a line bundle $L$ over $\Spec(\mathcal{O}_{K}),$ together with 
an Hermitian metric $||\cdot||_{\sigma}$ on $L\otimes_{\sigma}\mathbb{C}$
for each archimedean place $\sigma\colon K\rightarrow\mathbb{C}.$

Given an Hermitian line bundle $(L,||\cdot||_{\sigma})$, its (normalized)
\emph{Arakelov degree} is defined as: $$\widehat{\deg}(L)\colonequals\frac{1}{[K:\mathbb{Q}]}\left(\log(\#(L/s\mathcal{O}_{K}))-\sum_{\sigma:K\rightarrow\mathbb{C}}\log||s||_{\sigma}\right),$$
where $s$ is any section.

For a finite place $v$ over $p$, the integral structure of $L$ defines a norm $||\cdot||_{v}$ on $L_{K_{v}}$. More precisely, if $s_{v}$ is a generator of $L_{\mathcal{O}_{K_{v}}}$ and $n$ is an integer, we define $||p^n s_{v}||_{v}=p^{-n[K_v:\Q_p]}$. We obtain a norm on $\cO_v$ by viewing it as the trivial line bundle. We will use $||\cdot||_v$ for the norms on different line bundle as no confusion would arise. We
may rewrite the Arakelov degree using the $p$-adic norms: $$\widehat{\deg}(L)=\frac{1}{[K:\mathbb{Q}]}\left(-\sum_{v}\log||s||_{v}\right),$$
where $v$ runs over all places of $K$.
It is an immediate corollary of the product formula that the right hand side does not depend on the choice of $s$.
\end{para}

\begin{para}
Let $E$ be an elliptic curve over a number field $K,$ and denote by  
$e\colon \Spec K\rightarrow E$ and $f\colon E\rightarrow\Spec K $ the identity and structure map respectively. 
For each $\sigma\colon K\rightarrow\mathbb{C},$ we endow $e^{*}\Omega_{E/K}^{1}=f_{*}\Omega_{E/K}$ 
with the Hermitian norm given by  $||\alpha||_{\sigma}=(\frac{1}{2\pi}\int_{\sigma E}|\alpha\wedge\bar{\alpha}|)^{\frac{\epsilon_{\sigma}}{2}}$, where $\epsilon_{\sigma}$ is $1$ for real embeddings and $2$ otherwise.

This can be used to define the Faltings' height of $E$, which we recall precisely only in the case when $E$ has good reduction over $\cO_K.$ Denote by $f:\cE \rightarrow \Spec \cO_K$ the elliptic curve over $\cO_K$ with generic fibre $E,$ and again write $e$ for the identity section of $\cE.$ The norms $||\alpha||_{\sigma}$ make 
$e^{*}\Omega_{\cE/\Spec(\mathcal{O}_{K})}^{1}=f_{*}\Omega_{\cE/\Spec(\mathcal{O}_{K})}^{1}$ into a Hermitian line bundle, and we define the \emph{(stable) Faltings height}  by 
$$ h_F(E_{\lambda}) = \widehat{\deg}(f_{*}\Omega_{\cE/\Spec(\mathcal{O}_{K})}^{1}).$$
Notice that $h_F(E_{\lambda})$ does not depend on the choice of $K$.
Here we use Deligne's definition for convenience \cite{D}*{1.2}. This differs from 
Faltings' original definition (see \cite{F85}) by a constant $\log(\pi)$.

In general, the elliptic curve $E$ would have semi-stable reduction everywhere after some field extension. We assume this is the case and $E$ has a Neron model $f:\cE \rightarrow \Spec \cO_K$ which endows $f_{*}\Omega_{\cE/\Spec(\mathcal{O}_{K})}^{1}$ a canonical integral structure. With the same Hermitian norm defined as above, we have a similar definition of Faltings height in the general case. See \cite{F85} for details. As in the good reduction case, this definition does not depend on the choice of $K$.
\end{para}

\begin{para}\label{assump}
We will assume that $\lambda_{0}$ and $\lambda_{0}-1$ are both units at each finite place. 
Given such a $\lambda_{0}$, consider the elliptic curve $E_{\lambda_{0}}$ over $\mathbb{Q}(\lambda_{0})$
defined by the equation $y^{2}=x(x-1)(x-\lambda_0)$. Then $E_{\lambda_0}$ has good reduction at primes not dividing $2,$ 
and potentially good reduction everywhere, since its $j$-invariant is an algebraic integer. 
Let $K$ be a number field such that $(E_{\lambda_{0}})_{K}$ has good reduction everywhere. 
We denote by $\cE_{\lambda_{0}}$ the elliptic curve over $\cO_K$ with generic fiber $E_{\lambda_0}.$ 
\end{para}

\begin{para}\label{sec_TRinf}
To express our computation of radii in terms of Arakelov degrees, we endow the 
$\mathcal{O}_{K}$-line bundle $T_{\lambda_{0}}(X_{\mathcal{O}_{K}})$, the tangent bundle of $X_{\mathcal{O}_{K}}$ at $\lambda_{0}$, with the structure of an Hermitian line bundle as follows.
For each archimedean place $\sigma\colon K\rightarrow \C$, we have the universal covering 
$\lambda:\mathcal{H}\rightarrow\sigma X,$ introduced in \ref{lambda}. The $\SL_{2}(\mathbb{R})$-invariant metric
$\frac{dt}{2\Im(t)}$ on the tangent bundle of $\mathcal{H}$ induces
the desired metric on the tangent bundle via push-forward. As in the proof of Proposition \ref{Rinf}, 
our lower bound on the radius of the formal solution is $|2\Im(t_{0})\lambda'(t_{0})|^{\epsilon_{\sigma}}=||\frac{d}{d\lambda}||_{\sigma}^{-1}$,
where $t_{0}$ is a point on $\mathcal{H}$ mapping to $\lambda_{0}$. It is easy to see the left hand side does not depend on the choice of $t_0$. Under the assumptions in \ref{assump}, the tangent vector
$\frac{d}{d\lambda}$ is an $\cO_K$-basis vector for the tangent
bundle $T_{\lambda_{0}}(X_{\mathcal{O}_{K}})$, and we have 
$$\displaystyle\widehat{\deg}(T_{\lambda_{0}}X)=\frac{1}{[K:\mathbb{Q}]}(-\sum_{\sigma:K\rightarrow\mathbb{C}}\log||\frac d{d\lambda}||_{\sigma})\leq\frac{1}{[K:\mathbb{Q}]}\log(\prod_{\sigma}R_{\sigma}),$$ 
where the $R_\sigma$ are the radius of uniformization discussed in section \ref{sec_Rinf}.
\end{para}
\subsection{The Kodaira--Spencer map}\label{sec_KS}

Consider the Legendre family of elliptic curves $E \subset\mathbb{P}_{\mathbb{Z}[\frac{1}{2}]}^{2}\times X_{\mathbb{Z}[\frac{1}{2}]}$ over $X_{\mathbb{Z}[\frac{1}{2}]}$ given by $y^{2}=x(x-1)(x-\lambda).$
We have the Kodaira--Spencer map (\cite{FC}*{Ch.~III,9},\cite{K72}*{1.1}):

\begin{eqnarray}\label{KS}
KS\colon (f_{*}\Omega_{E/X_{\mathbb{Z}[\frac{1}{2}]}}^{1})^{\otimes2}\rightarrow\Omega_{X_{\mathbb{Z}[\frac{1}{2}]}}^{1},\ \alpha\otimes \beta\mapsto\langle\alpha,\nabla\beta\rangle,
\end{eqnarray}
where $\nabla$ is the Gauss--Manin connection and $\langle\cdot,\cdot\rangle$
is the pairing induced by the natural polarization.

\begin{para}
Following Kedlaya's notes (\cite{Kedlaya}*{Sec.~1,3}), we choose
$\{\frac{dx}{2y},\frac{xdx}{2y}\}$ to be an integral
basis of $H_{dR}^{1}(E/X)|_{\lambda_{0}}$ and compute the Gauss--Manin
connection: 
$$\nabla\frac{dx}{2y}=\frac{1}{2(1-\lambda)}\frac{dx}{2y}\otimes d\lambda+\frac{1}{2\lambda(\lambda-1)}\frac{xdx}{2y}\otimes d\lambda.$$
The Kodaira--Spencer map then sends $(\frac{dx}{2y})^{\otimes2}$ to
$\frac{1}{2\lambda(\lambda-1)}d\lambda$. 
\end{para}
This computation shows:

\begin{lemma}\label{ksf}
Given $v$ a finite place not lying over $2$, the Kodaira--Spencer map \eqref{KS} preserves the $\cO_v$-generators of $(f_{*}\Omega_{E/X_{\mathbb{Z}[\frac{1}{2}]}}^{1})^{\otimes2}|_{\lambda_0}$ and $\Omega_{X_{\mathbb{Z}[\frac{1}{2}]}}^{1}|_{\lambda_0}$ when $\lambda_0$ and $\lambda_0-1$ are both $v$-units.
\end{lemma}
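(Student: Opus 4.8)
The plan is to deduce the lemma directly from the Gauss--Manin computation just carried out, after recording that the two differentials occurring in it are $\cO_v$-generators. First I would observe that, since $v$ does not lie over $2$, the element $2$ is a $v$-unit; combined with the hypothesis that $\lambda_0$ and $\lambda_0-1$ are $v$-units, this makes the discriminant $16\lambda_0^2(\lambda_0-1)^2$ of the Weierstrass equation $y^2 = x(x-1)(x-\lambda_0)$ a $v$-unit as well. Hence this equation is a smooth Weierstrass model of $(E_{\lambda_0})_{\cO_v}$, so the invariant differential $\frac{dx}{2y}$ is an $\cO_v$-basis of $(f_*\Omega^1_{E/X_{\mathbb{Z}[\frac12]}})|_{\lambda_0}$, and therefore $\bigl(\frac{dx}{2y}\bigr)^{\otimes 2}$ is an $\cO_v$-basis of $\bigl(f_*\Omega^1_{E/X_{\mathbb{Z}[\frac12]}}\bigr)^{\otimes 2}|_{\lambda_0}$. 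Likewise $X_{\mathbb{Z}[\frac12]}$ is an open subscheme of $\bA^1_{\mathbb{Z}[\frac12]}$ with coordinate $\lambda$, so $d\lambda$ is an $\cO_v$-basis of $\Omega^1_{X_{\mathbb{Z}[\frac12]}}|_{\lambda_0}$.

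With these normalisations fixed, I would invoke the formula computed above: $KS$ sends $\bigl(\frac{dx}{2y}\bigr)^{\otimes 2}$ to $\frac{1}{2\lambda(\lambda-1)}\,d\lambda$, which specialises at $\lambda_0$ to $\frac{1}{2\lambda_0(\lambda_0-1)}\,d\lambda$. By the previous paragraph the scalar $\frac{1}{2\lambda_0(\lambda_0-1)}$ lies in $\cO_v^\times$, so the $\cO_v$-linear map $KS$ carries the chosen generator of the source onto a unit multiple of the chosen generator of the target. This is precisely the assertion that $KS$ preserves $\cO_v$-generators.

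The only step deserving a word of justification — and the one I expect to be the (very mild) crux — is that the Gauss--Manin connection, computed following Kedlaya's notes on the basis $\{\frac{dx}{2y},\,\frac{x\,dx}{2y}\}$, is valid \emph{integrally} at $v$: that this pair is an $\cO_v$-basis of $H_{dR}^1(E/X)|_{\lambda_0}$ and that the displayed expression for $\nabla\frac{dx}{2y}$ holds with $\cO_v$-coefficients rather than merely over $\Q$. As before, this follows from the smoothness of the Weierstrass model once $2$, $\lambda_0$ and $\lambda_0-1$ are $v$-units, and amounts to routine bookkeeping; there is no substantive difficulty.
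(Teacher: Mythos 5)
Your proposal is correct and follows essentially the same route as the paper: the paper derives the lemma directly from the displayed Gauss--Manin computation, which shows $KS$ sends $(\frac{dx}{2y})^{\otimes 2}$ to $\frac{1}{2\lambda(\lambda-1)}d\lambda$, and the multiplier $\frac{1}{2\lambda_0(\lambda_0-1)}$ is a $v$-unit under the stated hypotheses. Your additional verification that $\frac{dx}{2y}$ and $d\lambda$ are indeed $\cO_v$-generators (via the unit discriminant of the Weierstrass model) just makes explicit what the paper leaves implicit.
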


\begin{para}
For the archimedean places $\sigma$, we consider $f_{*}\Omega_{\sigma E/\Spec \C}^{1}$ with the 
metrics $||\alpha||_{\sigma}$ defined in section \ref{sec_hlb}, and we endow $\Omega^1_{X_\Z}|_{\lambda_0}$ the Hermitian line bundle structure as the dual of the tangent bundle.

To see the Kodaira--Spencer map preserves the Hermitian norms on both sides, one may argue as follows. Notice that the metrics on $(f_{*}\Omega_{\sigma E/\Spec \C}^{1})^{\otimes 2}$ and $\Omega^1_{X_\Z}$ are $\SL_2(\R)$-invariant (see for example \cite{ZP}*{Remark~3 in Sec.~2.3}). Hence they are the same up to a constant and we only need to compare them at the cusps. To do this, one studies both sides for the Tate curve.  See for example \cite{MB}*{2.2} for a related argument and Lemma \ref{lem_deri} (2) for relation between $\theta$-functions and $\Omega^1_{X}$. 
\end{para}

Here we give another argument:
\begin{lemma}\label{ksa}
The Kodaira--Spencer map preserves the Hermitian metrics: $$ ||(\frac{dx}{2y})^{\otimes2}||_{\sigma}=||\frac{d\lambda}{2\lambda_{0}(\lambda_{0}-1)}||_{\sigma}.$$
\end{lemma}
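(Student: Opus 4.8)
The plan is to establish the metric identity $\|(\frac{dx}{2y})^{\otimes 2}\|_\sigma = \|\frac{d\lambda}{2\lambda_0(\lambda_0-1)}\|_\sigma$ by a direct computation on the upper half plane, using the explicit uniformization $\lambda \colon \mathcal H \to X_\C$ from \ref{lambda} and the relation between the Weierstrass differential and $\theta$-functions. First I would pick $t_0 \in \mathcal H$ with $\lambda(t_0) = \lambda_0$, and identify the elliptic curve $E_{\lambda_0}$ with $\C/(\Z + t_0\Z)$ as in \ref{lambda}. Under this identification the holomorphic differential $\frac{dx}{2y}$ on $E_{\lambda_0}$ pulls back, up to a nonzero scalar $c(t_0)$, to the standard differential $dz$ on $\C/(\Z + t_0\Z)$; explicitly $x$ and $y$ are given by $\wp$ and $\wp'$ in the lattice-coordinate $z$ after a scaling that depends on $t_0$, and tracking that scaling is what produces $c(t_0)$. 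By Lemma \ref{lem_deri}(2) one has $\lambda'(t_0) = \pi i\left(\frac{\theta_{00}(t_0)\theta_{10}(t_0)}{\theta_{01}(t_0)}\right)^4$, and the scalar $c(t_0)$ can likewise be written in terms of $\theta_{01}(t_0)$ (indeed $c(t_0)$ is essentially $\theta_{01}(t_0)^2$ up to an absolute constant), using the classical Thomae-type formulas expressing the periods of $y^2 = x(x-1)(x-\lambda)$ via $\theta$-constants.

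Next I would compute both sides. On the left, $\|(\frac{dx}{2y})^{\otimes 2}\|_\sigma = \left(\frac{1}{2\pi}\int_{\sigma E}\left|\frac{dx}{2y}\wedge\overline{\frac{dx}{2y}}\right|\right)^{\epsilon_\sigma}$, and since $\frac{dx}{2y}$ corresponds to $c(t_0)\,dz$ on $\C/(\Z+t_0\Z)$, the integral evaluates to $|c(t_0)|^2 \cdot \frac{1}{2\pi}\int_{\C/(\Z+t_0\Z)}|dz\wedge d\bar z| = |c(t_0)|^2 \cdot \frac{\operatorname{Im}(t_0)}{\pi}$ (up to the normalization of $|dz\wedge d\bar z|$). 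On the right, by the definition of the Hermitian structure on $\Omega^1_{X_\Z}|_{\lambda_0}$ as the dual of the tangent bundle equipped with the push-forward of $\frac{dt}{2\operatorname{Im}(t)}$, one has $\|d\lambda\|_\sigma = |\lambda'(t_0)|^{-1}\cdot(2\operatorname{Im} t_0)^{-1}$ in the appropriate power $\epsilon_\sigma$, hence $\|\frac{d\lambda}{2\lambda_0(\lambda_0-1)}\|_\sigma = \frac{1}{|2\lambda_0(\lambda_0-1)|}\cdot\frac{1}{|\lambda'(t_0)|\cdot 2\operatorname{Im}(t_0)}$ raised to $\epsilon_\sigma$. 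Matching the two expressions reduces the lemma to the algebraic identity
\[
\bigl|2\lambda_0(\lambda_0-1)\bigr|\cdot |c(t_0)|^2 \cdot 2\operatorname{Im}(t_0) = \frac{1}{|\lambda'(t_0)|}
\]
(up to a fixed absolute constant which one fixes once and for all by checking a single value, e.g. the $j=1728$ point $t_0 = i$ or our base point $t_0 = \frac{1}{2}(-1+\sqrt3 i)$ where $|\lambda'|$ was already computed in Proposition \ref{Rinf}). Plugging in $\lambda_0 = \frac{\theta_{00}^4}{\theta_{01}^4}$, $\lambda_0 - 1 = -\frac{\theta_{10}^4}{\theta_{01}^4}$ (from the Jacobi identity $\theta_{00}^4 = \theta_{01}^4 + \theta_{10}^4$ in Lemma \ref{lem_theta}), $\lambda'(t_0) = \pi i(\theta_{00}\theta_{10}/\theta_{01})^4$, and $|c(t_0)| \sim |\theta_{01}(t_0)|^2$, everything collapses to an identity among $\theta$-constants that is visibly true.

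The main obstacle is pinning down the constant $c(t_0)$ — that is, the precise proportionality between $\frac{dx}{2y}$ on the Weierstrass model $y^2 = x(x-1)(x-\lambda_0)$ and the normalized differential $dz$ on $\C/(\Z+t_0\Z)$ — together with its expression in $\theta$-constants; this is where the classical period computations for the Legendre curve enter, and one must be careful about the normalization of the complex volume form $|dz \wedge d\bar z|$ versus $\frac{i}{2}dz\wedge d\bar z$, and about the exponent $\epsilon_\sigma$ for real versus complex places. Since only $\SL_2(\R)$-invariance plus a single normalization point is really needed (as sketched in the paragraph preceding the lemma), an alternative and cleaner route — which I would fall back on if the $\theta$-bookkeeping gets heavy — is: show both metrics on $(f_*\Omega^1)^{\otimes 2}$ transported to $X_\C$ via $KS$ are $\SL_2(\R)$-invariant sections of the same line bundle, hence differ by a global constant, and then evaluate that constant at $t_0 = \frac12(-1+\sqrt3 i)$ using the already-computed value $|\lambda'(t_0)| = 5.632\cdots$ from Proposition \ref{Rinf} and the Chowla–Selberg evaluation of $|\eta(t_0)|$, together with $|c(t_0)|$ at that one point. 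Either way the archimedean comparison, combined with Lemma \ref{ksf} at the finite places away from $2$, will give the statement.
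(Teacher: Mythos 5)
Your strategy is the same as the paper's: pull $\frac{dx}{2y}$ back to $c(t_0)\,dz$ on $\C/(\Z+t_0\Z)$, express $c(t_0)$ and $\lambda'(t_0)$ in $\theta$-constants, and compare the two norms directly; your fallback ($\SL_2(\R)$-invariance plus evaluation at one point) is also exactly the alternative the paper sketches in the paragraph preceding the lemma (there the normalization is checked at the cusp via the Tate curve rather than at an interior point). The paper pins down the one ingredient you leave as a black box by using Lemma \ref{lem_deri}(3) to identify the roots of $4v^3-g_2(t_0)v-g_3(t_0)$ with explicit $\theta$-expressions, which yields the exact change of variables \eqref{isom} and hence $c(t_0)=\pi i\,\theta_{01}^2(t_0)$ on the nose; you will need this exact constant (not just $c\sim\theta_{01}^2$), since the lemma asserts an equality, though your plan of fixing it by one evaluation would also work.

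There is, however, a concrete error in your displayed matching identity: you wrote $\|d\lambda\|_\sigma=|\lambda'(t_0)|^{-1}(2\Im t_0)^{-1}$, but that is the norm of the tangent vector $\frac{d}{d\lambda}$; the cotangent (dual) norm is $\|d\lambda\|_\sigma=|\lambda'(t_0)|\cdot 2\Im(t_0)$ (to the power $\epsilon_\sigma$), consistent with \ref{sec_TRinf}. As a result your identity
$|2\lambda_0(\lambda_0-1)|\cdot|c(t_0)|^2\cdot 2\Im(t_0)=|\lambda'(t_0)|^{-1}$
is false, and the discrepancy is not an absolute constant (it depends on $t_0$ through both $|\lambda'|^2$ and $\Im(t_0)$), so it cannot be absorbed by your ``fix the constant at one point'' step. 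The correct reduction is $|c(t_0)|^2\,|2\lambda_0(\lambda_0-1)|=2\pi|\lambda'(t_0)|$ (the $\Im(t_0)$ factors cancel, one appearing on each side), and with $c(t_0)=\pi i\,\theta_{01}^2(t_0)$, $\lambda_0=\theta_{00}^4/\theta_{01}^4$, $\lambda_0-1=\theta_{10}^4/\theta_{01}^4$ and $\lambda'=\pi i(\theta_{00}\theta_{10}/\theta_{01})^4$ both sides equal $2\pi^2|\theta_{00}^4\theta_{10}^4|/|\theta_{01}^4|$. With that dualization corrected, your argument goes through and coincides with the paper's.
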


\begin{proof}
Let $dz$ be an invariant holomorphic
differential of $\mathbb{C}/(\mathbb{Z}\oplus t_{0}\mathbb{Z})$,
where $\lambda(t_{0})=\lambda_{0}$. By the theory of the Weierstrass-$\wp$
function, we have a map from the complex torus to the elliptic curve
$$u^{2}=4v^{3}-g_{2}(t_{0})v-g_{3}(t_{0})$$ 
such that $dz$ maps to
$\frac{dv}{u}$. Here $g_2$ is the weight $4$ modular form of level $\SL_2(\Z)$ with $\frac{4\pi^4}{3}$ as the constant term in its Fourier series and $g_3$ is the weight $6$ modular form with $\frac{8\pi^6}{27}$ as the constant term. Using Lemma \ref{lem_deri} (3), we see that the right hand side has three roots: $\frac{\pi^2}{3}(\theta^4_{00}(t_0)+\theta^4_{01}(t_0)), -\frac{\pi^2}{3}(\theta^4_{00}(t_0)+\theta^4_{10}(t_0)),\frac{\pi^2}{3}(\theta^4_{10}(t_0)-\theta^4_{01}(t_0))$. Hence this curve is isomorphic to $y^2=x(x-1)(x-\lambda_0)$ via the map 
\begin{equation}\label{isom}
x=\frac{v-\frac{1}{3}\pi^{2}(\theta_{00}^{4}(t_{0})+\theta_{01}^{4}(t_{0}))}{-\pi^{2}\theta_{01}^{4}(t_{0})},\ y=\frac{u}{2(-\pi^{2}\theta_{01}^{4}(t_{0}))^{3/2}},
\end{equation}
and we have $$\frac{dx}{2y}=\pi i\theta_{01}^{2}(t_{0})\frac{dv}{u}=\pi i\theta_{01}^{2}(t_{0})dz.$$ Hence $$||(\frac{dx}{2y})^{\otimes2}||_{\sigma}=|\pi^{2}\theta_{01}^{4}(t_{0})\cdot(\frac{1}{2\pi}\int_{E(\mathbb{C})}|dz\wedge d\bar{z}|)|^{\epsilon_{\sigma}}=|\pi\theta_{01}^{4}(t_{0})\Im(t_{0})|^{\epsilon_{v}}.$$
On the other hand, using Lemma \ref{lem_deri} (2),
we have $$||\frac{d\lambda}{2\lambda_{0}(\lambda_{0}-1)}||_{\sigma}^{1/\epsilon_{\sigma}}=\bigg|\frac{2\Im(t_{0})|\lambda'(t_{0})|}{2\lambda_{0}(\lambda_{0}-1)}\bigg|=\bigg|\frac{\Im(t_{0})\pi\theta_{00}^{4}(t_{0})\theta_{10}^{4}(t_{0})}{\theta_{01}^4(t_0)\lambda_{0}(\lambda_{0}-1)}\bigg|=|\pi\theta_{01}^{4}(t_{0})\Im(t_{0})|.$$
\end{proof}

\begin{proposition}\label{lem_2}
If $\lambda_0$ and $\lambda_0-1$ are both units at every finite places, we have 
 $\widehat{\deg}(T_{\lambda_0}X)= -2h_F(E_{\lambda_0})+\frac{\log 2}{3}$.
\end{proposition}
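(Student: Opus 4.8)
The plan is to compute both sides by comparing the Hermitian line bundles $(f_{*}\Omega^{1}_{\cE_{\lambda_0}/\cO_K})^{\otimes 2}$ and $\Omega^{1}_{X_{\cO_K}}|_{\lambda_0}$ on $\Spec \cO_K$ through the Kodaira--Spencer map, and to evaluate the Arakelov degrees using the single rational section $s=(\tfrac{dx}{2y})^{\otimes 2}$ of $(f_{*}\Omega^{1}_{\cE_{\lambda_0}/\cO_K})^{\otimes 2}\otimes K$ together with its image $KS(s)=\tfrac{1}{2\lambda_0(\lambda_0-1)}\,d\lambda$ (the computation of $KS$ in \S\ref{sec_KS}, restricted to $\lambda_0$). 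Since $T_{\lambda_0}X$ and $\Omega^{1}_{X_{\cO_K}}|_{\lambda_0}$ carry dual Hermitian structures, $\widehat{\deg}(T_{\lambda_0}X)=-\widehat{\deg}(\Omega^{1}_{X_{\cO_K}}|_{\lambda_0})$; combining this with $2h_F(E_{\lambda_0})=\widehat{\deg}\big((f_{*}\Omega^{1}_{\cE_{\lambda_0}/\cO_K})^{\otimes 2}\big)$ and the description of $\widehat{\deg}$ via $p$-adic norms recalled in \S\ref{sec_hlb} gives
$$\widehat{\deg}(T_{\lambda_0}X)+2h_F(E_{\lambda_0})=\frac{1}{[K:\Q]}\sum_{v}\log\frac{\|KS(s)\|_{v}}{\|s\|_{v}},$$
the sum ranging over all places of $K$. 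It thus suffices to evaluate this sum and show that it equals $\tfrac{[K:\Q]}{3}\log 2$.

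For an archimedean place, Lemma \ref{ksa} says exactly that $\|KS(s)\|_{\sigma}=\|s\|_{\sigma}$, so these places contribute $0$. For a finite place $v\nmid 2$, the hypothesis that $\lambda_0$ and $\lambda_0-1$ are units makes $\lambda_0$ lie in $X(\cO_v)$, so $d\lambda$ generates $\Omega^{1}_{X_{\cO_v}}|_{\lambda_0}$ and $2\lambda_0(\lambda_0-1)$ is a $v$-unit, whence $\|KS(s)\|_{v}=1$; similarly the Legendre Weierstrass equation is minimal with good reduction at $v$, so $\tfrac{dx}{2y}$ generates $f_{*}\Omega^{1}_{\cE_{\lambda_0}/\cO_v}$ and $\|s\|_{v}=1$ (this is also the content of Lemma \ref{ksf}). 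Hence all finite $v\nmid 2$ contribute $0$ as well.

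The only remaining terms are at the places $v\mid 2$, and this is the delicate point. As above $d\lambda$ generates $\Omega^{1}_{X_{\cO_v}}|_{\lambda_0}$ and $\lambda_0(\lambda_0-1)$ is a $v$-unit, so with the normalization of $\|\cdot\|_{v}$ from \S\ref{sec_hlb} one has $\|KS(s)\|_{v}=\|\tfrac{1}{2}\,d\lambda\|_{v}=2^{[K_v:\Q_2]}$. To compute $\|s\|_{v}$ one compares the Legendre invariant differential $\tfrac{dx}{2y}$ with the N\'eron differential of $\cE_{\lambda_0}/\cO_v$: the discriminant of the Legendre equation is $16\,\lambda_0^{2}(\lambda_0-1)^{2}$, which has valuation $4\,v(2)$ because $\lambda_0,\lambda_0-1$ are $v$-units, whereas $\cE_{\lambda_0}$ has good reduction at $v$ by the choice of $K$ (see \S\ref{assump}), so the minimal discriminant is a $v$-unit; the standard comparison of a Weierstrass equation with the minimal one then shows $\tfrac{dx}{2y}=c\,\omega$ for the N\'eron differential $\omega$ with $\|c\|_{v}=\|2\|_{v}^{-1/3}=2^{[K_v:\Q_2]/3}$, so $\|s\|_{v}=2^{2[K_v:\Q_2]/3}$. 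The $v$-term is therefore $\log\!\big(2^{[K_v:\Q_2]}/2^{2[K_v:\Q_2]/3}\big)=\tfrac{[K_v:\Q_2]}{3}\log 2$, and summing over $v\mid 2$ yields $\tfrac{[K:\Q]}{3}\log 2$. Dividing by $[K:\Q]$ gives $\widehat{\deg}(T_{\lambda_0}X)+2h_F(E_{\lambda_0})=\tfrac{\log 2}{3}$, as claimed. The main obstacle is precisely this $v\mid 2$ computation: pinning down the exact power of $2$ relating the Legendre differential to the N\'eron differential, which relies on the discriminant comparison together with the everywhere-good-reduction hypothesis on $K$.
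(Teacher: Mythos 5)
Your proof is correct and follows essentially the same route as the paper: compare the two Hermitian line bundles through the Kodaira--Spencer map, let Lemmas \ref{ksa} and \ref{ksf} kill every contribution except at $v\mid 2$, and there use the discriminant comparison to get $\|\tfrac{dx}{2y}\|_v=\|2\|_v^{-1/3}$. The only cosmetic difference is that the paper exhibits a minimal model at $v\mid 2$ explicitly via the Deuring normal form, whereas you invoke directly that good reduction forces the minimal discriminant to be a $v$-unit before applying the standard $\Delta^{1/12}$ comparison of invariant differentials.
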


\begin{proof}
By lemma \ref{ksf} and lemma \ref{ksa}, we have 

\begin{equation}\label{eqn_ht}
\begin{split}
-\widehat{\deg}(T_{\lambda_0}X) =&\widehat{\deg}(\Omega^1_{X_{O_K}}|_{\lambda_0})\\
=&\frac{1}{[K:\Q]}(-\sum_{v}\log||\frac{d\lambda}{2\lambda(\lambda-1)}||_v) \\
=&\frac{1}{[K:\Q]}(-\sum_{v|\infty}\log||\frac{d\lambda}{2\lambda(\lambda-1)}||_v-\sum_{v\text{ finite}}\log||\frac{d\lambda}{2\lambda(\lambda-1)}||_v)\\
=&\frac{1}{[K:\Q]}(-\sum_{v|\infty}\log||(\frac{dx}{2y})^{\otimes 2}||_v-\sum_{v\text{ not divides }2,\infty}\log||(\frac{dx}{2y})^{\otimes 2}||_v\\
&-\sum_{v|2}\log ||1/2||_v)\\
=&2h_F(E_{\lambda_0})+\frac{1}{[K:\Q]}\sum_{v|2}\log||(\frac{dx}{2y})^{\otimes 2}||_v-\log 2.
\end{split}
\end{equation}

Now we study $||(\frac{dx}{2y})^{\otimes 2}||_v$ given $v|2$. The sum $\frac{1}{[K:\Q]}\sum_{v|2}\log||(\frac{dx}{2y})^{\otimes 2}||_v$ does not change after extending $K$, hence we may assume that $\cE_{\lambda_0}$ over $\cO_v$ has the Deuring normal form $u^2+auw+u=w^3$ (see \cite{S} Appendix A Prop. 1.3 and the proof of Prop. 1.4 shows in the good reduction case, $a$ is a $v$-integer). An invariant differential generating $f_{*}\Omega_{\mathcal{E}_{\lambda_0}/\Spec \cO_K[\frac{1}{3}]}^{1}$ is $\frac{dw}{2u+aw+1}$. 

Because both $\frac{dw}{2u+aw+1}$ and $\frac{dx}{2y}$ are invariant differentials, we have $||\frac{dx}{2y}||_v=||\Delta_1/\Delta_2||_v^{\frac 1{12}}||\frac{dw}{2u+aw+1}||$, where $\Delta_1$ and $\Delta_2$ are the discriminant of the Deuring normal form and that of the Legendre form respectively. Since $E$ has good reduction, $||\Delta_1||_v= 1$ (see the proof of \emph{loc. cit.}). Hence $||\frac{dx}{2y}||_v=||\frac{dw}{2u+aw+b}||_v\cdot||1/16||^{1/12}_v=||2||^{-1/3}_v$.

Hence $\widehat{\deg}(T_{\lambda_0}X)= -2h_F(E_{\lambda_0})-\frac 2{3}\log 2+\log 2=-2h_F(E_{\lambda_0})+\frac{\log 2}{3}$.
\end{proof}

\begin{para}
As pointed out by Deligne (\cite{D}*{1.5}), the point $\frac{1+\sqrt{3}i}{2}$ corresponds to the elliptic curve with smallest height. Hence, our choice $\frac{1+\sqrt{3}i}{2}$ gives the largest $\widehat{\deg}(T_{\lambda_0}X)$ among those $\lambda_0$ such that $\lambda_0$ and $\lambda_0-1$ are units at every prime.
\end{para}

\begin{subsection}{The general case}
For the general case when $\lambda_0\in X(\bar{\Q})$, using a similar argument as in section \ref{sec_KS}, we have
\begin{equation}
\begin{split}
\frac{1}{[K:\mathbb{Q}]}(-\sum_{\sigma:K\rightarrow\mathbb{C}}\log||\frac d{d\lambda}||_{\sigma})&\leq -2h_F(E_{\lambda_0})+\frac{\log 2}3\\
&+\frac 1{[K:\Q]}\Big(\sum_{v\text{ finite}}\log^+||\lambda_0||_v+\log(|\Nm\lambda_0(\lambda_0-1)|)\Big)
\end{split}
\end{equation}
and equality holds if and only if $\lambda_0\in X(\bar{\Z}_2)$. As discussed in \ref{sec_TRinf}, the left hand side is the sum of the logarithms of our estimates of the radii of uniformizability at archimedean places.

We also need to modify the estimate of the radii at finite places in Lemma \ref{lem_lmg}. 
A possible estimate for $R_v$ is $p^{-\frac 1{p(p-1)}}\cdot \min\{||\lambda_0||_v,||\lambda_0-1||_v,1\}$. One explanation of the factor $\min\{||\lambda_0||_v,||\lambda_0-1||_v,1\}$ is that we cannot rule out the possibility that one has local monodromy at $0,1,\infty$ merely from the information of $p$-curvature at $v$.

Compared to the case when $\lambda_0\in X(\bar{\Z})$, our estimate for the sum of the logarithms of the archimedean radii increases by at most $\frac 1{[K:\Q]}(\sum_{v\text{ finite}}\log^+||\lambda_0||_v+\log(|\Nm\lambda_0(\lambda_0-1)|))$, while the estimate for the sum of logarithms of the radii at finite places becomes smaller by $\sum_v\max\{\log^+||\lambda_0^{-1}||_v,\log^+||(\lambda_0-1)^{-1}||_v\}.$ An explicit computation shows that the later is larger than the former. Hence the estimate for the product of the radii does not become larger than the case when 
$\lambda_0\in X(\bar{\Z})$.
\end{subsection}

\section{The affine elliptic curve case}\label{ec}
Let $X\subset \bA^2_{\Z}$ be the affine curve over $\Z$ defined by the equation $y^2=x(x-1)(x+1)$. The generic fiber $X_{\Q}$ is an elliptic curve (with $j$-invariant $1728$) minus its identity point. Given a vector bundle with connection over $X_K$, we will define the notion of vanishing $p$-curvature for \emph{all} finite places along the same lines as in section \ref{pcurv}. The main result of this section is:

\begin{theorem}\label{thm_ec}
Let $(M,\nabla)$ be a vector bundle with connection over $X_K$. Suppose that the $p$-curvatures of $(M,\nabla)$ vanish for all p. Then $(M,\nabla)$ is \'etale locally trivial.
\end{theorem}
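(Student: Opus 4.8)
The plan is to mimic the strategy used for $\mathbb{P}^1-\{0,1,\infty\}$ in Sections \ref{sec_alg}--\ref{sec_rat}, but working directly on the affine elliptic curve $X$ rather than pushing forward. First I would fix a convenient base point $x_0 \in X(\bar{\Q})$ and consider the formal horizontal sections $\yy \in L[[t]]^m$ of $(M,\nabla)$ centered at $x_0$, where $t$ is a local coordinate (e.g. a uniformizer of $\cO_{X,x_0}$). As in \ref{taylorexp}, the vanishing of $p$-curvatures for almost all $p$ gives, via Andr\'e's Corollary 5.4.5, the global-boundedness conditions $\tau(y)=0$, $\rho(y)<\infty$ for each component $y$. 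Then, to invoke Theorem \ref{andre} and conclude algebraicity of $y$, I need the lower bound $\prod_v R_v > 1$ on the radii of uniformizability.

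The key new input is the estimate at archimedean places. Here, instead of the $\lambda$-uniformization of $\mathbb{P}^1-\{0,1,\infty\}$, I would use the uniformization of the elliptic curve $E$ with $j$-invariant $1728$: the complex points $X(\C)$ are $(\C/\Lambda)\smallsetminus\{0\}$ for the lemniscatic lattice $\Lambda$, and more importantly $X(\C)$ is itself hyperbolic (a once-punctured genus-one curve) so it admits a uniformization by the unit disc $D(0,1)$. I would pick $x_0$ to be a point which makes the derivative of the uniformizing map as large as possible — the analogue of the elliptic point $\frac{1+\sqrt{3}i}{2}$ — and compute $|\phi'(0)|$ explicitly. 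This is where the reference to Eremenko \cite{E} enters: a result bounding the hyperbolic-to-Euclidean derivative (a Landau/Schottky-type inequality for the punctured torus), combined with explicit evaluation of $\wp$, $\theta$-functions, and the Chowla--Selberg formula for $\Q(i)$ (the lemniscatic case), should give an effective lower bound on $\prod_{w|\infty} R_w$ analogous to the $5.632\cdots$ of Proposition \ref{Rinf}. For the finite places, I would define, exactly as in Definition \ref{def_allp}, the condition "$p$-curvature vanishes for all $p$" to include triviality of the local $p$-adic differential Galois group $G_w$ at the finitely many bad primes, using the analogue of Lemma \ref{lem_lmg} for a semistable (here good, away from $2$) model of $X$ over $\cO_K$; this gives $R_w \geq (\text{dist}_w(x_0, \text{bad locus}))\cdot p^{-1/(p(p-1))}$ at each finite place, so $\prod_{w\nmid\infty} R_w \geq \prod_p p^{-1/(p(p-1))}$ up to a factor from the $2$-adic geometry. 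If the archimedean bound beats $\exp(\sum_p \frac{\log p}{p(p-1)})$ times that correction, Andr\'e's theorem applies and every component $y$ is algebraic; equivalently $(M,\nabla)$ is \'etale locally trivial, which is exactly the conclusion of Theorem \ref{thm_ec}.

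Unlike the $\mathbb{P}^1-\{0,1,\infty\}$ case, I would \emph{not} expect to upgrade "algebraic" to "rational" — and indeed the theorem only claims \'etale local triviality, consistent with the example in Section \ref{ec_eg} where $G_{\gal} = \Z/2\Z$. So the Bost--Chambert-Loir rationality step is omitted; algebraicity of the formal solutions is the whole game, and \'etale local triviality follows because a finite set of algebraic power series solutions generating $M$ cuts out a finite \'etale cover $f\colon Y \to X$ over which $f^*(M,\nabla)$ becomes $(\cO_Y^{\rk M}, d)$.

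The main obstacle I anticipate is the archimedean estimate: producing a clean, effective lower bound for $|\phi'(0)|$ at the optimal point $x_0$ on the once-punctured torus. The punctured torus is not simply a quotient of $\mathcal{H}$ by a congruence subgroup of $\SL_2(\Z)$ in the naive way the $\lambda$-line is, so one needs either the explicit Fuchsian group uniformizing a once-punctured square torus, or — as the introduction signals — Eremenko's theorem to get the bound indirectly from the geometry of a maximal disc inside a fundamental domain. Checking that Eremenko's inequality is strong enough (i.e. that the resulting constant exceeds $\exp(\sum_p\frac{\log p}{p(p-1)})$ even after the $2$-adic loss, since $2$ is a bad prime for $y^2 = x(x-1)(x+1)$) is the delicate quantitative point; everything else (the $p$-adic radius bounds, the reduction theory, the passage from algebraicity to an \'etale trivialization) is parallel to what has already been done.
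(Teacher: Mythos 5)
Your proposal follows essentially the same route as the paper: Andr\'e's algebraicity criterion applied to the formal horizontal sections at a well-chosen point of the punctured CM torus, with the archimedean radius bounded via Eremenko's derivative estimate for the uniformization of $\C-(\Z+t_0\Z)$ together with the Chowla--Selberg formula for $\Q(i)$, the finite-place bounds built into the definition of ``$p$-curvature vanishes for all $p$,'' and no Bost--Chambert-Loir rationality step. The only small discrepancies are that the paper takes $x_0=(0,0)$ (the image of $z=\tfrac12$) and transfers the formal solutions to $\widehat{\cO_{\bA^1,0}}$ via the \'etale projection $y$, and that its Definition \ref{ec_allp} imposes convergence on $D(0,p^{-1/(p(p-1))})$ uniformly at all finite places, so the $2$-adic correction factor you anticipate does not arise.
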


\begin{rems}
This theorem cannot be deduced from applying Theorem \ref{thm_str} to the push-forward of $(M,\nabla)$ via some finite \'etale map from an open subvariety of the affine elliptic curve to $\bP^1_K-\{0,1,\infty\}$. Unlike the $\bP^1_K-\{0,1,\infty\}$ case, the conclusion here allows the existence of $(M,\nabla)$ with finite nontrivial monodromy. See section \ref{2isog}.
\end{rems}

\subsection{Formal horizontal sections and $p$-curvatures}
\begin{para}\label{ec_setup}
We fix $x_0=(0,0)\in X(\Z)$ and denote by $(x_0)_K$ and $(x_0)_{k_v}$ the images of $x_0$ in $X(K)$ and $X(k_v)$.
Let $y:X\rightarrow \bA^1_\Z$ be the projection to the $y$-coordinate. It is easy to check that this map is \'etale along $x_0$ and hence induces isomorphisms between the tangent spaces $T_{x_0}X\cong T_{0}\bA^1_\Z$ and between the formal schemes $\widehat{X_K}_{/(x_0)_K}\cong \widehat{\bA^1_{K}}_{/0}$.  In particular, we have an analytic section $s_v$ of the projection $y$ from $D(0,1)\subset \bA^1(K_v)$ to $X(K_v)$ such that $s_v(0)=x_0$ for any finite place $v$ by the lifting criterion for \'etale maps. By definition, the image $s_v(D(0,1))$ is the open rigid analytic disc in $X(K_v)$ which is the preimage of $(x_0)_{k_v}$ under the reduction map $X(K_v)\rightarrow X(k_v)$.

By choosing a trivialization of $M$ in some neighborhood of $(x_0)_K$, we can view a formal horizontal section $m$ of $(M,\nabla)$ around $(x_0)_K$ as a formal function in $\widehat{\cO_{X_K,(x_0)_K}}^r\cong \widehat{\cO_{\bA^1_K,0}}^r$, where $r$ is the rank of $M$. We denote $f\in \widehat{\cO_{\bA^1_K,0}}^r$ to be the image and the goal of this section is to prove that the formal power series $f$ is algebraic.
\end{para}

Let $U$ be $X-\{(0,1),(0,-1)\}$. It is a smooth scheme over $\Z$. Our chosen point $x_0$ is a $\Z$-point of $U$ and $s_v(D(0,1))\subset U(K_v)$.  For $v|p$ a finite place of $K$, we say that $(M,\nabla)$ has \emph{good reduction} at $v$ if $(M,\nabla)$ extends to a vector bundle with connection on $U_{\cO_v}$. Similar to Lemma \ref{lem_lmg}, we have:

\begin{lemma}\label{ec_Rfin}
Suppose that $(M,\nabla)$ has good reduction at $v$. If the $p$-curvature $\psi_p$ vanishes\footnote{This means $\psi_p\equiv0$ on $X_{\cO_v}\otimes \Z/p\Z$ as in section \ref{def_pcurv}.}, then the formal function $f$ is the germ of some meromorphic function on the disc $D(0, p^{-\frac 1 {p(p-1)}})\subset \bA^1$.
\end{lemma}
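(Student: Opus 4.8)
The plan is to follow the proof of Lemma~\ref{lem_lmg}, using $y$ as an \'etale coordinate near $x_0$ in place of the global coordinate $x$ on $\bP^1-\{0,1,\infty\}$. By the good reduction hypothesis, extend $(M,\nabla)$ to a vector bundle with connection $(\cM,\nabla)$ over $U_{\cO_v}$. Since $\cM$ is locally free one may choose an $\cO_v$-basis of $\cM$ in a neighborhood of $(x_0)_{\cO_v}$; the change of basis between it and the (possibly only $K$-rational) trivialization of $M$ used to define $f$ in \ref{ec_setup} is a $\GL_r$-valued rational function regular at $x_0$, and its poles on the residue disc are exactly what force the conclusion to be phrased with ``meromorphic'' rather than ``analytic''. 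Hence it suffices to bound from below the radius of convergence of the formal horizontal section of $(\cM,\nabla)$ centered at $x_0$, written in the chosen $\cO_v$-basis.

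Because $\partial F/\partial x$ (with $F=y^{2}-x(x-1)(x+1)$) is a unit at $x_0$ in every residue characteristic, $y$ is an \'etale coordinate on an open $\mathcal{V}\subset U_{\cO_v}$ whose tube contains the residue disc $s_v(D(0,1))$ of $x_0$; thus $\partial\colonequals d/dy$ is a regular derivation of $\cO_{\mathcal{V}}$, and its reduction mod $p$ satisfies $\partial^{p}=0$ (this holds for $d/dy$ on $k_v[y]$ and propagates to the \'etale algebra $\cO_{\mathcal{V}}\otimes\Z/p\Z$ because $dy$ generates the differentials). As in \ref{def_pcurv}, since $\psi_p$ is $p$-linear in the derivation and $\psi_p\equiv0$, we get $\psi_p(\partial)=-\nabla(\partial)^{p}\equiv0$ on $\mathcal{V}\otimes\Z/p\Z$, i.e.\ $\nabla(\partial)^{p}(\cM|_{\mathcal{V}})\subset p\,\cM|_{\mathcal{V}}$, and iterating, $\nabla(\partial)^{i}(\cM|_{\mathcal{V}})\subset p^{\left[\frac ip\right]}\cM|_{\mathcal{V}}$. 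Exactly as in Lemma~\ref{lem_lmg}, the formal horizontal section with prescribed initial value $m_0$ at $x_0$ equals
$$\sum_{i=0}^{\infty}\nabla(\partial)^{i}(m_0)\,\frac{(-y)^{i}}{i!},$$
and the same elementary $p$-adic estimate ($\left[\tfrac ip\right]-\mathrm{ord}_p(i!)\ge-\tfrac{i}{p(p-1)}-1$) shows that this series converges on $D\big(0,p^{-\frac{1}{p(p-1)}}\big)$. Together with the first paragraph, $f$ is the germ of a meromorphic function on that disc.

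The main (and essentially only, though mild) obstacle, compared with the proof of Lemma~\ref{lem_lmg}, is that $y$ is merely a \emph{local} coordinate on $X$: one must check that it is \'etale on a neighborhood of $x_0$ in $U_{\cO_v}$ large enough to contain the residue disc (immediate from $\partial F/\partial x|_{x_0}\in\cO_v^{\times}$ and the smoothness of $U$ over $\Z$ noted in \ref{ec_setup}), and one must keep track of the integral $\cO_v$-basis of $\cM$ versus the trivialization defining $f$. Otherwise this is a routine transcription of Lemma~\ref{lem_lmg}. Its purpose is that vanishing of the $p$-curvature at $v$ yields the lower bound $p^{-\frac{1}{p(p-1)}}$ for the $v$-adic radius of uniformizability of the components of $f$, which, together with an archimedean lower bound obtained from Eremenko's theorem, will make Andr\'e's criterion (Theorem~\ref{andre}) applicable to this curve.
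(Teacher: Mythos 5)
Your proposal is correct and follows essentially the same route as the paper's proof: extend $(M,\nabla)$ integrally by good reduction, use that $y$ is \'etale near $x_0$ so that $\partial/\partial y$ is a regular derivation with $\partial^p=0$ mod $p$ on the whole residue disc, deduce $\nabla(\partial/\partial y)^p(\cM)\subset p\cM$ from $\psi_p\equiv 0$, and rerun the Taylor-series estimate of Lemma \ref{lem_lmg} in the coordinate $y$, with the change of trivialization accounting for the word ``meromorphic.'' The paper's version is just a terser transcription of the same argument.
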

\begin{proof}
Let $(\cM,\nabla)$ be an extension of $(M,\nabla)$ over $X_{\cO_v}$.
Since $y$ is \'etale, the derivation $\frac \partial {\partial y}$ is regular over some Zariski open neighborhood $\bar{V}$ of $x_0\in X\otimes \Z/p\Z$. Let $V\subset X(K_v)$ be the preimage of $\bar{V}$ under reduction map. Since the $p$-curvature vanishes, we have $\nabla(\frac \partial {\partial y})^p(\cM|_V)\subset p\cM|_V$. Notice that $s_v(D(0,1))\subset V$. Then the proof of Lemma \ref{lem_lmg} shows the existence of horizontal sections of $M$ on $s_v(D(0, p^{-\frac 1 {p(p-1)}}))$. Via a local trivialization of $M$ and the isomorphism of formal neighborhoods of $x_0$ and $0$, we see that $f$ is meromorphic over $D(0, p^{-\frac 1 {p(p-1)}})$.
\end{proof}

This lemma motivates the following definition:
\begin{defn}\label{ec_allp}
We say that \emph{the $p$-curvatures of $(M,\nabla)$ vanish for all p} if
\begin{enumerate}
\item the $p$-curvature $\psi_p$ vanishes for all but finitely many $p$,
\item all formal horizontal sections around $x_0$, when viewed as formal functions in $\widehat{\cO_{\bA^1_K,0}}^r$, are the germs of some meromorphic functions on $D(0, p^{-\frac 1 {p(p-1)}})$ for all finite places $v$.
\end{enumerate}
\end{defn}
\begin{rems}
The second condition does not depend on the choice of local trivialization of $M$. Moreover, for each $v$, this condition remains the same if we replace the projection $y$ by any map $g: W_{\cO_v}\rightarrow \bA^1_{\cO_v}$ such that $W_{\cO_v}$ is a Zariski open neighborhood of $(x_0)_{\cO_v}$ in $X_{\cO_v}$ and that $g$ is \'etale.
\end{rems}

\subsection{Estimate at archimedean places and algebraicity}

Let $\sigma:K\rightarrow \C$ be an archimedean place.
Let $\phi: D(0,1)\rightarrow X(\C)$ be a uniformization map such that $\phi(0)=x_0$. We have the following lemma whose proof is the same as that of Lemma \ref{inf}:

\begin{lemma}
The $\sigma$-adic radius $R_\sigma$ (see Definition \ref{def_r}) of the formal functions $f$ in \ref{ec_setup} would be at least $|(y\circ \phi)'(0)|_\sigma$.
\end{lemma}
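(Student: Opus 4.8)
The plan is to mirror the argument of Lemma~\ref{inf} almost verbatim, with the composite map $y\circ\phi$ playing the role that $\phi$ itself played before. The point is that $\phi$ gives a $\sigma$-adic uniformization of the curve $X(\C)$, and composing with the \'etale projection $y$ transports everything to $\bA^1$, where the notion of $R_\sigma$ for the formal function $f$ literally lives (recall from \ref{ec_setup} that $f$ is obtained from a formal horizontal section by using the isomorphism $\widehat{X_K}_{/(x_0)_K}\cong\widehat{\bA^1_K}_{/0}$ induced by $y$).

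First I would set $h=y\circ\phi\colon D(0,1)\to\bA^1(\C)$, which is holomorphic with $h(0)=0$; after rescaling the coordinate on the source to $t=h'(0)z$ we may assume $h'(0)=1$, identifying $D(0,1)$ with the $t$-disc $D_\sigma(0,R)$ of radius $R=|(y\circ\phi)'(0)|_\infty$. Next, pull back the horizontal section: $\phi^*m$ is a formal horizontal section of $(\phi^*M,\phi^*\nabla)$ on the simply connected disc $D(0,1)$, hence arises from an honest vector-valued holomorphic function $\boldsymbol g$ on $D(0,1)$. Via the local trivialization of $M$ near $(x_0)_K$ and the identification of formal neighbourhoods through $y$, the components of $\boldsymbol g$, viewed as functions of $t$, are precisely meromorphic (indeed holomorphic) functions on $D_\sigma(0,R)$ whose germ at $0$ is the corresponding component of $f$. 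Taking $g$ to be such a component and $h^{-1}$ in the $t$-coordinate (which has derivative $1$ at $0$), the pair $(g,h)$ is a $\sigma$-adic uniformization of $f$ by $D_\sigma(0,R)$ in the sense of Definition~\ref{def_r}, so $R_\sigma\ge R=|(y\circ\phi)'(0)|_\sigma$.

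The one place that needs a touch more care than in Lemma~\ref{inf} is the bookkeeping identifying the analytic continuation of $f$ along $y\circ\phi$ with a genuine meromorphic function on the $t$-disc; this is where the \'etaleness of $y$ at $x_0$ (established in \ref{ec_setup}) is used, so that the formal identification $\widehat{\cO_{X_K,(x_0)_K}}\cong\widehat{\cO_{\bA^1_K,0}}$ upgrades to a compatible analytic identification on a neighbourhood of $0$ in $D(0,1)$. I do not expect a real obstacle here: $y$ being a local analytic isomorphism near $x_0$ over $\C$, the composite $y\circ\phi$ is, near $0$, just a reparametrization of $\phi$, and the germ of $f$ is by construction the germ of $\phi^*m$ read in the $y$-coordinate. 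So the proof is genuinely ``the same as that of Lemma~\ref{inf}'' once this dictionary is written down, and it can be dispatched in a few lines.
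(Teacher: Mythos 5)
Your argument is correct and is exactly the route the paper intends: the paper's "proof" of this lemma is literally the remark that it is the same as that of Lemma \ref{inf}, and your write-up carries out that adaptation faithfully (pull back the horizontal section along $\phi$ to get a genuine holomorphic $\boldsymbol g$ on the simply connected disc, rescale so the derivative at $0$ is $1$, and read everything in the $y$-coordinate via the \'etale identification of formal neighbourhoods to produce the pair $(g,h)$ required by Definition \ref{def_r}). The only cosmetic slip is the stray reference to "$h^{-1}$" where you mean $h$ itself in the rescaled coordinate; the substance is fine.
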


Let $t_0=\frac{1+i}2$. A direct manipulation of the definition shows $\lambda(t_0)=-1$, where $\lambda$ is defined in \ref{lambda}.
Let $F:D(0,1)\rightarrow \C-(\Z+t_0\Z)$ be a uniformization map such that $F(0)=\frac 12$.
\begin{lemma}\label{Er}(Eremenko) 
The derivative $|F'(0)|=2^{-3/2}\pi^{-3/2}\Gamma(1/4)^2=0.8346..$.
\end{lemma}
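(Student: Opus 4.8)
The plan is to read $|F'(0)|$ off as a conformal invariant of the surface $\C-\Lambda$, where $\Lambda=\Z+t_0\Z=\Z+\tfrac{1+i}{2}\Z$, to match this invariant with the quantity computed by Eremenko in \cite{E}, and then to make the constant explicit using $\theta$-constants and the Chowla--Selberg formula, exactly as in the proof of Proposition \ref{Rinf}.

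First I would record that, $F$ being the universal covering map of the hyperbolic surface $\C-\Lambda$ with $F(0)=\tfrac12$, the number $|F'(0)|$ is (up to the standard normalizing constant relating it to the Poincar\'e density at $\tfrac12$) the conformal radius of $\C-\Lambda$ at $\tfrac12$. I would then make two simplifying observations. On one hand, $\C-\Lambda$ is the pullback of the once-punctured curve $X=E-\{O\}$ along $\C\to E=\C/\Lambda$, and $E$ has $j$-invariant $1728$ --- indeed $\lambda(t_0)=-1$, as already noted, so $E$ is isomorphic to $\{y^2=x(x-1)(x+1)\}$, and under $\C/\Lambda\cong E$ the class of $\tfrac12$ is the $2$-torsion point $x_0=(0,0)$ in that model (the one whose Weierstrass coordinate is the ``symmetric'' root). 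On the other hand, $\Lambda=\tfrac{1+i}{2}\,\Z[i]$ is homothetic to the Gaussian lattice: the map $w\mapsto\tfrac{1+i}{2}w$ is a conformal isomorphism $\C-\Z[i]\to\C-\Lambda$ sending $\tfrac{1-i}{2}$ to $\tfrac12$ and multiplying conformal radii by $|\tfrac{1+i}{2}|=2^{-1/2}$. Under the automorphisms of $\C-\Z[i]$ (translations by $\Z[i]$ and multiplication by $i$ about a fixed point) the half-period $\tfrac{1-i}{2}$ is equivalent to the ``corner'' half-period $\tfrac{1+i}{2}$ --- the one fixed, up to translation, by $w\mapsto iw$ --- which lies in a different orbit from $\{\tfrac12,\tfrac i2\}$. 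So the problem reduces to computing the conformal radius of $\C-\Z[i]$ at the corner half-period and then multiplying by $2^{-1/2}$.

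This last quantity is precisely what Eremenko's result \cite{E} supplies: the Poincar\'e density, equivalently the conformal radius, of the complement of the square lattice at the corner half-period, in closed form. (One geometric route to such a computation: dividing $X=E-\{O\}$ by its automorphism group presents it as a degree-$4$ orbifold cover of the $(2,4,\infty)$ triangle orbifold, whose uniformization is classical, the relevant derivative of the associated Schwarz triangle function being a ratio of $\Gamma$-values.) Having imported Eremenko's value, I would transport it back through the homothety and rewrite it in terms of the $\theta$-constants $\theta_{00}(i),\theta_{01}(i),\theta_{10}(i)$ of the Gaussian lattice; using $\theta_{01}(i)=\theta_{10}(i)$, $\theta_{00}(i)^4=2\,\theta_{01}(i)^4$, and Lemma \ref{lem_deri}(2), this reduces everything to the single special value $\theta_{00}(i)^2$. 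To evaluate it I would invoke, in place of the Chowla--Selberg formula for $\Q(\sqrt3 i)$ used in Proposition \ref{Rinf}, its analogue for $\Q(i)$ --- $|\eta(i)|^{4}=\tfrac{1}{8\pi}\big(\Gamma(1/4)/\Gamma(3/4)\big)^{2}$, equivalently $\eta(i)=\Gamma(1/4)/(2\pi^{3/4})$ --- which combined with $2^{8}\eta^{24}=(\theta_{00}\theta_{01}\theta_{10})^{8}$ (Lemma \ref{lem_deri}(1)) gives $\theta_{00}(i)^2=\Gamma(1/4)^2/(2\pi^{3/2})$; together with the reflection formula $\Gamma(1/4)\Gamma(3/4)=\pi\sqrt2$ this should collapse to $|F'(0)|=2^{-1/2}\theta_{00}(i)^2=2^{-3/2}\pi^{-3/2}\Gamma(1/4)^2=0.8346\ldots$, as claimed.

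The hard part is the input from Eremenko: unlike $\bP^1-\{0,1,\infty\}$, the once-punctured torus is not uniformized by a classical modular function, so producing the exact Poincar\'e density of $\C-\Z[i]$ at the corner half-period is the one genuinely nontrivial step, and it is what must be quoted from \cite{E}. The remaining work is bookkeeping that nonetheless needs care: matching Eremenko's normalization to the definition of $|F'(0)|$, checking that $\tfrac12$ falls in the corner-half-period orbit rather than the edge one (the two orbits give different constants), and chasing the constants through the $\theta$-constant identities, the Chowla--Selberg value for $\Q(i)$, and the reflection formula $\Gamma(1/4)\Gamma(3/4)=\pi\sqrt2$.
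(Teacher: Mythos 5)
Your proposal is correct and follows essentially the same route as the paper: the key analytic input is quoted from Eremenko, and the explicit constant is then extracted via the theta-constant relations at $i$ and the Chowla--Selberg value of $\eta(i)$ (the paper cites Eremenko's formula $F'(0)=\tfrac{2^{5/2}}{B(1/4,1/4)}\,|(\lambda^{-1})'(i)|$ directly for the lattice $\Z+\tfrac{1+i}{2}\Z$, so the homothety to $\Z[i]$ and the orbit identification are folded into that citation rather than carried out separately as you do). Your bookkeeping --- the factor $|\tfrac{1+i}{2}|=2^{-1/2}$, the identification of $\tfrac12$ with the deepest half-period of $\Z[i]$, and $\theta_{00}(i)^2=\Gamma(1/4)^2/(2\pi^{3/2})$ --- is consistent with the stated value $2^{-3/2}\pi^{-3/2}\Gamma(1/4)^2$.
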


\begin{proof}
From \cite{E}*{Sec. 2}, we have $F'(0)=\frac {2^{5/2}}{B(1/4,1/4)}|(\lambda^{-1})'(i)|$\footnote{The choice of $\lambda$ there is different. We have $\lambda(i)=2$ here.}, where $B$ is Beta function. By Lemma \ref{lem_deri}, the Chowla-Selberg formula (\cite{SC}) 
\begin{equation}\label{CS}
|\eta(i)|=2^{-1}\pi^{-3/4}\Gamma(1/4),
\end{equation}
and $\theta_{00}^4(i)=2\theta_{01}^4(i)=2\theta_{10}^4(i)$,
we have
$$|(\lambda^{-1})'(i)|=|\pi i (\frac {\theta_{01}(i)\theta_{10}(i)}{\theta_{00}(i)})^4|=\pi |\eta(i)|^4=\frac{\Gamma(1/4)^4}{2^4\pi^2}.$$
We obtain the desired formula by noticing that $B(1/4,1/4)=\pi^{-1/2}\Gamma(1/4)^2.$
\end{proof}

\begin{lemma}\label{unimap}
Let $\alpha$ be the constant $2(-\pi^2\theta_{01}^4(t_0))^{3/2}$ and $\wp$ be the Weierstrass-$\wp$ function. We have $y\circ \phi=\alpha^{-1}\wp'\circ F$, up to some rotation on $D(0,1)$. 
\end{lemma}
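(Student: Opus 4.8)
The plan is to identify both $y\circ\phi$ and $\alpha^{-1}\wp'\circ F$, via the two classical presentations of the universal cover of $X(\C)$ by the unit disc, with the pullback of the $y$-coordinate function, and then to conclude using the rigidity of universal coverings.

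First I would use the analytic description of $X(\C)$ already exploited in the proof of Lemma \ref{ksa}. Since $\lambda(t_0)=-1$, the projective curve $E:=X(\C)\cup\{\infty\}$ is the elliptic curve $\C/(\Z+t_0\Z)$, and the Weierstrass parametrization $z\mapsto(\wp(z),\wp'(z))$ followed by the coordinate change \eqref{isom} with $\lambda_0=-1$ is an isomorphism of Riemann surfaces $\Psi\colon\C/(\Z+t_0\Z)\xrightarrow{\ \sim\ }E$, restricting to $\Psi\colon(\C/(\Z+t_0\Z))-\{0\}\xrightarrow{\ \sim\ }X(\C)$. By the second equation of \eqref{isom}, the $y$-coordinate of $\Psi(z)$ equals $\wp'(z)/\alpha$ with $\alpha=2(-\pi^2\theta_{01}^4(t_0))^{3/2}$; that is, $y\circ\Psi=\alpha^{-1}\wp'$, and since $\wp'$ is $(\Z+t_0\Z)$-periodic this identity still makes sense after pulling back along the quotient map $\C-(\Z+t_0\Z)\to(\C/(\Z+t_0\Z))-\{0\}$.

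Next I would check that $\Psi$ sends $F(0)=\tfrac12$ to $x_0=(0,0)$. By the first equation of \eqref{isom}, the $x$-coordinate of $\Psi(z)$ is $(\wp(z)-e_1)/(-\pi^2\theta_{01}^4(t_0))$ with $e_1:=\tfrac13\pi^2(\theta_{00}^4(t_0)+\theta_{01}^4(t_0))$; since $\lambda(t_0)=-1$ forces $\theta_{00}^4(t_0)=-\theta_{01}^4(t_0)$, we get $e_1=0$, so $\Psi(z)$ has $x$-coordinate $0$ exactly when $\wp(z)=0$ and $y$-coordinate $0$ exactly when $\wp'(z)=0$. The unique nonzero $2$-torsion point of $\C/(\Z+t_0\Z)$ at which $\wp$ vanishes is the half-period $\tfrac12$ (the standard theta-constant evaluation $\wp(\tfrac12)=e_1$, with the conventions of \cite{I68}), so $\Psi(\tfrac12)=(0,0)$. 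This is precisely the normalization for which $F(0)=\tfrac12$ was chosen before Lemma \ref{Er}, and hence $\Psi(F(0))=x_0=\phi(0)$.

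Finally I would invoke uniqueness of universal covers. The composite $D(0,1)\xrightarrow{\,F\,}\C-(\Z+t_0\Z)\to(\C/(\Z+t_0\Z))-\{0\}\xrightarrow{\,\Psi\,}X(\C)$ is a composition of covering maps with simply connected source, hence is a universal covering of $X(\C)$ carrying $0$ to $x_0$; and $\phi$ is another universal covering of $X(\C)$ carrying $0$ to $x_0$. Therefore $\phi$ equals this composite precomposed with a biholomorphism $h$ of $D(0,1)$ fixing $0$, which by the Schwarz lemma is a rotation. Composing the identity of the first step with $h$ gives $y\circ\phi=(y\circ\Psi)\circ F\circ h=\alpha^{-1}(\wp'\circ F)\circ h$, which is the assertion of the lemma, the rotation being $h$. (The residual ambiguities --- the branch of the $\tfrac32$-power defining $\alpha$, and the rotations implicit in the choices of $\phi$ and $F$ --- are immaterial; in particular they do not affect $|(y\circ\phi)'(0)|$, which is all that the subsequent radius computation uses.) The step needing the most care is matching the theta-constant conventions so that $\tfrac12$, rather than one of the other two half-periods, is the point lying over $(0,0)$ --- equivalently, that the normalization $F(0)=\tfrac12$ is the right one; the covering-space part is soft, and $y\circ\Psi=\alpha^{-1}\wp'$ is read off directly from \eqref{isom}.
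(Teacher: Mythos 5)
Your proof is correct and follows essentially the same route as the paper: identify $s\circ g\circ F$ (your $\Psi\circ F$) and $\phi$ as two universal covers of $X(\C)$ sending $0$ to $x_0$, so they agree up to a rotation, and then read off $y\circ\Psi=\alpha^{-1}\wp'$ from \eqref{isom}. The only (harmless) divergence is in checking $\Psi(\tfrac12)=x_0$: you compute $e_1=\wp(\tfrac12)=\tfrac{\pi^2}{3}(\theta_{00}^4(t_0)+\theta_{01}^4(t_0))=0$ directly from the theta-constant identities, whereas the paper notes that both points are the unique point fixed by the order-four automorphism group of $X(\C)$.
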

\begin{proof}
The map $g\colonequals (\wp,\wp')$ maps $\C-(\Z+t_0\Z)$ to the affine curve $u^2=4v^3-g_2(t_0)v-g_3(t_0)$. Let $s$ be the isomorphism from this affine curve to $X(\C)$ given by (\ref{isom}). Since both $s\circ g(1/2)$ and $x_0$ are the unique point fixed by the four automorphisms of $X(\C)$, we have $s\circ g(1/2)=x_0$. Hence $s\circ g\circ F(0)=x_0=\phi(0)$ and then the uniformizations $s\circ g\circ F$ and $\phi$ are the same up to some rotation. 
We have $y\circ \phi=y\circ s\circ g\circ F=\alpha^{-1}\wp'\circ F$ by (\ref{isom}).
\end{proof}

\begin{proposition}\label{ec_Rinf}
The $\sigma$-adic radius $R_\sigma^{\frac{[K:\Q]}{[K_\sigma:\R]}}\geq2^{-5/2}\pi^{-2}\Gamma(1/4)^4=3.0949\cdots.$
\end{proposition}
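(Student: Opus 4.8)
The plan is to estimate $R_\sigma$ from below by the derivative of the uniformization at its center and then to evaluate that derivative in closed form. By the preceding lemma, $R_\sigma \ge |(y\circ\phi)'(0)|$, and by Lemma~\ref{unimap} we have $y\circ\phi = \alpha^{-1}\,\wp'\circ F$ up to a rotation of $D(0,1)$, which does not affect the absolute value of the derivative at $0$. So the chain rule gives $|(y\circ\phi)'(0)| = |\alpha|^{-1}\,|\wp''(F(0))|\,|F'(0)| = |\alpha|^{-1}\,|\wp''(\tfrac12)|\,|F'(0)|$, using $F(0)=\tfrac12$. It therefore remains to evaluate the three factors at $t_0=\tfrac{1+i}{2}$ and to simplify the product to the stated constant $2^{-5/2}\pi^{-2}\Gamma(1/4)^4$.

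First I would translate the identity $\lambda(t_0)=-1$ into relations among the theta constants at $t_0$: since $\lambda=\theta_{00}^4/\theta_{01}^4$ and $\theta_{00}^4=\theta_{01}^4+\theta_{10}^4$ (Lemma~\ref{lem_theta}), we get $\theta_{00}^4(t_0)=-\theta_{01}^4(t_0)$ and $\theta_{10}^4(t_0)=-2\theta_{01}^4(t_0)$. Substituting these into the three roots $\tfrac{\pi^2}{3}(\theta_{00}^4+\theta_{01}^4)$, $-\tfrac{\pi^2}{3}(\theta_{00}^4+\theta_{10}^4)$, $\tfrac{\pi^2}{3}(\theta_{10}^4-\theta_{01}^4)$ of $4v^3-g_2(t_0)v-g_3(t_0)$ recorded in the proof of Lemma~\ref{ksa} shows that these roots are $0$, $\pi^2\theta_{01}^4(t_0)$ and $-\pi^2\theta_{01}^4(t_0)$ (consistent with $g_3(t_0)=0$, as $j=1728$), and that the root attached to $x=0$ by the isomorphism \eqref{isom}, i.e.\ the value $\wp(\tfrac12)$, is $0$. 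From $\wp''=6\wp^2-\tfrac12 g_2$, equivalently $\wp''(\omega/2)=2(e_1-e_2)(e_1-e_3)$ at a half-period $\omega/2$ with $\wp(\omega/2)=e_1$, I then obtain $\wp''(\tfrac12)=-2\pi^4\theta_{01}^8(t_0)$, while $|\alpha|=|2(-\pi^2\theta_{01}^4(t_0))^{3/2}|=2\pi^3|\theta_{01}(t_0)|^6$; hence $|\alpha|^{-1}|\wp''(\tfrac12)| = \pi|\theta_{01}(t_0)|^2$ and $|(y\circ\phi)'(0)| = \pi\,|\theta_{01}(t_0)|^2\,|F'(0)|$.

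The remaining input is the two closed-form evaluations. For $|F'(0)|$ I quote Lemma~\ref{Er}: $|F'(0)|=2^{-3/2}\pi^{-3/2}\Gamma(1/4)^2$. For $|\theta_{01}(t_0)|$ I would combine $2^8\eta^{24}=(\theta_{00}\theta_{01}\theta_{10})^8$ (Lemma~\ref{lem_deri}(1)) with the theta relations at $t_0$ above to get $|\theta_{01}(t_0)|=2^{1/4}|\eta(t_0)|$, and then pass from $|\eta(t_0)|$ to $|\eta(i)|$ using the transformation laws $\eta(t+1)=e^{\pi i/12}\eta(t)$ and $\eta(-1/t)=\sqrt{-it}\,\eta(t)$ along the chain $t_0=TST(i)$ (with $T\colon t\mapsto t+1$, $S\colon t\mapsto -1/t$); only the automorphy factor $|\sqrt{-i(i+1)}|=|1-i|^{1/2}=2^{1/4}$ survives in absolute value, so $|\eta(t_0)|=2^{1/4}|\eta(i)|$. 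Plugging in the Chowla--Selberg value $|\eta(i)|=2^{-1}\pi^{-3/4}\Gamma(1/4)$ from \eqref{CS} yields $|\theta_{01}(t_0)|^2 = 2^{-1}\pi^{-3/2}\Gamma(1/4)^2$, and substituting this together with $|F'(0)|$ into $|(y\circ\phi)'(0)| = \pi|\theta_{01}(t_0)|^2|F'(0)|$ collapses the powers of $2$ and $\pi$ to $2^{-5/2}\pi^{-2}\Gamma(1/4)^4 = 3.0949\cdots$, which is the claimed lower bound for $R_\sigma^{[K:\Q]/[K_\sigma:\R]}$.

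I expect the only delicate points to be bookkeeping rather than ideas: keeping the powers of $2$ straight while shuttling between $\theta_{01}(t_0)$, $\eta(t_0)$ and $\eta(i)$ (in particular tracking the automorphy factor in the $\eta$-transformation along $t_0=TST(i)$ and the factor hidden in $\theta_{10}^4(t_0)=-2\theta_{01}^4(t_0)$), and confirming that it is the half-period $\tfrac12$ — and not $\tfrac{t_0}{2}$ or $\tfrac{1+t_0}{2}$ — at which $\wp$ vanishes, which is precisely what the identification $s\circ g(\tfrac12)=x_0$ from the proof of Lemma~\ref{unimap} supplies. Everything else is a direct application of a formula already available in the text or a routine computation with the explicit M\"obius chain $t_0=TST(i)$.
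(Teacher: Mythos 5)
Your proposal is correct and follows essentially the same route as the paper: reduce to $|(y\circ\phi)'(0)|=|\alpha|^{-1}|\wp''(\tfrac12)||F'(0)|$ via Lemma \ref{unimap}, show $\wp(\tfrac12)=0$ from $\lambda(t_0)=-1$, and combine the theta relations at $t_0$ with Lemma \ref{Er} and the Chowla--Selberg value \eqref{CS}. The only addition is that you explicitly justify $|\eta(t_0)|=2^{1/4}|\eta(i)|$ via the M\"obius chain $t_0=TST(i)$, a step the paper uses but leaves implicit; all the constants check out.
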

\begin{proof}
Differentiate both sides of $(\wp'(z))^2=4(\wp(z))^3-g_2(t_0)\wp(z)-g_3(t_0)$,
we have
$$\wp''(1/2)=6\wp(1/2)^2-g_2(t_0)/2=-g_2(t_0)/2,$$
where the second equality follows from that
$$\wp(1/2)=\pi^{2}(\theta_{00}^{4}(t_{0})+\theta_{01}^{4}(t_{0}))/3=\pi^{2}\theta_{01}^{4}(t_{0})(\lambda(t_0)+1)/3=0.$$
By Lemma \ref{lem_deri} and $\theta_{00}^4(t_0)=-\theta_{01}^4(t_0)=\theta_{10}^4(t_0)/2$, we have
$$|g_2(t_0)|=\frac {4\pi^4}{3}\cdot \frac 12 |\theta_{00}^8(t_0)+\theta_{01}^8(t_0)+\theta_{10}^8(t_0)|=4\pi^4|\theta_{01}^8(t_0)|.$$
Then by Lemma \ref{unimap} the absolute value of the derivative of $y\circ \phi$ at $0$ would be
\begin{equation}
\begin{split}
|\alpha^{-1}\wp''(1/2)\cdot F'(0)|&=2^{-1}\pi^{-3}|\theta_{01}(t_0)|^{-6}\cdot 2\pi^4|\theta_{01}(t_0)|^8\cdot |F'(0)|\\
&=\pi|\theta_{01}(t_0)|^2\cdot 2^{-3/2}\pi^{-3/2}\Gamma(1/4)^2\text{ (by Lemma \ref{Er})}\\
&=2\pi\cdot 2^{-2}\pi^{-3/2}\Gamma(1/4)^2\cdot 2^{-3/2}\pi^{-3/2}\Gamma(1/4)^2\\
&=2^{-5/2}\pi^{-2}\Gamma(1/4)^4=3.0949\cdots,
\end{split}
\end{equation}
where the third equality follows from
$$|\theta_{01}(t_0)|=2^{-1/12}|\theta_{00}(t_0)\theta_{01}(t_0)\theta_{10}(t_0)|^{1/24}=2^{1/4}|\eta(t_0)|=2^{1/2}|\eta(i)|,$$
and (\ref{CS}).
\end{proof}

\begin{proof}[Proof of Theorem \ref{thm_ec}]
By Proposition \ref{ec_Rinf}, we have $\prod_{v|\infty}R_v\geq 3.0949\cdots.$
By Definition \ref{ec_allp}, we have $\log(\prod_{v\nmid \infty} R_v)\geq -\sum_p\frac{\log p}{p(p-1)}=-0.761196\cdots.$ Hence $$\log(\prod_v R_v)\geq \log 3.0949\cdots-0.761196\cdots=0.3685\cdots>0.$$
We conclude by applying Theorem \ref{andre}. 
\end{proof}

\section{Examples}\label{ec_eg}
In this section, we first give an example of $(M,\nabla)$ with $p$-curvature vanishing for all $\fp$ but with nontrivial global monodromy over the affine elliptic curve in section \ref{ec}. Then we discuss a variant of our main theorems with $X$ being the affine line minus all $4$-th roots of unity. 

\subsection{An example with vanishing $p$-curvature for all $\fp$ and nontrivial $G_{\gal}$}\label{2isog}
Let $K$ be $\Q(\sqrt{-1})$, $X\subset \bA^2_{\Z}$ be the affine curve defined by $y^2=x(x-1)(x+1)$, $E$ be the elliptic curve defined as the compactification of $X_K$, and $f:E\rightarrow E$ be a degree two self isogeny of $E$. We will also use $f$ to denote the restriction of $f$ to $X_K\backslash \{P\}$, where $P$ is the non-identity element in the kernel of $f$. 

Let $(M,\nabla)$ be $f_*(\cO_{X_K\backslash \{P\}},d)$. By definition, $G_{\gal}$ is $\Z/2\Z$. 
\begin{proposition}
The $p$-curvature of $(M,\nabla)$ vanishes for all $\fp$. 
\end{proposition}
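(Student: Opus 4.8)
The plan is to show that $(M,\nabla) = f_*(\cO_{X_K\setminus\{P\}},d)$ has vanishing $p$-curvature for all $\fp$ by verifying the two conditions in Definition~\ref{ec_allp}. For condition~(1), note that $f$ is a finite \'etale map (away from the single point $P$), so after inverting finitely many primes it extends to a finite \'etale morphism of $\Z[1/N]$-schemes, and push-forward of $(\cO,d)$ along a finite \'etale map has vanishing $p$-curvature wherever everything is defined: the horizontal sections of $f_*(\cO,d)$ are, \'etale locally on the base, just the regular functions on the source, which are annihilated by $d$ and whose Frobenius twist is again regular. Concretely, the $p$-curvature of a push-forward along \'etale $f$ is the push-forward of the $p$-curvature of $(\cO,d)$, which is zero. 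So $\psi_p\equiv 0$ for all but finitely many $p$, which is in fact stronger than needed.

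For condition~(2), I would argue prime by prime at the finitely many bad places and, more cleanly, uniformly. Fix a finite place $v$ of $K$. The point $x_0=(0,0)$ is the $2$-torsion point lying under the kernel of $f$; since $f$ is a degree-two isogeny, the preimage $f^{-1}(x_0)$ consists of points that are $4$-torsion (or at least lie in the kernel of multiplication by $4$) on the source curve. The key input is that the formal horizontal sections of $f_*(\cO,d)$ near $x_0$, read off via the \'etale coordinate $y$, are governed by the behavior of the source curve over the disc $D(0, p^{-1/(p(p-1))})\subset\bA^1$ pulled back along $f$. I would use the isogeny $f$ explicitly (a V\'elu-type formula for the degree-two isogeny of $y^2=x(x-1)(x+1)$) to write the sections of $M$ as functions of $y$ and check that they are meromorphic on the required disc. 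The cleanest route is to observe that $f_*(\cO,d)$ is a subquotient of a connection that does have good reduction and whose $p$-curvature vanishes --- or to directly invoke that the covering $f$ is \'etale over an integral model away from $2$, so for $v\nmid 2$ condition~(2) follows from Lemma~\ref{ec_Rfin}, and for $v\mid 2$ one checks by hand (using the explicit isogeny) that the convergence radius of the formal sections is still at least $|2|_v^{1/(2\cdot 1)}=|2|_v^{1/2}=p^{-1/(p(p-1))}$ with $p=2$.

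The main obstacle is the place $v\mid 2$: there the isogeny $f$ may not extend to a finite \'etale map of integral models, so one cannot simply quote Lemma~\ref{ec_Rfin} and must instead analyze the $2$-adic geometry of the isogeny directly. The natural tool is the theory of the formal group: the self-isogeny $f$ induces a morphism of formal groups, and since $f$ is \'etale in characteristic $0$, the induced map on tangent spaces at the identity is an isomorphism after inverting nothing at all (the isogeny has degree $2$, but the point $x_0=(0,0)$ is \emph{not} in the kernel of $f$ on the source side --- rather its preimage is a reduced length-two scheme mapping isomorphically onto a disc), so the local analytic section $s_v$ of $y$ lifts through $f$ and the horizontal sections of $f_*(\cO,d)$ are honest analytic functions on $D(0,1)$, hence a fortiori meromorphic on the smaller disc $D(0,|2|_2^{1/2})$. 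I expect that once this local lifting is set up carefully, condition~(2) is immediate; the bookkeeping of which model of $X$ and $E$ to use, and checking $f^{-1}$ of the residue disc of $x_0$ is again a disjoint union of residue discs, is the only delicate point.
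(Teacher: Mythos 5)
Your treatment of condition (1) (the places $\fp\nmid 2$) is sound and close in spirit to the paper's: the paper pulls back along the \'etale isogeny $f$ (which extends to an \'etale cover over $\Z[\tfrac i2]$) and uses that $p$-curvature is insensitive to \'etale pullback and that $f^*f_*(\cO,d)$ is trivial; your pushforward formulation amounts to the same thing. The problem is your handling of $v\mid 2$, which is where all the substance lies. Your key claim --- that the preimage of the residue disc of $x_0$ under $f$ is a disjoint union of residue discs mapping isomorphically, so that the horizontal sections of $f_*(\cO,d)$ are analytic on all of $D(0,1)$ --- is false. The two preimages of $x_0=(0,0)$ are the $2$-torsion points $(1,0)$ and $(-1,0)$ on the source curve, and at a place above $2$ these are congruent modulo the maximal ideal, so they lie in the \emph{same} residue disc; the degree-two isogeny is genuinely $2$-adically contracting near $x_0$ (the explicit formula $t=-\tfrac i2(x-\tfrac 1x)$ carries a factor $\tfrac 12$). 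Consequently $x$, viewed as a formal function of the coordinate $s$ at $x_0$, converges only for $|s|_v<|2|_v^{1/2}$, not on the full unit disc. Your ``formal group / local lifting'' argument therefore does not close condition (2) at $v\mid 2$, and the sentence ``I expect that once this local lifting is set up carefully, condition (2) is immediate'' rests on this incorrect picture.

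You do name the correct target radius $|2|_v^{1/2}=p^{-1/(p(p-1))}$ for $p=2$ as a fallback, but you leave the verification as ``check by hand,'' and that verification is precisely the nontrivial content of the proof. The paper does it by exhibiting the explicit integral basis $\{1,x\}$ of $f_*\cO$ near $x_0$, computing $\nabla(1)=0$ and $\nabla(x)$ in terms of $(t,s)$, and observing that the connection matrix is congruent to the trivial one mod $2$, so the $2$-curvature vanishes in the strict sense (a remark then notes the equivalent convergence computation: $x$ as a series in $t$ converges for $|t|_v<|2|_v$, and $t$ as a series in $s$ converges for $|s|_v<|2|_v^{1/2}$ with image inside $|t|_v<|2|_v$). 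To repair your argument you must carry out one of these two computations; the qualitative \'etale-lifting reasoning cannot substitute for it, because the required bound is sharp --- the radius is exactly $|2|_v^{1/2}$, with no room to spare.
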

\begin{proof}
Notice that $f$ extends to a degree two \'etale cover from $E$ to $E$ over $\Z[\frac i2]$. Then
for $\fp\nmid 2$, the $p$-curvature of $(M,\nabla)$ coincides with that of $f^*(M,\nabla)$ by the fact that $p$-curvatures remain the same under \'etale pull back\footnote{Because $p\neq 2$ is unramified in $K$ and $(M,\nabla)$ has good reduction at $\fp$, the notion of $p$-curvature here is classical.}. Hence the $p$-curvature of $(M,\nabla)$ vanishes as $f^*(M,\nabla)$ is trivial.

For $\fp|2$, we write $(M,\nabla)$ out explicitly. Without loss of generality, we may assume that $f$ from the curve $y^2=x(x-1)(x+1)$ to the curve $s^2=t(t-1)(t+1)$ is given by $t=-\frac i2(x-\frac 1x)$ and $s=\frac{1+i}4\frac yx(x+\frac 1x)$. Locally around $(t,s)=(0,0)$, $1,x$ is an $\cO_{X_K}$ basis of $f_*\cO_{X_K}$ and this basis gives rise to a natural Zariski local extension of $(M,\nabla)$ over $X_{\cO_{\fp}}$. Direct calculation shows that 
$$\nabla(1)=0,\nabla(x)=\frac{2s}{(t^2-1)(3t^2-1)}ds+\frac{2st(1+2i)}{(t^2-1)(3t^2-1)}xds.$$ Therefore, $\nabla(f_1+f_2x) \equiv df_1+xdf_2\, (\mathrm{mod}\,2)$ and the $p$-curvature of $(M,\nabla)$ vanishes.
\end{proof}

\begin{rems}
In the above proof, we show that $(M,\nabla)$ has all $p$-curvatures vanishing in the strict sense: there is an extension of $(M,\nabla)$ over $X_{\cO_K}$ such that its $p$-curvatures are all vanishing. However, given the argument for $\fp\nmid 2$, in order to to apply Theorem \ref{thm_ec}, we do not need to construct an extension of $(M,\nabla)$ but only need to check that $x$, locally as a formal power series of $s$, converges on $D(0,2^{-1/2})$ for $v|2$. This is not hard to see: $x$, as a power series of $t$, converges when $|t|_v<|2|_v$; and $t$, as a power series of $s$,  converges when $|s|_v<|2|_v^{1/2}$ and the image of $|s|_v<|2|_v^{1/2}$ is contained in $|t|_v<|2|_v$.
\end{rems}

\subsection{A variant of the main theorems}
In this section, we will prove a variant of the main theorems when $X=\bA^1_{\Q}-\{\pm 1,\pm i\}$. Similar to Theorem \ref{thm_ec}, the conclusion is that $(M,\nabla)$ has finite monodromy and we give an example with nontrivial finite monodromy. 

To define the local convergence conditions for bad primes, we take $x_0=0$.

\begin{proposition}\label{5pts}
Let $(M,\nabla)$ be a vector bundle with connection over $X$ with $p$-curvature vanishes for all $\fp$. We further assume that the formal horizontal sections around $x_0$ converge over $D(x_0,1)$ for $v|15$. Then $(M,\nabla)$ is \'etale locally trivial.
\end{proposition}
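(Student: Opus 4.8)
The plan is to imitate the proofs of Proposition~\ref{10pts} and Theorem~\ref{thm_ec}. We apply Andr\'e's criterion (Theorem~\ref{andre}) to the components of the formal horizontal section $\boldsymbol f$ of $(M,\nabla)$ around $x_0=0$, viewed as formal power series in $\widehat{\cO}_{\bA^1_K,0}$. Fix one such component $f$. Since the $p$-curvatures vanish for almost all $\fp$, \cite{A05}*{Cor.~5.4.5} gives $\tau(f)=0$ and $\rho(f)<\infty$, so it remains to prove $\prod_v R_v>1$.

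For the finite places we run the argument of Lemma~\ref{lem_lmg} (cf.\ Lemma~\ref{ec_Rfin}). The derivation $\tfrac{d}{dx}$ is regular on all of $X$, and since $0^4-1\ne0$ in every residue field the point $x_0=0$ reduces to a $k_v$-point of $X$ whose residue disc is all of $\{|x|_v<1\}$; hence, when $(M,\nabla)$ has good reduction at $v\mid p$ and $\psi_p\equiv0$, the series $f$ is the germ of a meromorphic function on $D(0,p^{-\frac1{p(p-1)}})$. For the finitely many remaining primes this conclusion is exactly the content of ``the $p$-curvatures vanish for all $\fp$'' (the analogue of Definition~\ref{ec_allp}), and the extra hypothesis upgrades the radius to $1$ when $p\mid15$. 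Thus $\prod_{v\mid p}R_v\ge p^{-\frac1{p(p-1)}}$ for all $p$, with $\prod_{v\mid p}R_v\ge1$ for $p\in\{3,5\}$, and
\[
\sum_{v\nmid\infty}\log R_v\ \ge\ -\sum_{p\ne3,5}\frac{\log p}{p(p-1)}\ =\ -\bigl(0.7612\cdots-0.1831\cdots-0.0805\cdots\bigr)\ =\ -0.4976\cdots .
\]

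The essential new ingredient is the archimedean bound. Let $\Lambda_0=\Z+t_0\Z$ with $t_0=\tfrac{1+i}2$, and let $F\colon D(0,1)\to\C-\Lambda_0$, $F(0)=\tfrac12$, be Eremenko's uniformization of Lemma~\ref{Er}, so that $|F'(0)|=2^{-3/2}\pi^{-3/2}\Gamma(1/4)^2$. The observation that makes everything fit is the elementary identity
\[
\tfrac12(\mu_4+1)\ =\ \{\,0,\ 1,\ t_0,\ 1-t_0\,\}\ \subset\ \Lambda_0 ,
\]
which shows that $2F(z)-1$ never lies in $\mu_4$; hence $\psi\colon z\mapsto 2F(z)-1$ is a holomorphic map $D(0,1)\to\C-\mu_4=X(\C)$ with $\psi(0)=0$. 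By the analogue of Lemma~\ref{inf}, for each archimedean place $\sigma$ of $K$,
\[
R_\sigma^{\frac{[K:\Q]}{[K_\sigma:\R]}}\ \ge\ |\psi'(0)|\ =\ 2\,|F'(0)|\ =\ 2^{-1/2}\pi^{-3/2}\Gamma(1/4)^2\ =\ 1.669\cdots .
\]
Combining the two displays, $\log\prod_v R_v\ge\log(1.669\cdots)-0.4976\cdots=0.014\cdots>0$, so Theorem~\ref{andre} shows that $f$ is algebraic over $K(x)$. As in the proof of Theorem~\ref{thm_ec}, algebraicity of all components of $\boldsymbol f$, together with the finiteness of the local monodromies around the punctures (Katz, \cite{K70}*{Thm.~13.0}, using the vanishing of $\psi_p$ for almost all $p$), implies that $(M,\nabla)$ is \'etale locally trivial.

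The main obstacle is exactly this archimedean estimate: $X(\C)=\C-\mu_4$ is a five-punctured sphere with no evident modular description, so its uniformization centered at $x_0=0$ cannot be written down directly; the point is that a rescaled translate of Eremenko's uniformization of the complement of the $j=1728$ lattice $\Lambda_0$ maps into $X(\C)$ and carries its base point to $0$ -- this is why the proof ``relies on the result of Eremenko used in the last section''. The numerical margin is razor thin: without the hypothesis at $v\mid15$ one would need $\prod_{v\mid\infty}R_v>e^{0.7612\cdots}=2.14\cdots$, which $1.669\cdots$ does not reach, and this is precisely why the extra local condition at $3$ and $5$ is imposed.
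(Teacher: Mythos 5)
Your proof is correct and follows essentially the same route as the paper: Eremenko's constant $|F'(0)|$ from Lemma \ref{Er} gives the archimedean bound $R_\infty\geq 2\cdot 0.8346\cdots$, the hypotheses at finite places give $-\sum_{p\neq 3,5}\frac{\log p}{p(p-1)}=-0.4976\cdots$, and Andr\'e's criterion concludes. The paper states the factor of $2$ without explanation; your observation that $\tfrac12(\mu_4+1)\subset\Z+\tfrac{1+i}2\Z$, so that $z\mapsto 2F(z)-1$ maps $D(0,1)$ into $\C-\mu_4$ sending $0$ to $x_0$, is exactly the justification it leaves implicit.
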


\begin{proof}
By Lemma \ref{Er}, we have $R_\infty\geq 2\cdot 0.8346\cdots$. By the assumptions on finite places, we have $\log(\prod_{v\nmid \infty}R_v)\geq -\sum_{p\neq 3,5}\frac{\log p}{p(p-1)}=-0.4976\cdots$. Then we conclude by applying Theorem \ref{andre}.
\end{proof}

\begin{example}
Let $s$ be $(1-x^4)^{1/2}$. It is the solution of the differential equation $\frac{ds}{dx}=\frac{-2x^3}{1-x^4}$. Consider the connection on $\cO_X$ given by $\nabla(f)=df+\frac{2x^3}{1-x^4}dx$. It has $p$-curvature vanishing for all $p$: $\nabla(f)\equiv df$ (mod $2$) and $\nabla(f)\equiv df+(p+1)\frac{2x^3}{1-x^4}dx$ (mod $p$) with solution $s\equiv(1-x^4)^{(p+1)/2}$ (mod $p$) when $p\neq 2$. 
In conclusion, $(\cO_X,\nabla)$ satisfies the assumptions in the above proposition while it has nontrivial monodromy of order two.
\end{example}

\begin{rems}
If we replace our assumption by similar conditions on generic radii, the above example shows that one could have order two local monodromy around $\pm 1,\pm i$. The reason is \cite{BS}*{III eqn. (3)} does not hold in this situation and a modification of their argument would show that an order two local monodromy is possible.
\end{rems}

\begin{bibdiv}
\begin{biblist}
\bibselect{bibliography}

\end{biblist}
\end{bibdiv}

\end{document}